\newcommand{\vtx}[1]{*+[o][F-]{\scriptscriptstyle #1}} %the command to draw a vertex
\newcommand{\Rmnum}[1]{\expandafter\@slowromancap\romannumeral #1@}
\newcounter{num}[section] %
\newenvironment{theo}
{\refstepcounter{num}%
\bigskip\noindent{\bf Theorem~\arabic{section}.\arabic{num}. }\it}
\newenvironment{lemma}
{\refstepcounter{num}%
\bigskip\noindent{\bf Lemma~\arabic{section}.\arabic{num}. }\it}
\newenvironment{example}
{\refstepcounter{num}%
\bigskip\noindent{\bf Example~\arabic{section}.\arabic{num}.}}
\newenvironment{remark}
{\refstepcounter{num}%
\bigskip\noindent{\bf Remark~\arabic{section}.\arabic{num}.}}
\newcounter{thepic}
\newenvironment{eq}{\begin{equation}}{\end{equation}}
\newcommand{\si}{\sigma}
\newcommand{\al}{\alpha}
\newcommand{\be}{\beta}
\newcommand{\ga}{\gamma}
\newcommand{\la}{\lambda}
\newcommand{\de}{\delta}
\newcommand{\De}{\Delta}
\newcommand{\LA}{\langle}
\newcommand{\RA}{\rangle}
\newcommand{\ov}[1]{\overline{#1}}
\newcommand{\un}[1]{{\underline{#1}} }
\newcommand{\idmap}{{\rm id}} % the identical map
\newcommand{\tr}{\mathop{\rm tr}}
\newcommand{\alg}{{\rm alg}}
\newcommand{\mdeg}{\mathop{\rm mdeg}}
\newcommand{\diag}{\mathop{\rm diag}}
\newcommand{\Char}{\mathop{\rm char}}
\newcommand{\Ker}{{\mathop{\rm{Ker }}}}
\newcommand{\FF}{{\mathbb{F}}}   % base field
\newcommand{\NN}{{\mathbb{N}}}
\newcommand{\ZZ}{{\mathbb{Z}}}   % integers
\newcommand{\QQ}{{\mathbb{Q}}}
\newcommand{\Q}{\mathcal{Q}}    %Quiver Q
\newcommand{\algA}{\mathcal{A}}    %algebra A
\newcommand{\algF}{\mathcal{F}}    %algebra F
\newcommand{\X}{\LA X\RA}
\newcommand{\EX}{\LA \widetilde{X}\RA}
\newcommand{\C}{\mathcal{C}}     % the algebra of concomitants
\newcommand{\Tid}{{\rm T}} % it is letter "T" from "T-ideal"
\newcommand{\Tidskew}{T} % it is letter "T" from "{\it T-ideal}"
\newcommand{\AlgLarge}{\si\EX}  % large free algebra
\newcommand{\KLarge}[1]{\widetilde{K}_{#1}}  % ideal of relations in the large free algebra
\newcommand{\PhiLarge}[1]{\widetilde{\Phi}_{#1}}  % the map from $\AlgLarge$ to $R^{GL(n)}$ 
\newcommand{\TLarge}[1]{\widetilde{T}_{#1}}  % ideal of relations in the large free algebra of concomitants
\newcommand{\PsiLarge}[1]{\widetilde{\Psi}_{#1}}  % the map from the large free algebra of concomitants to $\C_n$ 
\newcommand{\piLarge}{\widetilde{\pi}}  % the map from the absolutely free algebra to $\AlgLarge$ 
\newcommand{\AlgSmall}[1]{\si\X_{#1}}  % small free algebra
\newcommand{\KSmall}[1]{K_{#1}}  % ideal of relations in the small free algebra
\newcommand{\PhiSmall}[1]{\Phi_{#1}}  % the map from $\AlgSmall{n}$ to $R^{GL(n)}$ 
\newcommand{\TSmall}[1]{T_{#1}}  % ideal of relations in the small free algebra of concomitants
\newcommand{\PsiSmall}[1]{\Psi_{#1}}  % the map from the small free algebra of concomitants to $\C_n$ 
\newcommand{\piSmall}[1]{\pi_{#1}}  % the map from the absolutely free algebra to $\AlgSmall$ 
\newcommand{\PhiAbs}[1]{\widehat{\Phi}_{#1}}  % the map from the absolutely free algebra $\si\X$ to $R^{GL(n)}$ 
\newcommand{\PsiAbs}[1]{\widehat{\Psi}_{#1}}  % the map from the absolutely free algebra $\si\X\otimes\FF\X^{\#}$ to $\C_n$ 
\newcommand{\TAbs}[1]{\widehat{T}_{#1}}  % kernel of $\PsiAbs$
\newcommand{\Y}{\LA Y\RA}
\newcommand{\EY}{\LA \widetilde{Y}\RA}
\newcommand{\CY}{\mathcal{C}'}     % the algebra of concomitants for O(n)-invariants
\newcommand{\AlgLargeY}{\si\EY}  % large free algebra
\newcommand{\KLargeY}[1]{\widetilde{K}'_{#1}}  % ideal of relations in the large free algebra
\newcommand{\PhiLargeY}[1]{\widetilde{\Phi}'_{#1}}  % the map from $\AlgLargeY$ to $R^{O(n)}$ 
\newcommand{\TLargeY}[1]{\widetilde{T}'_{#1}}  % ideal of relations in the large free algebra of concomitants
\newcommand{\PsiLargeY}[1]{\widetilde{\Psi}'_{#1}}  % the map from the large free algebra of concomitants to $\CY_n$ 
\newcommand{\piLargeY}{\widetilde{\pi}'}  % the map from the absolutely free algebra to $\AlgLargeY$ 
\newcommand{\AlgSmallY}[1]{\si\Y_{#1}}  % small free algebra
\newcommand{\KSmallY}[1]{K'_{#1}}  % ideal of relations in the small free algebra
\newcommand{\PhiSmallY}[1]{\Phi'_{#1}}  % the map from $\AlgSmallY{n}$ to $R^{O(n)}$ 
\newcommand{\TSmallY}[1]{T'_{#1}}  % ideal of relations in the small free algebra of concomitants
\newcommand{\piSmallY}[1]{\pi'_{#1}}  % the map from the absolutely free algebra to $\AlgSmallY$ 
\newcommand{\PhiAbsY}[1]{\widehat{\Phi}'_{#1}}   % the map from the absolutely free algebra $\si\Y$ to $R^{O(n)}$ 
\newcommand{\pplus}{}   % We have not decided yet how to denote the image of $f\in\si\X$ in $\AlgSmall{n}$: by $f^{+}$ or just by $f$. That is why we need $\pplus$!
\newcommand{\loopR}[3]{%
\begin{picture}(20,0)(#1,#2)
\put(-2,1){\llap{$\scriptstyle #3$}} \put(11,3){\circle{20}} \put(20,6){\vector(1,-4){1}}
\end{picture}}
\newcommand{\loopL}[3]{%
\begin{picture}(20,0)(#1,#2)
\put(22,1){$\scriptstyle #3$} \put(9,3){\circle{20}} \put(0,6){\vector(-1,-4){1}}
\end{picture}}
\begin{document}
\renewcommand{\refname}{References}
\thispagestyle{empty}

\title{Matrix identities with forms}%
\author{{Artem A. Lopatin}}%
\noindent\address{\noindent{}Artem A. Lopatin%
\newline\hphantom{iiii} Omsk Branch of
\newline\hphantom{iiii} Federal State Budgetary Establishment
\newline\hphantom{iiii} Sobolev Institute of Mathematics, SB RAS,
\newline\hphantom{iiii} Pevtsova street, 13,
\newline\hphantom{iiii} 644099, Omsk, Russia%
\newline\hphantom{iiii} http://www.iitam.omsk.net.ru/\~{}lopatin}%
\email{artem\underline{ }lopatin@yahoo.com}%

\vspace{1cm}
\maketitle {\small
\begin{quote}
\noindent{\sc Abstract. }  Consider the algebra $M_n(\FF)$ of $n\times n$ matrices over an infinite field $\FF$ of arbitrary characteristic.  An identity for $M_n(\FF)$ with forms is such a polynomial in $n\times n$ generic matrices and in $\si_k(x)$, $1\leq k\leq n$, coefficients in the characteristic polynomial of monomials in generic matrices, that is equal to zero matrix. This notion is a characteristic free analogue of identities for $M_n(\FF)$ with trace and it can be applied to the problem of investigation of identities for $M_n(\FF)$. In 1996 Zubkov established an infinite generating set for the $\Tid$-ideal $\TSmall{n}$ of identities for $M_n(\FF)$ with forms. Namely, for $t>n$ he introduced partial linearizations of $\si_t$ and proved that they together with the well-known free relations and the Cayley--Hamilton polynomial $\chi_n$ generate $\TSmall{n}$ as a $\Tid$-ideal. We show that it is enough to take partial linearizations of $\si_t$ for $n<t\leq 2n$. In particular, the $\Tid$-ideal $\TSmall{n}$ is finitely based. 

Working over a field of characteristic different from two, we obtain a similar result for the ideal $\TSmallY{n}$ of identities with forms for the $\FF$-algebra generated by $n\times n$ generic and transpose generic matrices. It follows from our previous papers that the $\Tid$-ideal $\TSmallY{n}$ is generated by partial linearizations of $\si_{t,r}$ for $t+2r>n$, the well-known free relations, $\chi_{t,r}$ for $t+2r=n$, and $\zeta_{t,r}$ for $t+2r=n-1$, where $\si_{t,r}$ is the identity introduced by Zubkov in 2005 and $\chi_{t,r}$, $\zeta_{t,r}$ are generalizations of the Cayley--Hamilton polynomial. We prove that  
it is enough to take partial linearizations of $\si_{t,r}$ for $n<t+2r\leq 2n$. In particular, the $\Tid$-ideal $\TSmallY{n}$ is finitely based.

These results imply that ideals of identities for the algebras of matrix $GL(n)$- and $O(n)$-invariants are generated by the well-known free relations together with partial linearizations of $\si_t$ for $n<t\leq 2n$ and partial linearizations of $\si_{t,r}$ for $n<t+2r\leq 2n$, respectively.
\medskip

\noindent{\bf Keywords: } polynomial identities, identities of matrices, identities of matrices with involution, invariant theory, polynomial invariants, classical linear groups.

\noindent{\bf 2010 MSC: } 16R10; 16R30; 13A50.
\end{quote}
}

%==========================================================================
%==========================================================================
%------Sec1--------------------------------------------------
\section{Introduction}\label{section1}

We work over an infinite field $\FF$ of arbitrary characteristic $p=\Char{\FF}\geq0$. All vector spaces, algebras, modules as well as tensor products are over $\FF$ and all algebras are associative with unity unless otherwise stated. All ideals are two-sided.

\subsection{Notations}

Given $n>1$ we consider $n\times n$ {\it generic} matrices $X_k=(x_{ij}(k))_{1\leq i,j\leq d}$ ($k\geq 1$) with entries from the following polynomial algebra
$$R=R_{n}=\FF[x_{ij}(k)\,|\,1\leq i,j\leq n,\, k\geq 1].$$
Denote coefficients in the characteristic polynomial
of an arbitrary $n\times n$ matrix $A$ by $\sigma_t(A)$, i.e., %
$$\det(\lambda E -A )=\sum_{t=0}^{n} (-1)^t\lambda^{n-t}\sigma_t(A).$$
So, $\sigma_0(A)=1$, $\sigma_1(A)=\tr(A)$ and $\sigma_n(A)=\det(A)$.
The algebra of {\it matrix $GL(n)$-invariants} $R^{GL(n)}$ is known to be generated by $\si_t(A)$, where $1\leq t\leq n$ and $A$ is a monomial in generic matrices. Moreover, we can assume that $A$ ranges over {\it primitive} monomials, i.e., $A\neq B^l$ for $l>1$ and a monomial $B$ in generic matrices. The mentioned generators of $R^{GL(n)}$ were found by Sibirskii~\cite{Sibirskii_1968} and Procesi~\cite{Procesi_1976} in characteristic zero case and by Donkin~\cite{Donkin_1992a} in the general case. The formal definition of $R^{GL(n)}$ together with some properties can be found, for example, in~\cite{DKZ_2002}. 

The algebra of {\it $n\times n$ matrices with forms} (or, the algebra of {\it concomitants}) 
$$\C_n=\alg_{\FF}\{X_1,X_2,\ldots,fE\}$$ 
is generated by generic matrices and $fE$, where $f$ ranges over $R^{GL(n)}$ and $E$ stands for the identity $n\times n$ matrix.  The ideal of identities for the algebra $M_n(\FF)$ of $n\times n$ matrices over $\FF$ coincides with the ideal of identities for $\alg_{\FF}\{X_1,X_2,\ldots\}\subset\C_n$. So a description of identities for $\C_n$ can be applied to the problem of investigation of identities for $M_n(\FF)$. Note that the identities for $M_n(\FF)$ are described only in the case of $n=2$ and $p\neq2$ (see~\cite{Razmyslov_1973},~\cite{Koshlukov_2001},~\cite{Koshlukov_2004}). In particular, it is shown that the T-ideal of identities for $M_n(\FF)$ is finitely based in the case of $n=2$ and $p\neq2$, but it is an open problem for $n=p=2$ as well as in the case of $n>2$ and $p>0$.

We define the following notions.
\begin{enumerate}
\item[$\bullet$] Let $\X$ be the semigroup (without unity) freely generated by {\it letters}  $x_1,x_2,\ldots$ and $\X^{\#}=\X\sqcup\{1\}$.

\item[$\bullet$] Let $\FF\X$ and $\FF\X^{\#}$ be the vector spaces with the bases $\X$ and $\X^{\#}$, respectively. Note that elements of $\FF\X$ and $\FF\X^{\#}$ are {\it finite} linear combinations of monomials from $\X$ and $\X^{\#}$, respectively.

\item[$\bullet$] Define a homomorphism of algebras $\phi_n:\FF\X^{\#}\to \alg_{\FF}\{E,X_1,X_2,\ldots\}$ by $1\to E$ and $x_k\to X_k$ for all $k\geq1$.
 \end{enumerate}

Consider a {\it free} algebra $\algF$ for $R^{GL(n)}$, i.e., $\algF$ is a free commutative $\FF$-algebra, equipped with a surjective homomorphism $\Phi_{\algF}:\algF\to R^{GL(n)}$, whose kernel is called the {\it ideal of relations} for $R^{GL(n)}$ with respect to $\algF$. Then the algebra $\algF\otimes \FF\X^{\#}$ is called a {\it free} algebra for $\C_n$ and the kernel of the surjective homomorphism 
$$\Psi_{\algF}:\algF\otimes \FF\X^{\#} \to \C_n,\qquad f\otimes b \to \Phi_{\algF}(f)\, \phi_n(b)$$
is the {\it ideal of relations} for $\C_n$ with respect to $\algF\otimes \FF\X^{\#}$. There are several ways to introduce a free algebra $\algF$ for $R^{GL(n)}$ and, consequently, for $\C_n$. Below we consider 
\begin{enumerate}
\item[$\bullet$] the {\it absolutely} free algebra $\si\X$ for $R^{GL(n)}$, 

\item[$\bullet$] the {\it large} free algebra $\AlgLarge$ for $R^{GL(n)}$ with the ideal of relations $\KLarge{n}$, 

\item[$\bullet$] the {\it small} free algebra $\AlgSmall{n}$ for $R^{GL(n)}$ with the ideal of relations $\KSmall{n}$,

\item[$\bullet$] the large and small free algebras $\AlgLarge \otimes \FF\X^{\#}$ and $\AlgSmall{n} \otimes \FF\X^{\#}$, respectively, for $\C_n$ with the ideals of relations $\TLarge{n}$ and $\TSmall{n}$, respectively.
\end{enumerate}

Our main results are the following ones:
\begin{enumerate}
\item[$\bullet$] the ideals of relations $\KSmall{n}$ and $\TSmall{n}$ are finitely based (see Theorem~\ref{theo_GL_main});

\item[$\bullet$] the ideals $\KLarge{n}$ and $\TLarge{n}$ are finitely based if and only if $p=0$ (see Lemma~\ref{lemma_GL_fb});

\item[$\bullet$] similar results are obtained in case $p\neq2$ for the ideal of identities with forms of the $\FF$-algebra generated by $n\times n$ generic and transpose generic matrices (see Theorem~\ref{theo_O_main} and Lemma~\ref{lemma_O_fb}).
\end{enumerate}  

Let us determine these free algebras.

\begin{enumerate}
\item[$\bullet$] Introduce the natural lexicographical linear order on $\X$ by setting $x_1>x_2>\cdots$ and $ab>a$ for $a,b\in\X$. (Note that we can actually consider any other lexicographical linear order).

\item[$\bullet$] Let $\AlgSmall{n}$ ($\si\X$, respectively) be a ring with unity of commutative polynomials over $\FF$ freely generated by ``symbolic'' elements $\si_t(a)$, where $1\leq t\leq n$ ($t\geq1$, respectively) and $a$ ranges over polynomials from $\FF\X$ with coefficient $1$ in the highest term with respect to the introduced lexicographical order on $\X$. Define 
$$\si_t(\al a)=\al^t\si_t(a)$$
for $\al\in\FF$ and denote $\si_0(a)=1$, $\tr(a)=\si_1(a)$. Note that $\si_t(0)=0$ and $\AlgSmall{n}\subset \si\X$.

\item[$\bullet$] We say that $a,b\in\X$ are {\it cyclic equivalent} and write $a\stackrel{c}{\sim} b$
if $a=a_1a_2$ and $b=a_2a_1$ for some $a_1,a_2\in\X^{\#}$.

\item[$\bullet$] Let $\EX\subset\X$ be a subset of maximal (with respect to the introduced lexicographical order on $\X$) representatives of $\stackrel{c}{\sim}$-equivalence classes of {\it primitive} elements, i.e., for $a\in\EX$ we have $a\neq b^l$ for all $b\in\X$ and $l>1$.

\item[$\bullet$] Assume that $\AlgLarge$ is a ring with unity of commutative polynomials over $\FF$ freely generated by ``symbolic'' elements $\si_t(a)$, where $t>0$ and $a\in\EX$. 
\end{enumerate}

There are the following maps between the defined free algebras. By Lemma~\ref{lemma_GL_Donkin}, we have the surjective homomorphism $\piLarge:\si\X\to\AlgLarge$. Define a surjective homomorphism $\piSmall{n}:\si\X\to\AlgSmall{n}$ by 
$$\piSmall{n}(\si_t(a))=\left\{
\begin{array}{rl}
\si_t(a),& 1\leq t\leq n\\
0,& t> n\\
\end{array}
\right..$$ 
Consider the surjective homomorphism 
$$\PhiAbs{n}:\si\X\to R^{GL(n)}$$ 
such that $\si_t(a) \to \si_t(\phi_n(a))$ for $1\leq t\leq n$ and $\si_t(a) \to 0$ for $t>n$, where $a\in\FF\X$. Since
$$\si_t(\al A)=\al^t\si_t(A)$$
holds for an arbitrary $n\times n$ matrix $A$ over a commutative $\FF$-algebra and $1\leq t\leq n$, the homomorphism $\PhiAbs{n}$ is well-defined. Similarly, we define surjective homomorphisms 
$$\PhiLarge{n}:\AlgLarge\to R^{GL(n)}\;\text{ and }\;\PhiSmall{n}:\AlgSmall{n}\to R^{GL(n)}.$$ 
Its kernels $\KLarge{n}$ and $\KSmall{n}$, respectively, are the ideals of relations for $R^{GL(n)}$ in the large and small free algebra, respectively. Then it is well-known that the following diagram is commutative. Namely, its left triangle is commutative by the definition and its right triangle is commutative by Remark~\ref{remark_GL_diagram} (see below). 
$$
\begin{picture}(0,120)
\put(0,95){%
\put(0,-2){\vector(0,-1){58}}%
\put(15,0){\vector(3,-2){35}}%
\put(-15,0){\vector(-3,-2){35}}%
\put(-11,5){$\si\X$}%
\put(-75,-33){$\AlgSmall{n}$}%
\put(50,-33){$\AlgLarge$}%
\put(50,-40){\vector(-3,-2){35}}%
\put(-50,-40){\vector(3,-2){35}}%
\put(110,0){\vector(-3,-2){35}}%
\put(-110,0){\vector(3,-2){35}}%
\put(-6,-75){$R^{GL(n)}$}%
\put(-125,5){$\KSmall{n}$}%
\put(115,5){$\KLarge{n}$}%
\put(3,-33){$\scriptstyle\PhiAbs{n}$}%
\put(-40,-8){$\scriptstyle\piSmall{n}$}%
\put(33,-8){$\scriptstyle\piLarge$}%
\put(-35,-48){$\scriptstyle\PhiSmall{n}$}%
\put(25,-48){$\scriptstyle\PhiLarge{n}$}%
\put(-20,-90){\text{Diagram 1.}}%
}%
\end{picture}
$$%
\noindent{}The homomorphisms $\PhiLarge{n}$ and $\PhiSmall{n}$ induce surjective homomorphisms 
$$\PsiLarge{n}=\PhiLarge{n}\otimes\phi_n:\AlgLarge \otimes \FF\X^{\#} \to \C_n\;\text{ and }\;
\PsiSmall{n}=\PhiSmall{n}\otimes\phi_n:\AlgSmall{n}\otimes \FF\X^{\#} \to \C_n,$$
respectively. Its kernels $\TLarge{n}$ and $\TSmall{n}$, respectively, are the ideals of relations for $\C_n$ in the corresponding free algebras. For short, we write $\si_t(a)b$ for $\si_t(a)\otimes b$. We can depict the introduced maps as follows: 
$$ 
\begin{picture}(0,120)
\put(0,95){%
\put(-100,-33){$\AlgSmall{n}\otimes \FF\X^{\#}$}%
\put(35,-33){$\AlgLarge\otimes \FF\X^{\#}$}%
\put(50,-40){\vector(-3,-2){35}}%
\put(-50,-40){\vector(3,-2){35}}%
\put(110,0){\vector(-3,-2){35}}%
\put(-110,0){\vector(3,-2){35}}%
\put(-6,-75){$\C_n$}%
\put(-125,5){$\TSmall{n}$}%
\put(115,5){$\TLarge{n}$}%
\put(-35,-48){$\scriptstyle\PsiSmall{n}$}%
\put(25,-48){$\scriptstyle\PsiLarge{n}$}%
\put(-20,-90){\text{Diagram 2.}}%
}%
\end{picture}
$$%

We say that an ideal $J$ of $\AlgSmall{n} \otimes \FF\X^{\#}$ is a {\it $\Tidskew$-ideal} if it is stable with respect every endomorphism $\varphi$ preserving $\si_t$, i.e., 
$$\varphi(\si_t(a)b)=\varphi(\si_t(a))\varphi(b)\text{ and }\varphi(\si_t(a))=\si_t(\varphi(a))$$
for all $a,b\in\FF\X$. These endomorphisms are determined by substitutions $x_k\to a_k$, where $a_k\in \FF\X$, $k>0$, and we call them {\it substitution} endomorphisms. A $\Tid$-ideal $J$ is {\it finitely based} if it is generated by a finite set $f_1,\ldots,f_s$ as $\Tid$-ideal, i.e., the ideal $J$ is generated by $\varphi(f_1),\ldots,\varphi(f_s)$, where $\varphi$ ranges over substitution endomorphisms.  Similarly, we define the notion of a $\Tid$-ideal for $\AlgLarge \otimes \FF\X^{\#}$, $\AlgLarge$, and $\AlgSmall{n}$. Obviously, $\KLarge{n}$, $\KSmall{n}$, $\TLarge{n}$, and $\TSmall{n}$ are $\Tid$-ideals.

\subsection{Results for $\C_n$}

In case $p=0$ Razmyslov~\cite{Razmyslov_1974} and Procesi~\cite{Procesi_1976} showed that the $\Tid$-ideal $\KLarge{n}$ is generated by a single identity. In particular, $\KSmall{n}$, $\TLarge{n}$, $\TSmall{n}$ are finitely based in characteristic zero case.  In case $p>n$ results of Samoilov~\cite{Samoilov_2007} imply that $\KSmall{n}$ and $\TSmall{n}$ are finitely based. In the case of arbitrary characteristic Zubkov~\cite{Zubkov_1996} described an infinite generating set for the $\Tid$-ideal $\KLarge{n}$ (see Theorem~\ref{theo_Zubkov}) and, therefore, for the ideals $\KSmall{n}$, $\TLarge{n}$, $\TSmall{n}$. 

In our main result we established a finite generating sets for the $\Tid$-ideals $\KSmall{n}$ and $\TSmall{n}$ (see Theorem~\ref{theo_GL_main} and Remark~\ref{remark_theo_GL_main}). In particular, $\KSmall{n}$ and $\TSmall{n}$ are finitely based. Necessary definitions are given in Section~\ref{section2}. To prove Theorem~\ref{theo_GL_main}, in Section~\ref{section3} we obtained an essentially smaller than in~\cite{Zubkov_1996} generating set for $\KLarge{n}$ (see Theorem~\ref{theo_GL} and Remark~\ref{remark_theo_GL}). We also showed that $\KLarge{n}$ and $\TLarge{n}$ are finitely based if and only if $p=0$ (see Lemma~\ref{lemma_GL_fb}). Applying Theorem~\ref{theo_GL}, we completed the proof of Theorem~\ref{theo_GL_main} in Section~\ref{section4}.

\subsection{Results for $\CY_n$}

Assume that $p\neq2$. In Section~\ref{section5} we consider identities with forms for the $\FF$-algebra generated by $n\times n$ generic and transpose generic matrices, or, equivalently, identities for the algebra $\CY_n$ generated $X_i$, $X_i^T$, $fE$, where $i>0$ and $f$ ranges over the algebra $R^{O(n)}$ of {\it matrix $O(n)$-invariants}. A description of identities for $\CY_n$ can be applied to the problem of investigation of identities with transpose involution for $M_n(\FF)$. Note that the identities with transpose involution for $M_n(\FF)$ are described only in the case of $n=2$ and $p\neq2$ (see~\cite{Koshlukov_2005}). 

Similarly to $\AlgLarge$ we introduce {\it large} free algebra $\AlgLargeY$ for $R^{O(n)}$ with the ideal of relations $\KLargeY{n}$. And similarly to $\AlgSmall{n}$ we introduce {\it small} free algebra $\AlgSmallY{n}$ for $R^{O(n)}$ with the ideal of relations $\KSmallY{n}$. Finally, similarly to $\TLarge{n}$ and $\TSmall{n}$ we define ideals of relations $\TLargeY{n}$ and $\TSmallY{n}$ for $\CY_n$ in the large and small free algebras $\AlgLargeY\otimes \FF\Y^{\#}$ and $\AlgSmallY{n}\otimes \FF\Y^{\#}$, respectively. 

In case $p=0$ Procesi~\cite{Procesi_1976} described a finite generating set for the $\Tid$-ideal $\KLargeY{n}$. In particular, $\KSmallY{n}$, $\TLargeY{n}$, $\TSmallY{n}$ are finitely based in characteristic zero case. In the case of arbitrary characteristic an infinite generating set for the $\Tid$-ideal $\KLargeY{n}$ was described in~\cite{Lopatin_free},~\cite{Lopatin_Orel} (see Theorem~\ref{theo_Lopatin}).

We established a finite generating sets for the $\Tid$-ideals $\KSmallY{n}$ and $\TSmallY{n}$ (see Theorem~\ref{theo_O_main}). In particular, $\KSmallY{n}$ and $\TSmallY{n}$ are finitely based. Necessary definitions are given in Section~\ref{section5}. To prove Theorem~\ref{theo_O_main}, in Section~\ref{section6} we obtained an essentially smaller than in~\cite{Lopatin_free} generating set for $\KLargeY{n}$ (see Theorem~\ref{theo_O} and Remark~\ref{remark_theo_O}). We also showed that $\KLargeY{n}$ and $\TLargeY{n}$ are finitely based if and only if $p=0$ (see Lemma~\ref{lemma_O_fb}). Applying Theorem~\ref{theo_O}, we completed the proof of Theorem~\ref{theo_O_main} in Section~\ref{section7}. Note that the proof of Theorem~\ref{theo_O_main} uses the same approach as the proof of Theorem~\ref{theo_GL_main}, but it is essentially more difficult. Namely, instead of core Lemmas~\ref{lemma_GL_sets}, \ref{lemma_GL_key} in case of Theorem~\ref{theo_GL_main} we need 
Lemmas~\ref{lemma_O_sets1}, \ref{lemma_O_key1}, \ref{lemma_O_sets2}, \ref{lemma_O_key2} to prove Theorem~\ref{theo_O_main}.

%------Rem1.1--------------------------------------------------
\begin{remark}
The notion algebra of matrix $GL(n)$-invariants $R_{n,d}^{GL(n)}$ from  papers~\cite{Donkin_1992a},  \cite{Procesi_1976}, \cite{Razmyslov_1974}, \cite{Sibirskii_1968}, \cite{Zubkov_1996} is slightly different from ours. Namely, the algebra $R_{n,d}^{GL(n)}$ from the mentioned papers is generated by $\si_t(A)$, where $1\leq t\leq n$ and $A$ is a monomial in $X_1,\ldots,X_d$. Since $R^{GL(n)}=\bigcup_{d>0} R_{n,d}^{GL(n)}$, part~2 of Theorem~\ref{theo_Zubkov} holds for $R^{GL(n)}$. Similar remark also holds for the algebra of matrix $O(n)$-invariants $R^{O(n)}$ from Section~\ref{section5}.
\end{remark}

%========================================================================================
%========================================================================================
%------Sec2--------------------------------------------------
\section{Relations}\label{section2}

Denote $\NN=\{1,2,\ldots\}$ and $\NN_0=\NN\sqcup \{0\}$. Given $\un{t}=(t_1,\ldots,t_u)\in\NN^u$, we write $|\un{t}|$ for $t_1+\cdots+t_u$ and $\#\un{t}$ for $u$. For short, we write $1^t$ for $(1,\ldots,1)$ ($t$ times). 

Let $\algA=\bigoplus_{k\in\NN_0} \algA_k$ be a graded algebra with $\algA_0=\FF$, $f,h,h_1,\ldots,h_r\in\algA$, and $J\vartriangleleft\algA$ be an ideal. We say that the relation $f=h$ belongs to the ideal $J$ (or, equivalently, holds modulo $J$) if $f-h\in J$. We also say that the relation $f=0$ follows from relations $h_1=0,\ldots,h_r=0$ if $f$ belongs to the ideal generated by $h_1,\ldots,h_r$.  The relation $f=0$ is said to belong to $J$ modulo relations $h_1=0,\ldots,h_r=0$ if $f$ belongs to the ideal generated by $J,h_1,\ldots,h_r$. If $f=\sum_{i=1}^r \al_i f_i h_i$, where $\al_i\in\FF$ and $f_i,h_i\in\algA$ are homogeneous elements of positive degree ($1\leq i\leq r$), then we write $f\equiv0$. If $f-\sum_{i=1}^r \al_i f_i h_i$ belongs to $J$, where $\al_i,f_i,h_i$ are the same as above, then we say that $f\equiv0$ holds modulo $J$.  

For $f=\si_t(a)\in\si\X$ with $a\in\X$ we set $\deg(f)=t\deg(a)$ and $\deg_x(f)=t\deg_x(a)$, where $x$ is a letter and $\deg_x(a)$ stands for a degree of the monomial $a$ in the letter $x$. In the same way we define a degree for elements of $\AlgLarge$ and $\AlgSmall{n}$.  Denote the multidegree of $a\in\X$ by $\mdeg(a)=(\de_1,\de_2,\ldots)$, where $\de_i=\deg_{x_i}(a)$. For short, we write $\mdeg(a)=(\de_1,\ldots,\de_d)$ in case $\de_i=0$ for all $i>d$.

We use notation
$\{\ldots\}_{m}$ for {\it multisets}, i.e., given an equivalence $=$ on a set $S$ and
$a_1,\ldots,a_r,b_1,\ldots,b_s\in S$, we write $\{a_1,\ldots,a_r\}_{m} = \{
b_1,\ldots,b_s\}_{m}$ if and only if $r=s$ and
$$\#\{1\leq j\leq r\,|\,a_j=a_i\}=\#\{1\leq j\leq r\,|\,b_j=a_i\}$$ %
for all $1\leq i\leq r$. We also refer to $\{a_1,\ldots,a_r\}_{m}$ as a {\it multisubset} of $S$.

Consider some relations for $\C_n$ and $R^{GL(n)}$. Given $\un{t}\in\NN_0^u$, we denote by $\Omega(\un{t})$ the set of multisets  
$$\omega=\{\underbrace{e_1,\ldots,e_1}_{k_1},\ldots,\underbrace{e_q,\ldots,e_q}_{k_q}\}_{m}
$$%
such that
\begin{enumerate}
\item[$\bullet$] $e_1,\ldots,e_q \in\EX$ are pairwise different and $k_1,\ldots,k_q\in\NN$ ($q>0$); 

\item[$\bullet$] $k_1\mdeg(e_1)+\cdots+k_q\mdeg(e_q)=\un{t}$.
\end{enumerate}
We set $\si(\omega)=(-1)^{k_1+\cdots+k_q} \si_{k_1}(e_1)\cdots\si_{k_q}(e_q)$. 
For $\un{x}=(x_1,\ldots,x_u)$ we define $\si_{\un{t}}(\un{x})\in \si\X$ as follows:
\begin{eq}\label{eq1}
\si_{\un{t}}(\un{x})= (-1)^{|\un{t}|}\!\!\! \sum_{\omega\in \Omega(\un{t})}  \si(\omega).
\end{eq}%
If $\Omega(\un{t})$ is empty, then we set $\si_{\un{t}}(\un{x})=1$. For $t>0$ denote 
$$F_t(\un{x})=\sum\si_{\un{t}}(\un{x}),$$
where the sum is taken over all $\un{t}\in\NN_0^u$ with $|\un{t}|=t$. For  $\un{a}=(a_1,\ldots,a_u)$ with $a_1,\ldots,a_u\in\FF\X$ we set that $\si_{\un{t}}(\un{a})$ and $F_t(\un{a})$ are the results of substitutions $x_1\to a_1,\ldots,x_u\to a_u$ in $\si_{\un{t}}(\un{x})$ and $F_t(\un{x})$, respectively. By Amitsur's formula~\cite{Amitsur_1980}, for $1\leq t\leq n$ we have that
\begin{eq}\label{eq_Amitsur}
\si_t(a_1+\cdots+a_u)=F_t(\un{a}) 
\end{eq}%
is a relation for $R^{GL(n)}$, i.e., belongs to the kernel of $\PhiAbs{n}$.

%------Ex2.1--------------------------------------------------
\begin{example}\label{ex_21} Taking the image of relation~(\ref{eq_Amitsur}) in $R^{GL(n)}$ we obtain that for an arbitrary $n\times n$ matrices $A,B$ over a commutative $\FF$-algebra the following equalities hold: 
\begin{enumerate}
\item[$\bullet$] $\si_2(A+B)=\si_2(A)+\si_2(B)+\tr(A)\tr(B)-\tr(AB)$,
\item[$\bullet$] $\si_3(A+B)=\si_3(A)+\si_3(B)+\si_2(A)\tr(B)-\tr(AB)\tr(A)+\tr(A^2B)$ 

$\qquad\qquad\qquad\qquad\qquad\quad$ $+\,\si_2(B)\tr(A)-\tr(AB)\tr(B)+\tr(B^2A)$.
\end{enumerate}
\end{example}
\bigskip

For $t\geq1$, $l\geq2$, and an $n\times n$ matrix $A$ over a commutative $\FF$-algebra we have the following well-known formula:
\begin{eq}\label{eq_P}
\si_t(A^l)=\sum\limits_{i_1,\ldots,i_{t l}\geq0}\be^{(t,l)}_{i_1,\ldots,i_{t l}}
    \si_1(A)^{i_1}\cdots\si_{t l}(A)^{i_{t l}},%
\end{eq}%
where we assume that $n\geq tl$ is large enough. 
Denote the right hand side of~(\ref{eq_P}) by $P_{t,l}(A)$. 
In~(\ref{eq_P}) coefficients $\be^{(t,l)}_{i_1,\ldots,i_{rl}} \in \ZZ_p\simeq \ZZ/p\ZZ$  
do not depend on $A$ and $n$.
If we take a diagonal matrix $A=\diag(\al_1,\ldots,\al_n)$, $\al_i\in\FF$, then $\si_t(A^l)$ is a symmetric polynomial in $\al_1,\ldots,\al_n$ and $\si_k(A)$ is the
$k^{\rm th}$ elementary symmetric polynomial in $\al_1,\ldots,\al_n$, where $1\leq k\leq n$. Thus the coefficients
$\be^{(t,l)}_{i_1,\ldots,i_{tl}}$ can easily be found. Some information about the polynomial $P_{t,l}(A)$ is given in Lemma~\ref{lemma_P} (see below).

%------Ex2.2----------------------------------------------------
\begin{example} We have the following partial cases of formula~(\ref{eq_P}):
\begin{enumerate}
\item[$\bullet$] $\tr(A^2)=\tr(A)^2-2\si_2(A)$,

\item[$\bullet$] $\tr(A^3) = \tr(A)^3 - 3\si_2(A)\tr(A) + 3\si_3(A)$,

\item[$\bullet$] $\tr(A^4) = \tr(A)^4 - 4\si_2(A)\tr(A)^2 + 2\si_2(A)^2 + 4\si_3(A)\tr(A) - 4\si_4(A)$,

\item[$\bullet$] $\si_2(A^2) = \si_2(A)^2 - 2\si_3(A)\tr(A) + 2\si_4(A)$.
\end{enumerate}
\end{example}

%------Rem2.3--------------------------------------------------
\begin{remark}\label{remark_GL_notations} Let $f\in\si\X$. Taking the image of $f$ with respect to $\piLarge$ ($\piSmall{n}$, respectively), we can consider $f$ as an element of $\AlgLarge$ ($\AlgSmall{n}$, respectively). As an example, let $f_k=\si_k(x+y)\in\si\X$ for $k\geq1$ and letters $x\neq y$. Then $f_2$ in $\AlgLarge$ is $\si_2(x)+\si_2(y)+\tr(x)\tr(y)-\tr(xy)$. On the other hand, $f_3$ in $\AlgSmall{n}$ is zero in case $n=2$ and $f_3$ in $\AlgSmall{n}$ is $\si_3(x+y)$ in case $n\geq 3$.
\end{remark}
\bigskip

The next remark follows from Lemma~\ref{lemma_GL_Donkin} (see below) and the definition of $\AlgLarge$.

%------Rem2.4--------------------------------------------------
\begin{remark}\label{remark_GL_OK1}
Assume that $\un{t}\in\NN^u$ and $t=|\un{t}|$. Then $\si_{\un{t}}(x_1,\ldots,x_u)\in\AlgLarge$ is a {\it partial linearization} of $\si_t(x_1)$, i.e., it is the coefficient of $\la_1^{t_1}\cdots \la_u^{t_u}$ in $\si_t(\la_1 x_1+\cdots+\la_u x_u)\in\AlgLarge$ considered as a polynomial in $\la_1,\ldots,\la_u\in\FF$.

Moreover, for $\un{k}\in\NN^l$ with $k=|\un{k}|$ we have that $\si_{\un{t},\un{k}}(x_1,\ldots,x_{u+l})\in\AlgLarge$ is a partial linearization of $\si_{(\un{t},k)}(x_1,\ldots,x_{u},x_{u+1})$.
\end{remark}
\bigskip

As we have mentioned in Section~\ref{section1}, we will usually omit $\otimes$ in the elements of $\si\X \otimes \FF\X^{\#}$. Given $a\in\FF\X$ and $t\geq0$, let  $\chi_t(a)\in\si\X \otimes \FF\X^{\#}$ be the Cayley--Hamilton polynomial, i.e, 
\begin{eq}\label{eq_chi}
 \chi_t(a)=\sum_{i=0}^t (-1)^i \si_i(a) a^{t-i}.
\end{eq}%
Note that $\chi_{0}(a)=1$. As in Remark~\ref{remark_GL_notations}, we can consider $\chi_{t}(a)$ as an element of $\AlgLarge\otimes \FF\X^{\#}$ as well as of $\AlgSmall{n}\otimes \FF\X^{\#}$. The Cayley--Hamilton theorem implies that 
$$\chi_n(a)=0$$
is a relation for $\C_n$, i.e., belongs to $\TLarge{n}$ and $\TSmall{n}$.  The proof of the following Theorem~\ref{theo_GL_main} and Remark~\ref{remark_theo_GL_main} is given in Section~\ref{section4}.

%------Th2.5--------------------------------------------------
\begin{theo}\label{theo_GL_main}
\begin{enumerate}
\item[1.] The ideal of relations $\TSmall{n}$ for $\C_n$ is generated by $\KSmall{n}\otimes 1$ and 
$\chi_n(a)=0$ for $a\in\FF\X$.  

\item[2.] The ideal of relations $\KSmall{n}$ for $R^{GL(n)}\simeq \AlgSmall{n} / \KSmall{n}$ is generated by 
\begin{enumerate}
\item[(a)] $\si_t(a+b)=F_t(a,b)$ for $1\leq t\leq n$, where $a,b\in\FF\X$;  

\item[(b)] $\si_t(a^l)=P^{\pplus}_{t,l}(a)$ for $1\leq t\leq n$, $1<l\leq n$, where $a\in\X$;

\item[(c)] $\si_t(ab)=\si_t(ba)$ for $1\leq t\leq n$, where $a,b\in\X$;

\item[(d)] $\si^{\pplus}_{\un{t}}(a_1,\ldots,a_u)=0$ for $n<|\un{t}|\leq 2n$, where $\un{t}\in\NN^u$, $u>1$, and $a_i\in\X$ for all $i$.%
 \end{enumerate}
 \end{enumerate}
In particular, ideals $\TSmall{n}$ and $\KSmall{n}$ are finitely based.
\end{theo}
\bigskip

Relations (a), (b), (c) from Theorem~\ref{theo_GL_main} are called {\it free} relations, because, being considered as elements of $\si\X$, they belong to the kernel of $\PhiAbs{n}$ for all $t\geq 1$, $l>1$ and do not depend on $n$.

%------Rem2.6--------------------------------------------------
\begin{remark}\label{remark_theo_GL_main}
In the formulation of Theorem~\ref{theo_GL_main} we can assume that $\un{t}\in\NN^u$ from relation~(d) satisfies the following conditions:
\begin{eq}\label{eq_cond1}
t_1\geq \cdots \geq t_u,
\end{eq}
\vspace{-0.5cm}%
\begin{eq}\label{eq_cond2}
t_1,\ldots,t_u\in\{1,p,p^2,p^3,\ldots\},
\end{eq}
\vspace{-0.5cm}%
\begin{eq}\label{eq_cond3}
\text{either } |\un{t}|=n+1, \text{ or } n+1<|\un{t}|\leq 2n \text{ and } |\un{t}|-\min\{t_i\}\leq n. %
\end{eq}%
In particular, if the second case from~(\ref{eq_cond3}) holds, then $t_i\neq 1$ for all $i$. These conditions enable us to diminish the number of multidegrees $\un{t}$ from~(d) considerably. Namely, it is not difficult to see that conditions~(\ref{eq_cond1}), (\ref{eq_cond2}), (\ref{eq_cond3}) imply that
\begin{enumerate}
\item[$\bullet$] if $p=0$ or $p>n$, then $\un{t}=1^{n+1}$;

\item[$\bullet$] if $\frac{n}{2}<p\leq n$, then $\un{t}$ belongs to the following list: $1^{n+1}$, $(p,1^{n+1-p})$, $(p,p)$;

\item[$\bullet$] if $\frac{n}{3}<p\leq \frac{n}{2}$ and $p\neq2$, then $\un{t}$ belongs to the following list: $1^{n+1}$, $(p,1^{n+1-p})$, $(p,p,1^{n+1-2p})$, $(p,p,p)$.
\end{enumerate}  
\end{remark}
\bigskip

Note that in the formulation of Theorem~\ref{theo_GL_main} we can not consider elements $\si_t(a)$ for $n<t\leq 2n$, $a\in\FF\X$ instead of relations (d), because images of these elements in $\AlgSmall{n}$ are zeros.

%==================================================================================
%==================================================================================
%------Sec3--------------------------------------------------
\section{Large free algebra of $GL(n)$-invariants}\label{section3}

We start this section with the known description of the ideal of relations $\KLarge{n}$.

%-----Th3.1---------------------------------------------------------
\begin{theo}(Zubkov~\cite{Zubkov_1996})\label{theo_Zubkov} 
\begin{enumerate}
\item[1.] The ideal of relations $\TLarge{n}$ for $\C_n$ is generated by $\KLarge{n}\otimes 1$ and $\chi_n(a)=0$ for $a\in\FF\X$.  

\item[2.] The ideal of relations $\KLarge{n}$ for $R^{GL(n)}\simeq \AlgLarge / \KLarge{n}$ is generated by $\si_t(a)=0$ for $t>n$ and $a\in\FF\X$.
\end{enumerate}
\end{theo} 
\bigskip

The next lemma describes the large free algebra $\AlgLarge$ as a quotient of the absolutely free algebra $\si\X$.

%------L3.2------------------------------------------------------------
\begin{lemma}\label{lemma_GL_Donkin}(Donkin~\cite{Donkin_1993a}) We have $\AlgLarge\simeq \si\X/ L$ for the ideal $L$ generated by
\begin{enumerate}
\item[(a)] $\si_t(a_1+\cdots+a_u)=F_{t}(a_1,\ldots,a_u)$, 

\item[(b)] $\si_t(a^l)=P_{t,l}(a)$,

\item[(c)] $\si_t(ab)=\si_t(ba)$, 
\end{enumerate}
where $t>0$, $l,u>1$, $a_1,\ldots,a_u\in \FF\X$, and $a,b\in\X$. 
\end{lemma}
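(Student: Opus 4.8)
The plan is to construct one explicit map and show it is bijective. Since $\AlgLarge$ is, by definition, the polynomial algebra freely generated by the symbols $\si_t(e)$ with $e\in\EX$ and $t>0$, and since each such symbol is also among the free generators of $\si\X$ (a monomial $e$ has leading coefficient $1$), there is a unique algebra homomorphism $j:\AlgLarge\to\si\X$ carrying each generator to the like-named generator of $\si\X$. Write $q:\si\X\to\si\X/L$ for the quotient map and set $\iota=q\circ j$. I would prove that $\iota$ is an isomorphism; granting this, $\piLarge:=\iota^{-1}\circ q$ is the asserted surjection $\si\X\to\AlgLarge$ and the lemma follows. Thus the whole argument splits into surjectivity and injectivity of $\iota$.

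Surjectivity of $\iota$ is a normal-form argument driven by the three defining relations of $L$. I would show, by induction on degree and, within a fixed degree, on the lexicographic order on $\X$, that every free generator $\si_t(a)$ of $\si\X$ is congruent modulo $L$ to a polynomial in the symbols $\si_k(e)$, $e\in\EX$. Expanding $a\in\FF\X$ as a linear combination of monomials, relation (a) together with $\si_t(\al a)=\al^t\si_t(a)$ reduces $\si_t(a)$ to a polynomial in the $\si_k$ of individual monomials; relation (c) then replaces each monomial by the lexicographically largest element of its $\stackrel{c}{\sim}$-class; and for an imprimitive monomial $b^l$, relation (b) rewrites $\si_k(b^l)$ as a polynomial in the $\si_j(b)$ with $\deg(b)<\deg(b^l)$, so the inductive hypothesis disposes of it. Termination is guaranteed by the grading, and the outcome is that the $\si_k(e)$, $e\in\EX$, generate $\si\X/L$, i.e.\ $\iota$ is onto.

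Injectivity is the heart of the matter. The key point is that relations (a), (b), (c) hold in every $R^{GL(n)}$: relation (a) is Amitsur's formula~(\ref{eq_Amitsur}) for $t\leq n$, while for $t>n$ both sides vanish in $R^{GL(n)}$ (the right-hand side being a sum of partial linearizations of $\si_t$, all zero since $\si_t=0$ on $n\times n$ matrices); relations (b) and (c) are the classical identity~(\ref{eq_P}) and the equality of the characteristic polynomials of $AB$ and $BA$. Therefore $L\subseteq\Ker\PhiAbs{n}$ for every $n$, so $\PhiAbs{n}$ factors through a homomorphism $\si\X/L\to R^{GL(n)}$, whence $\Ker\iota\subseteq\bigcap_{n}\Ker(\PhiAbs{n}\circ j)$. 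It thus suffices to prove that a nonzero polynomial $P$ in finitely many symbols $\si_t(e)$ satisfies $(\PhiAbs{n}\circ j)(P)\neq0$ for all large $n$; and for $n$ larger than every index $t$ occurring in $P$ this image is the genuine invariant obtained by the substitution $\si_t(e)\mapsto\si_t(\phi_n(e))$. I expect this last point to be the main obstacle, since it is exactly the algebraic independence, in the stable range $n\to\infty$, of the invariants $\si_t(\phi_n(e))$ attached to primitive words $e\in\EX$. I would establish it by a transcendence-degree count or by specializing the generic matrices (for instance to suitable generic diagonalizable matrices whose words have separating eigenvalues) so that the finitely many chosen $\si_t(\phi_n(e))$ become algebraically independent coordinates. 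Once this is in hand, $\Ker\iota=0$, $\iota$ is an isomorphism, and the proof is complete.
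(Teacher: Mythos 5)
The paper does not actually prove this lemma: it is quoted verbatim as Donkin's theorem with a citation to \cite{Donkin_1993a}, so there is no internal proof to compare against. Judged on its own, your architecture is the natural one and half of it works. The comparison map $\iota:\AlgLarge\to\si\X/L$ is well defined, and your surjectivity argument is correct: relation (a), together with the built-in rule $\si_t(\al a)=\al^t\si_t(a)$, reduces $\si_t$ of a general element of $\FF\X$ to $\si_k$ of single monomials; (c) moves each monomial to the maximal representative of its $\stackrel{c}{\sim}$-class; (b) handles imprimitive monomials by induction on the degree of the argument; and the process terminates. The reduction of injectivity to the statement that, for $n$ large, the finitely many elements $\si_t(\phi_n(e))$ with $e\in\EX$ involved in a given polynomial are algebraically independent in $R^{GL(n)}$ is also sound, since every generator of $L$ of degree at most $n$ is a genuine relation in $R^{GL(n)}$.

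The gap is the last step. That stable algebraic independence is not a routine verification to be dispatched by ``a transcendence-degree count or a specialization'': it is precisely the hard content of Donkin's theorem, so as written your proposal reduces the lemma to itself. Concretely, the specialization you suggest fails: if the generic matrices are specialized to simultaneously diagonalizable (e.g.\ diagonal) matrices, every word $w$ becomes diagonal with entries depending only on $\mdeg(w)$, so $\si_t(w)$ cannot separate distinct primitive words of equal multidegree, such as $x_1^2x_2x_1x_2^2$ and $x_1^2x_2^2x_1x_2$; and a non-commuting ``generic diagonalizable'' specialization that does separate them is exactly what needs to be constructed. A crude transcendence-degree count cannot work either: for a fixed degree bound the number of candidate generators is eventually constant in $n$, while the transcendence degree of $R^{GL(n)}$ grows like $n^2$, so counting gives no contradiction and one must show that these particular elements are independent. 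In characteristic zero this can be done by passing to the multilinear component (where linear independence of products of traces over cycle types is characteristic-free for $n\geq d$) and restituting; in positive characteristic restitution is unavailable, the generators are the $\si_t$ rather than traces, and the only known proof is Donkin's, via good filtrations for $GL_n$. Until that independence is actually established, the injectivity half, and hence the lemma, remains unproved.
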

\bigskip

In this section we prove the following theorem together with Remark~\ref{remark_theo_GL}.

%-----Th3.3---------------------------------------------------
\begin{theo}\label{theo_GL}
The ideal of relations $\KLarge{n}$ for $R^{GL(n)}\simeq \AlgLarge / \KLarge{n}$ is generated by 
\begin{enumerate}
\item[$\bullet$] $\si_{t}(a)=0$, where $n<t\leq 2n$ and $a\in\FF\X$;% 

\item[$\bullet$] $\si_t(b)=0$, where $t>2n$ and $b\in\EX$. 
\end{enumerate}
\end{theo}

%------Rem3.4--------------------------------------------------
\begin{remark}\label{remark_theo_GL}
We can reformulate Theorem~\ref{theo_GL} as follows: the ideal $\KLarge{n}$ is generated by 
\begin{enumerate}
\item[$\bullet$] $\si_{\un{t}}(a_1,\ldots,a_u)=0$, where $\un{t}\in\NN^u$ ($u>1$) satisfies conditions~(\ref{eq_cond1}), (\ref{eq_cond2}), (\ref{eq_cond3}) and $a_i\in\X$ for $1\leq i\leq u$; 

\item[$\bullet$] $\si_t(b)=0$, where $t>n$ and $b\in\EX$. 
\end{enumerate}
\end{remark}
\bigskip

We split the proof of Theorem~\ref{theo_GL} and Remark~\ref{remark_theo_GL} into several lemmas. Denote by $J_t$ the ideal of $\AlgLarge$ generated by $\si_t(a)$, $a\in\FF\X$. Since the field $\FF$ is infinite, Remark~\ref{remark_GL_OK1} implies that elements $\si_{\un{t}}(a_1,\ldots,a_u)$ generate the ideal $J_{t}$ for $t=|\un{t}|$, where $\un{t}\in\NN^u$ and $a_1,\ldots,a_u\in\X$. We write $J_t^{(p)}$ for the $\FF$-subspace of $\AlgLarge$ spanned by $\si_{\un{t}}(a_1,\ldots,a_u)$ for $\un{t}\in\NN^u$ satisfying $t=|\un{t}|$, $t_i\in\{1,p,p^2,\ldots\}$ and $a_i\in\X$ for all $i$. 

The key idea of the proof of Theorem~\ref{theo_GL} is the fact that $\si_{(k,t)}(a,b)\in J_t$ (see Lemma~\ref{lemma_GL_key}). This fact together with Lemma~\ref{lemma_GL_key2} enables us to show that $\si_{\un{t}}(a_1,\ldots,a_u)$, where $|\un{t}|>n$, $u>1$, $a_1,\ldots,a_u\in\X$, belongs to the ideal of $\AlgLarge$, generated by elements from Theorem~\ref{theo_GL}.

%------L3.5--------------------------------------------------
\begin{lemma}\label{lemma_P}
For a letter $x$ we have that 
\begin{enumerate} 
\item[1)] every summand of $P_{t,l}(x)\in\si\X$ contains a multiple $\si_k(x)$ with $k\geq t$; in particular, if $t>n$, then the image of every relation (b) from Lemma~\ref{lemma_GL_Donkin} with respect to $\piSmall{n}$ is zero; 

\item[2)] if $p>0$, $t=p^r$, $l=p^s$ for $r\geq0$, $s>0$, then $P_{t,l}(x)=\si_t(x)^l$.
\end{enumerate}
\end{lemma}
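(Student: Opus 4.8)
The plan is to test everything on diagonal matrices and thereby reduce both parts to elementary identities among symmetric functions. Fix $N\geq tl$ and put $A=\diag(\al_1,\ldots,\al_N)$; then the relation $\si_t(A^l)=P_{t,l}(A)$ becomes the symmetric-function identity $e_t(\al_1^l,\ldots,\al_N^l)=P_{t,l}$ evaluated at $\si_k(x)\mapsto e_k(\al_1,\ldots,\al_N)$, where $e_k$ is the $k$th elementary symmetric polynomial. Since $e_1,\ldots,e_N$ are algebraically independent over $\FF$, an identity among them lifts uniquely to an identity among the symbolic generators $\si_k(x)$ of $\si\X$; this correspondence is the bridge I would use in both parts to pass between symmetric functions and elements of $\si\X$.

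For part 1 I would write $P_{t,l}(x)=Q+R$, where $Q$ gathers the monomials involving only $\si_1(x),\ldots,\si_{t-1}(x)$ and $R$ gathers those containing at least one factor $\si_k(x)$ with $k\geq t$; the goal is $Q=0$. The key step is to specialize to a matrix of rank at most $t-1$, i.e. to set $\si_k(x)\mapsto e_k(\al_1,\ldots,\al_{t-1})$. Under this substitution $e_k=0$ for every $k\geq t$, so $R$ vanishes; at the same time the left-hand side $e_t(\al_1^l,\ldots,\al_{t-1}^l)=0$, because the $t$th elementary symmetric polynomial in only $t-1$ variables is zero. Hence $Q$ evaluated at $e_1,\ldots,e_{t-1}$ is identically zero, and since these $t-1$ polynomials are algebraically independent, $Q=0$. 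The ``in particular'' clause then follows at once: for $t>n$ every monomial of $P_{t,l}(x)$ carries a factor $\si_k(x)$ with $k\geq t>n$, which $\piSmall{n}$ sends to $0$, and since $\piSmall{n}(\si_t(a^l))=0$ as well, the image of relation (b) under $\piSmall{n}$ is zero.

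For part 2 I would use the Frobenius endomorphism. With $A=\diag(\al_1,\ldots,\al_N)$ and $l=p^s$ we have $\si_t(A^l)=e_t(\al_1^{p^s},\ldots,\al_N^{p^s})$, and because $e_t$ has coefficients $0,1$, hence lying in the prime field, the $s$-fold Frobenius gives $e_t(\al_1^{p^s},\ldots,\al_N^{p^s})=e_t(\al_1,\ldots,\al_N)^{p^s}=\si_t(A)^l$. Lifting back through algebraic independence of the $e_k$ yields $P_{t,l}(x)=\si_t(x)^l$ in $\si\X$. Here $s>0$ is exactly what guarantees $l\geq 2$ so that formula~(\ref{eq_P}) applies; the hypothesis $t=p^r$ is the only case used later but is not needed for this computation.

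The calculations are short, so the one point I would be careful about---and which I single out as the crux---is the legitimacy of the specialization argument in part 1: one must first kill the unwanted terms by lowering the number of variables to $t-1$ and only afterwards invoke algebraic independence of the elementary symmetric polynomials in that reduced variable set, rather than in the full set of $N$ variables.
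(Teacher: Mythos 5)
Your proof is correct. Part 2 coincides with the paper's argument: both reduce to diagonal matrices and apply the Frobenius identity $(\al_1+\cdots+\al_m)^{p}=\al_1^{p}+\cdots+\al_m^{p}$ to the elementary symmetric polynomial $e_t$, whose coefficients lie in the prime field; your observation that the hypothesis $t=p^r$ is not actually used here is also accurate. For part 1 the underlying idea is the same --- cut down to ``size'' $t-1$ and invoke algebraic independence --- but the execution differs. The paper sets $n=t-1$, cites part 2 of Theorem~\ref{theo_Zubkov} together with Lemma~\ref{lemma_GL_Donkin} to place the offending part $Q$ of $P_{t,l}(x)$ inside $\KLarge{n}$, and then applies $\PhiLarge{n}$ to contradict the algebraic independence of $\si_1(X),\ldots,\si_{t-1}(X)$ for a generic $(t-1)\times(t-1)$ matrix. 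You instead specialize the defining symmetric-function identity directly, setting $\al_t=\cdots=\al_N=0$ so that both the left-hand side $e_t(\al_1^l,\ldots,\al_{t-1}^l)$ and every monomial containing a factor $e_k$ with $k\geq t$ vanish, and then use the algebraic independence of $e_1,\ldots,e_{t-1}$ in $t-1$ variables. Your route is more elementary and self-contained: it needs nothing beyond the definition of $P_{t,l}$ via formula~(\ref{eq_P}), whereas the paper leans on the already-imported Zubkov machinery (which makes its write-up shorter in context). Since the algebraic independence of the $\si_i(X)$ for a generic matrix is itself standardly proved by exactly your diagonal specialization, the two proofs are morally the same fact unwound to different depths. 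The point you flag as the crux --- that the unwanted terms must be killed first and independence invoked only in the reduced variable set --- is indeed the step that needs care, and you handle it correctly; the degenerate case $t=1$ (which the paper sets aside with ``we can assume $t>1$'') also goes through in your setup since $Q$ is then a constant forced to vanish.
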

\begin{proof}
1) We can assume that $t>1$. Let $P_{t,l}(x)=P+Q$ in $\si\X$, where $P,Q$ are polynomials in $\si_i(x)$, $i>0$, and every summand of $P$ ($Q$, respectively) contains $\si_k(x)$ for some $k\geq t$ (does not contain $\si_k(x)$ for any $k\geq t$, respectively). Let $Q$ be a non-zero polynomial. We set $n=t-1$. By part~2 of Theorem~\ref{theo_Zubkov} and Lemma~\ref{lemma_GL_Donkin}, $\piLarge(Q)$ lies in $\KLarge{n}$. Acting by $\PhiLarge{n}$, we obtain a non-trivial relation between $\tr(X),\si_2(X),\ldots, \si_n(X)$, where $X$ is the generic $n\times n$ matrix corresponding to the letter $x$. But it is well-known that these elements are algebraically independent over $\FF$; a contradiction.  

\medskip
2) It follows from $(\al_1+\cdots+\al_m)^p=\al_1^p+\cdots+\al_m^p$ for $\al_1,\ldots,\al_m\in\FF$, $m>0$  and the reasoning after formula~(\ref{eq_P}).
\end{proof}

%------Rem3.6------------------------------------------------------------
\begin{remark}\label{remark_GL_diagram} The right triangle of Diagram~1 is commutative. To show this we use definitions of $\PhiAbs{n}$ and $\PhiLarge{n}$ together with the claim that $\PhiAbs{n}$ sends relations~(a), (b), (c) of Lemma~\ref{lemma_GL_Donkin} to zero. In case $t>n$ this claim follows from part~2 of Theorem~\ref{theo_Zubkov} and part~1 of Lemma~\ref{lemma_P}, and in case $1\leq t\leq n$ see Section~\ref{section2}. 
\end{remark}
\bigskip

%------L3.7--------------------------------------------------
\begin{lemma}\label{lemma_GL_OK2}
Assume that $\un{t}\in\NN^u$ and $t=|\un{t}|$. Then $t_1\!!\si_{\un{t}}(x_1,\ldots,x_u)=\si_{\un{t}'}(\underbrace{x_1,\ldots,x_1}_{t_1},x_2,\ldots,x_u)$ in $\AlgLarge$, where $\un{t}'$ stands for $(1^{t_1}\!,t_2,\ldots,t_u)$.
\end{lemma}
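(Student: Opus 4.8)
\emph{Approach.} The plan is to work throughout in $\AlgLarge$ and to exploit the partial-linearization description of $\si_{\un{t}}$ recorded in Remark~\ref{remark_GL_OK1}, which turns the claim into an elementary coefficient extraction. By that remark, expanding $\si_t(\la_1 x_1+\cdots+\la_u x_u)\in\AlgLarge$ as a polynomial in the $\la_i$ gives
\[
\si_t(\la_1 x_1+\cdots+\la_u x_u)=\sum_{\un{s}}\si_{\un{s}}(x_1,\ldots,x_u)\,\la_1^{s_1}\cdots\la_u^{s_u},
\]
where $\un{s}$ ranges over $\NN_0^u$ with $|\un{s}|=t$ and the coefficient of $\la_1^{t_1}\cdots\la_u^{t_u}$ is exactly $\si_{\un{t}}(x_1,\ldots,x_u)$.

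\emph{Key steps.} First I would substitute the linear form $\la_1\to\mu_1+\cdots+\mu_{t_1}$ into this identity, leaving $\la_2,\ldots,\la_u$ unchanged; this is a legitimate substitution in a polynomial identity over $\AlgLarge$. Next I would extract the coefficient of $\mu_1\mu_2\cdots\mu_{t_1}\,\la_2^{t_2}\cdots\la_u^{t_u}$ from both sides. On the left-hand side this coefficient is, by definition, the partial linearization $\si_{\un{t}'}(x_1,\ldots,x_1,x_2,\ldots,x_u)$ with $\un{t}'=(1^{t_1},t_2,\ldots,t_u)$, since one takes degree one in each of the $t_1$ copies of $x_1$ and degree $t_i$ in $x_i$. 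On the right-hand side, matching the powers $\la_2^{t_2}\cdots\la_u^{t_u}$ forces $s_i=t_i$ for all $i\geq2$, whence $|\un{s}|=t$ forces $s_1=t_1$, so only the term $\un{s}=\un{t}$ survives; the remaining factor is the coefficient of $\mu_1\cdots\mu_{t_1}$ in $(\mu_1+\cdots+\mu_{t_1})^{t_1}$, which by the multinomial theorem equals $\binom{t_1}{1,\ldots,1}=t_1!$. Equating the two sides yields $\si_{\un{t}'}(x_1,\ldots,x_1,x_2,\ldots,x_u)=t_1!\,\si_{\un{t}}(x_1,\ldots,x_u)$, as required.

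\emph{Delicate points and main obstacle.} The two points I would spell out carefully are the following. First, that after the substitution $\la_1\to\mu_1+\cdots+\mu_{t_1}$ the left-hand coefficient is genuinely $\si_{\un{t}'}$ evaluated at $(x_1,\ldots,x_1,x_2,\ldots,x_u)$: this is precisely the transitivity of partial linearization recorded in the second part of Remark~\ref{remark_GL_OK1}, namely that a partial linearization of a partial linearization is again a partial linearization of the original, applied here to the $x_1$-slot. Second, that no spurious terms contribute, which is the power-matching argument forcing $\un{s}=\un{t}$. I do not anticipate a genuine obstacle; the only care needed is the combinatorial bookkeeping guaranteeing that the surviving coefficient is precisely $t_1!$ and not some other factorial, together with the observation that the identity is valid over $\FF$ of arbitrary characteristic, with $t_1!$ understood as its image in $\FF$ (so that in small characteristic the statement may read $\si_{\un{t}'}=0$).
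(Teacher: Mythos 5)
Your proof is correct and follows essentially the same route as the paper: both arguments identify $\si_{\un{t}'}(x_1,\ldots,x_1,x_2,\ldots,x_u)$ as a coefficient extraction via Remark~\ref{remark_GL_OK1} and reduce the computation to the coefficient of $\mu_1\cdots\mu_{t_1}$ in $(\mu_1+\cdots+\mu_{t_1})^{t_1}$, which is $t_1!$. The paper is marginally more direct, using the homogeneity of $\si_{\un{t}}$ of degree $t_1$ in its first slot rather than expanding $\si_t$ of the full linear combination and matching powers in $\la_2,\ldots,\la_u$, but the content is identical, including your (correct) remark that $t_1!$ is to be read in $\FF$.
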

\begin{proof} We work in $\AlgLarge$.
 Remark~\ref{remark_GL_OK1} implies that $\si_{\un{t}'}(\underbrace{x_1,\ldots,x_1}_{t_1},x_2,\ldots,x_u)$ is equal to the coefficient of $\la_1\cdots \la_{t_1}$ in $\si_{\un{t}}(\la_1 x_1+\cdots+\la_{t_1} x_1, x_2, \ldots, x_u) = (\la_1+\cdots+\la_t)^{t_1} \si_{\un{t}}(x_1,\ldots,x_u)$, where $\la_1,\ldots,\la_{t_1}\in\FF$. The required is proven.
\end{proof}

%------L3.8--------------------------------------------------
\begin{lemma}\label{lemma_GL_sets}
Given pairwise different letters $x_0,x,e_1,e_2,\ldots$, consider an endomorphism  $\varphi$ of $\X$ defined by
$$\varphi(a)=\left\{
\begin{array}{rl}
x_0^i x,& \text{ if }a=e_i\\
a,& \text{ otherwise }\\
\end{array}
\right.$$
for any letter $a$. Let $\Theta_{I}\subset\EX$ ($\Theta_{II}\subset\EX$, respectively) be the set of all monomials in $x_0,x$ (in $x,e_1,e_2,\ldots$, respectively). Then $\varphi$ induces the well-defined bijection $\ov{\varphi}:\ov{\Theta}_{II} \sqcup\{\ov{x_0}\}\to \ov{\Theta}_I$ of sets of $\stackrel{c}{\sim}$-equivalence classes.
\end{lemma}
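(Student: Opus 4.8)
The plan is to exploit the uniform block structure of $\varphi$. I would first record that $\varphi(x)=x$ and $\varphi(e_i)=x_0^i x$, so the image of each letter from $\{x,e_1,e_2,\ldots\}$ is a word in $x_0,x$ containing exactly one occurrence of $x$, located at its right end. Hence for a word $w=b_1\cdots b_m$ in these letters the image $\varphi(w)=\varphi(b_1)\cdots\varphi(b_m)$ is a word in $x_0,x$ that ends in $x$, and the occurrences of $x$ in $\varphi(w)$ are exactly the right ends of the blocks $\varphi(b_1),\ldots,\varphi(b_m)$. Reading off the number of $x_0$'s preceding each successive $x$ recovers $w$, so $\varphi$ restricts to a bijection between the words in $x,e_1,e_2,\ldots$ and the words in $x_0,x$ ending in $x$.

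Next I would verify that $\varphi$ both preserves and reflects primitivity. Being an injective semigroup homomorphism, $\varphi(u^l)=\varphi(u)^l$; conversely, if $\varphi(w)=v^l$ for a word $v$ in $x_0,x$, then $v$ shares its last letter with $\varphi(w)$ and so ends in $x$, whence $v=\varphi(u)$ for a unique $u$ and $w=u^l$ by injectivity. Thus $w$ is primitive if and only if $\varphi(w)$ is.

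The core of the argument is the interaction of $\varphi$ with cyclic equivalence. I would prove that the cyclic rotations of $\varphi(w)$ ending in $x$ are precisely the words $\varphi(w')$ with $w'$ ranging over the cyclic rotations of $w$: a rotation of $\varphi(w)$ ends in $x$ exactly when it is cut at an occurrence of $x$, and by the first paragraph these occurrences are exactly the block boundaries, so cutting there produces $\varphi$ of a rotation of $w$. This yields at once that $[w]\mapsto[\varphi(w)]$ is well defined on cyclic classes (a one-letter rotation of $w$ becomes a one-block rotation of $\varphi(w)$, hence $\stackrel{c}{\sim}\varphi(w)$) and injective (if $\varphi(w)\stackrel{c}{\sim}\varphi(w')$ with both ending in $x$, then $\varphi(w')$ is a rotation of $\varphi(w)$ ending in $x$, so $\varphi$ of a rotation of $w$, and injectivity of $\varphi$ forces $w\stackrel{c}{\sim}w'$). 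Together with the primitivity step, $\varphi$ induces an injection from the cyclic classes of primitive words in $x,e_1,\ldots$ into the cyclic classes of primitive words in $x_0,x$.

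Finally I would identify the image and match it to $\Theta_I,\Theta_{II}\subset\EX$. Every nonempty $w$ has $\varphi(w)$ containing an $x$, and conversely any word in $x_0,x$ containing an $x$ is cyclically equivalent to one ending in $x$, hence lies in the image of $\varphi$; so the classes hit are exactly those of primitive words in $x_0,x$ that contain an $x$. The only primitive word in $x_0,x$ with no $x$ is $x_0$ (as $x_0^k$ is a proper power for $k>1$), whose class is $\ov{x_0}$ and which is disjoint from all images; adjoining $\ov{x_0}\mapsto\ov{x_0}$ thus produces a bijection onto every class of a primitive word in $x_0,x$. To phrase this in terms of $\EX$, I would note that the lexicographically maximal representative of such a class again lies in $x_0,x$ (respectively in $x,e_1,\ldots$), so $\ov{\Theta}_I$ and $\ov{\Theta}_{II}$ are exactly these two families of cyclic classes, giving the asserted bijection. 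The main obstacle is the cyclic-equivalence step: everything rests on the fact that the occurrences of $x$ in $\varphi(w)$ mark precisely the block boundaries, which is what forces rotations to respect the decomposition and makes $\ov{\varphi}$ simultaneously well defined and injective.
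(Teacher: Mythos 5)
Your proof is correct and follows essentially the same route as the paper's: the paper writes each class of $\Theta_{II}$ in the cyclic normal form $e_{i_1}x^{j_1}\cdots e_{i_s}x^{j_s}$ and maps it to $x_0^{i_1}x^{j_1+1}\cdots x_0^{i_s}x^{j_s+1}$, which is exactly your block decomposition read off at the occurrences of $x$ (equivalently, at the maximal $x_0$-runs). You merely spell out the injectivity, surjectivity, and primitivity checks that the paper leaves implicit, together with the same closing fact that primitivity is invariant under $\stackrel{c}{\sim}$.
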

\begin{proof}
For $a\in\Theta_{II}$ we have $a=x$ or $a\stackrel{c}{\sim} e_{i_1}x^{j_1}\cdots e_{i_s}x^{j_s}$ for some $i_1,\ldots,i_s>0$ and $j_1,\ldots,j_s\geq0$. Then $\ov{\varphi}(\ov{a})=\ov{b}$ for $b=x$ or $b\stackrel{c}{\sim} x_0^{i_1}x^{j_1+1}\cdots x_0^{i_s}x^{j_s+1}$, respectively. The following fact completes the proof: if $c_1,c_2\in\X$, $c_1$ is primitive, and $c_1\stackrel{c}{\sim} c_2$, then $c_2$ is also primitive. 
\end{proof}

%------L3.9--------------------------------------------------
\begin{lemma}\label{lemma_GL_key}
If $\un{t}\in\NN^u$ with $u>1$, then $\si_{\un{t}}(x_1,\ldots,x_u)\in J_{|\un{t}|-t_1}$. 
\end{lemma}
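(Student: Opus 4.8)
The plan is to peel off the first variable, reducing the statement to a two-letter identity, and then to settle that identity by transporting it through the substitution of Lemma~\ref{lemma_GL_sets}.

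First I would reduce to the case $u=2$. Set $m=|\un{t}|-t_1=t_2+\cdots+t_u$ and $w=\la_2 x_2+\cdots+\la_u x_u$ with $\la_2,\ldots,\la_u\in\FF$. Since relation~(a) of Lemma~\ref{lemma_GL_Donkin} holds in $\AlgLarge$, applying it to $\si_{|\un{t}|}(\la_1 x_1+w)=\sum_{i+j=|\un{t}|}\la_1^i\,\si_{(i,j)}(x_1,w)$ and comparing with the expansion of $\si_{|\un{t}|}(\la_1 x_1+\cdots+\la_u x_u)$ shows that $\si_{\un{t}}(x_1,\ldots,x_u)$ is exactly the coefficient of $\la_2^{t_2}\cdots\la_u^{t_u}$ in $\si_{(t_1,m)}(x_1,w)$. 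Now $\si_{(t_1,m)}(x_1,w)$ is the image of $\si_{(t_1,m)}(x_1,x_2)$ under the substitution endomorphism $x_2\mapsto w$, $x_1\mapsto x_1$; since any substitution endomorphism $\varphi$ satisfies $\varphi(\si_m(a))=\si_m(\varphi(a))$, we have $\varphi(J_m)\subseteq J_m$. Because $\FF$ is infinite, a Vandermonde argument shows that every coefficient in the $\la_i$ of an element of $J_m$ again lies in $J_m$. Hence it suffices to prove the two-letter statement $\si_{(k,m)}(x_1,x_2)\in J_m$ for all $k\ge 1$, $m\ge 1$.

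For the two-letter case I would apply Lemma~\ref{lemma_GL_sets} with $x_0=x_1$, $x=x_2$, that is, the substitution endomorphism $\varphi$ determined by $\varphi(e_i)=x_1^i x_2$. Writing $\si_{(k,m)}(x_1,x_2)=(-1)^{k+m}\sum_{\omega\in\Omega((k,m))}\si(\omega)$ as in~(\ref{eq1}), I would split each multiset $\omega$ into its copies of the single primitive $x_1$ (of bidegree $(1,0)$) and the remaining primitives, all of which involve $x_2$. The bijection $\ov{\varphi}$ of Lemma~\ref{lemma_GL_sets} identifies the $\stackrel{c}{\sim}$-classes of the latter with $\stackrel{c}{\sim}$-classes of primitives in $x_2,e_1,e_2,\ldots$; since $\si$ is cyclic-invariant and $\varphi$ preserves $\si$, each $\si(\omega)$ equals $\varphi$ applied to $\pm\,\si_{k_0}(x_1)\,\si(\widetilde{\omega})$, where $\widetilde{\omega}$ is a multiset of primitives in $x_2,e_1,e_2,\ldots$. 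Crucially, $\varphi(e_i)$ and $\varphi(x_2)$ all have $x_2$-degree $1$, so the $x_2$-degree $m$ of the original corresponds to the total degree $m$ of $\widetilde{\omega}$ in the letters $x_2,e_1,e_2,\ldots$. Summing, $\si_{(k,m)}(x_1,x_2)$ becomes $\varphi$ of a sum of terms $\pm\,\si_{k_0}(x_1)\,\si_{\un{s}}(x_2,e_{i_1},\ldots)$ with $|\un{s}|=m$. Each $\si_{\un{s}}(x_2,e_{i_1},\ldots)$ is a partial linearization of $\si_m$ and hence lies in $J_m$ by Remark~\ref{remark_GL_OK1}; as $J_m$ is an ideal with $\varphi(J_m)\subseteq J_m$, the whole expression lies in $J_m$.

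The delicate part is the bookkeeping in the second step: one must check that summing the contributions $\si(\widetilde{\omega})$ over all ways of realizing the prescribed $x_1$-degree $k$ (distributed through the indices $i$ via $\varphi(e_i)=x_1^i x_2$) and $x_2$-degree $m$ produces genuine partial linearizations $\si_{\un{s}}$ of $\si_m$, and that the pure-$x_1$ part is correctly isolated as the factor $\si_{k_0}(x_1)$ accounting for the extra class $\ov{x_0}$ in Lemma~\ref{lemma_GL_sets}. As a sanity check, for $k=1$ the expansion collapses to $\si_{(1,m)}(x_1,x_2)=\tr(x_1)\si_m(x_2)-\si_{(1,m-1)}(x_1x_2,x_2)$, where both summands visibly lie in $J_m$, being $\tr(x_1)$ times $\si_m(x_2)$ and a partial linearization of $\si_m$, respectively. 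I expect the main obstacle to be organizing this regrouping for general $k$, where several primitives of positive $x_1$-degree may occur in the same multiset.
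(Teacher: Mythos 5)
Your proposal is correct and follows essentially the same route as the paper: the two\mbox{-}letter case is handled by pulling the sum defining $\si_{(k,m)}(x_1,x_2)$ back through the bijection of Lemma~\ref{lemma_GL_sets} and regrouping it as $\sum\pm\,\si_{\al_0}(x_1)\,\si_{\Delta'}(x_2,x_1x_2,x_1^2x_2,\ldots)$ with $|\Delta'|=m$, which is exactly the paper's key formula~(\ref{eq3_lemma_GL_key}), and the general case is reduced to $u=2$ via closure of $J_m$ under partial linearizations. The only cosmetic difference is that you perform the reduction to $u=2$ first rather than last.
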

\begin{proof}
We work in $\AlgLarge$. Assume that $u=2$. It is convenient to denote $x_0=x_1$, $x=x_2$, $e_1=x_3$, $e_2=x_4$ and so on. For short, we set $\un{t}=(k,t)$.  In what follows, we use notations from Lemma~\ref{lemma_GL_sets}. Let $\Upsilon_I$ be the set of finite multisubsets of $\Theta_I$ and $\Upsilon_{II}$ be the set of finite multisubsets of $\Theta_{II} \sqcup\{x_0\}$. We define the $\stackrel{c}{\sim}$-equivalence on $\Upsilon_I$ naturally and denote by $\ov{\Upsilon}_I$ the set of all $\stackrel{c}{\sim}$-equivalence classes. Similarly we define $\ov{\Upsilon}_{II}$. Then Lemma~\ref{lemma_GL_sets} implies that $\ov{\varphi}:\ov{\Upsilon}_{II}\to \ov{\Upsilon}_I$ is a bijection. 

Let us recall that $\Omega(\un{t})$ was defined in Section~\ref{section2}. Assume that $\omega$ belongs to $\ov{\Upsilon}_{I}$ or $\ov{\Upsilon}_{II}$. Since we work in $\AlgLarge$, the element $\si(\omega)$ is well-defined. For short, we write $\mdeg(\omega)$ for $\mdeg(\si(\omega))$. 
By the definition, 
\begin{eq}\label{eq1_lemma_GL_key}
 \si_{(k,t)}(x_0,x)=(-1)^{k+t} \sum_{\omega\in \ov{\Omega}_I} \si(\omega),
\end{eq}%
where $\ov{\Omega}_I=\ov{\Omega}(k,t)=\{\omega\in\ov{\Upsilon}_{I}\,|\,\mdeg(\omega)=(k,t)\}$. For $\ov{\Omega}_{II}=\{\omega\in\ov{\Upsilon}_{II}\,|\,\mdeg(\ov{\varphi}(\omega))=(k,t)\}$ an isomorphism of sets $\ov{\Omega}_{II}\simeq \ov{\Omega}_I$ is determined by the restriction of $\ov{\varphi}$. 

Given $d\geq0$ and $\Delta=(\al_0,\al,\al_1,\ldots,\al_d)\in\NN_0^{d+2}$, where $\al_d>0$ in case $d>0$, we denote $\varphi(\Delta)=\mdeg(\varphi(x_0^{\al_0}x^{\al}e_1^{\al_1}\cdots e_d^{\al_d}))=(\al_0+\sum_{i=1}^d i\al_i,\al+\sum_{i=1}^d \al_i)$ and $\ov{\Omega}_{II}^{\Delta}=\{\omega\in\ov{\Upsilon}_{II}\,|\,\mdeg(\omega)=\Delta\}$. Thus 
\begin{eq}\label{eq2_lemma_GL_key}
 \ov{\Omega}_{II}=\bigsqcup \ov{\Omega}_{II}^{\Delta}, 
\end{eq}%
where the union ranges over $\Delta$ satisfying $\varphi(\Delta)=(k,t)$. Consequently applying formula~(\ref{eq1_lemma_GL_key}), the isomorphism $\ov{\Omega}_{II}\simeq \ov{\Omega}_I$, and formula~(\ref{eq2_lemma_GL_key}) we obtain
$$\si_{(k,t)}(x_0,x)=(-1)^{k+t}\!\!\!\! \sum_{\varphi(\Delta)=(k,t)}\; \sum_{\omega\in\ov{\Omega}_{II}^{\Delta} } \si(\ov{\varphi}(\omega)).$$%
Note that $\sum_{\omega\in\ov{\Omega}_{II}^{\Delta} } \si(\ov{\varphi}(\omega))=(-1)^{\al_0+|\De'|} \si_{\al_0}(x_0)\,  \si_{\Delta'}(x,\varphi(e_1),\ldots,\varphi(e_d))$, where $\Delta'$ stands for $(\al,\al_1,\ldots,\al_d)$. Since the condition $\varphi(\Delta)=(k,t)$ implies $|\De'|=t$, we have 
\begin{eq}\label{eq3_lemma_GL_key}
 \si_{(k,t)}(x_0,x)=\sum (-1)^{\al_0+k}\si_{\al_0}(x_0)\, \si_{\Delta'}(x,x_0 x,x_0^2 x,\ldots,x_0^d x)
\end{eq}%
for $\Delta'=(\al,\al_1,\ldots,\al_d)$, where the sum ranges over $d\geq0$, $\al_0,\al,\al_1,\ldots,\al_d\geq0$ such that $\al_d>0$ in case $d>0$, 
$$\al_0+\sum_{i=1}^d i\al_i = k \;\text{ and }\;\al + \sum_{i=1}^d \al_i = t.$$
Thus, $\si_{(k,t)}(x_0,x)\in J_t$ and the required is proven for $u=2$.

Let $u>2$. By the considered case of the lemma, $\si_{(t_1,t-t_1)}(x_1,x_2)\in J_{t-t_1}$ for $t=|\un{t}|$. Remark~\ref{remark_GL_OK1} implies that $\si_{\un{t}}(x_1,\ldots,x_u)$ is a partial linearization of $\si_{(t_1,t-t_1)}(x_1,x_2)$. The fact that the ideal $J_{t-t_1}$ is closed with respect to taking partial linearizations completes the proof. 
\end{proof}

%------Ex3.10--------------------------------------------------
\begin{example}\label{ex_GL_1}
For letters $x_0$ and $x$ the following equalities hold in $\AlgLarge$: 
\begin{enumerate}
\item[$\bullet$] $\si_{(1,1)}(x_0,x)= \tr(x_0)\tr(x) - \tr(x_0x)\in J_1$; 

\item[$\bullet$] $\si_{(2,2)}(x_0,x) = \si_2(x_0)\si_2(x) - \tr(x_0)\si_{(1,1)}(x,x_0x) + \si_2(x_0x) + \si_{(1,1)}(x,x_0^2x) \in J_2$; 

\item[$\bullet$] for $\un{t}=(1,t_2,\ldots,t_u)$ we have $\si_{\un{t}}(x_1,\ldots,x_u) = \tr(x_1) \si_{(t_2,\ldots,t_u)}(x_2,\ldots,x_u) - \sum_{i=2}^u \si_{\un{t}^{(i)}}(x_1x_i, x_2,\ldots,x_u)\in J_{|\un{t}|-1}$, where $\un{t}^{(i)}=(1,t_2,\ldots,t_i-1,\ldots,t_u)$.
\end{enumerate}
The first two equalities are partial cases of key formula~(\ref{eq3_lemma_GL_key}) from the proof of Lemma~\ref{lemma_GL_key}.
\end{example}
\bigskip

A statement similar to the following lemma was proved by Samoilov in~\cite{Samoilov_2008}.

%------L3.11--------------------------------------------------
\begin{lemma}\label{lemma_GL_key2}
For every $\un{t}\in\NN^u$ we have $\si_{\un{t}}(x_1,\ldots,x_u)\in J_{|\un{t}|}^{(p)}$.
In particular, $\si_t(x_1)\in\AlgLarge$ is a polynomial in $\si_{p^i}(x_1^j)$ for $i\geq0$ and $j\geq1$.  
\end{lemma}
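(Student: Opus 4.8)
The plan is to treat the two assertions separately: the membership $\si_{\un{t}}(x_1,\ldots,x_u)\in J_{|\un{t}|}^{(p)}$ rests only on the scaling rule $\si_t(\al a)=\al^t\si_t(a)$ together with a congruence for multinomial coefficients, whereas the single-variable ``in particular'' uses relation~(b) of Lemma~\ref{lemma_GL_Donkin} in an essential way.

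For the membership I would first record a \emph{collision identity}. By Remark~\ref{remark_GL_OK1}, $\si_{\un{c}}(y,\ldots,y)$ (with $k$ equal arguments) is the coefficient of $\mu_1^{c_1}\cdots\mu_k^{c_k}$ in $\si_{|\un{c}|}\!\bigl((\mu_1+\cdots+\mu_k)y\bigr)=(\mu_1+\cdots+\mu_k)^{|\un{c}|}\si_{|\un{c}|}(y)$, so
\[
\si_{\un{c}}(y,\ldots,y)=\binom{|\un{c}|}{c_1,\ldots,c_k}\,\si_{|\un{c}|}(y).
\]
Since partial linearizations are multilinear in each slot, the same identity holds slot by slot inside any $\si_{\un{t}}$. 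Given $\un{t}$, I would expand each entry $t_i=\sum_m d_m p^m$ in base $p$ and refine the $i$-th slot into $d_m$ slots of entry $p^m$, all carrying the argument $x_i$; the resulting multinomial coefficient $\binom{t_i}{(p^m)^{d_m}}$ involves \emph{no carries} in base $p$ (adding $d_m<p$ copies of $p^m$ stays within digit $m$), hence is a unit in $\FF$ by Kummer's theorem. Doing this in every slot rewrites $\si_{\un{t}}(x_1,\ldots,x_u)$ as a single unit multiple of some $\si_{\un{s}}(\ldots)$ whose entries are all powers of $p$ and whose arguments are letters, i.e.\ an element of $J_{|\un{t}|}^{(p)}$.

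For the single-variable statement I would pass to the polynomial subalgebra $A=\FF[\si_1(x_1),\si_2(x_1),\ldots]$, whose generators are algebraically independent (as in the proof of Lemma~\ref{lemma_P}). By relation~(b) each $\si_{p^i}(x_1^j)$ lies in $A$, and by Lemma~\ref{lemma_P}(2) one has $\si_{p^i}(x_1^{p^a j'})=\si_{p^i}(x_1^{j'})^{p^a}$; hence the subalgebra $S$ generated by all $\si_{p^i}(x_1^j)$ is already generated by the reduced elements $g_N:=\si_{p^i}(x_1^j)$ with $N=jp^i$ and $p\nmid j$ --- exactly one, of degree $N$, for each $N\ge1$. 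I would then show that in $A$
\[
g_N=c_N\,\si_N(x_1)+(\text{products of }\si_k(x_1)\text{ with }k<N),\qquad c_N=\pm j\not\equiv 0 \pmod p,
\]
and finish by a triangular induction on $N$: from $\si_1(x_1)=g_1$ upward, $\si_N(x_1)=c_N^{-1}\bigl(g_N-(\text{lower products})\bigr)\in S$, so every $\si_t(x_1)\in S$.

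The computation of $c_N$ is the main obstacle, and it is exactly where the $p$-power subscript is indispensable. Identifying $\si_k(x_1)$ with the elementary symmetric function $e_k$ in eigenvalues (the diagonal substitution used after~(\ref{eq_P})), one has $g_N=e_{p^i}(\alpha^j)=m_{(j^{p^i})}$, and extracting the coefficient of $e_N$ is a standard symmetric-function computation: under the Hall pairing the products of elementary symmetric functions are dual to the forgotten symmetric functions, the one indexed by $(N)$ being $(-1)^{N-1}$ times the power sum $\tr(x_1^N)$, so $c_N$ equals $(-1)^{N-1}$ times the coefficient of the monomial $h_j^{p^i}$ (in complete homogeneous functions) in $\tr(x_1^N)$. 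Reading this off from $\sum_{N\ge1}\tfrac1N\tr(x_1^N)T^N=\log\sum_{k\ge0}h_kT^k$, only the single composition $(j,\ldots,j)$ of $N$ into $p^i$ equal parts contributes, giving $c_N=(-1)^{N-1}(-1)^{p^i-1}j=\pm j$. The delicate point is precisely that the apparent denominator $p^i$ cancels against the factor $N=jp^i$: for $p\mid N$ the ``naive'' leading coefficient $\pm N$ would vanish modulo $p$, and it is the rigidity of the $p$-power subscript that replaces it by the unit $\pm j$.
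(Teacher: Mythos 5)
Your proof is correct. For the containment $\si_{\un{t}}(x_1,\ldots,x_u)\in J_{|\un{t}|}^{(p)}$ you are essentially reproducing the paper's argument: your collision identity is the general form of Lemma~\ref{lemma_GL_OK2}, proved by the same coefficient extraction from $\si_{|\un{c}|}\bigl((\mu_1+\cdots+\mu_k)y\bigr)=(\mu_1+\cdots+\mu_k)^{|\un{c}|}\si_{|\un{c}|}(y)$, and your no-carry/Kummer observation is exactly the paper's verification that $\be_{t_1}=(p^{\al_1}!)^{l_1}\cdots(p^{\al_k}!)^{l_k}/t_1!$ is a $p$-adic unit, carried out there with Legendre's formula; your variant has the mild advantage of working over $\FF$ from the start instead of first proving the identity over $\QQ$ and then reducing. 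For the ``in particular'' you genuinely diverge. The paper never computes a leading coefficient: it notes that by the first part $\si_t(x_1)$ is a unit multiple of some $\si_{\un{s}}(x_1,\ldots,x_1)$ with $p$-power entries, and that for $\#\un{s}>1$ the defining formula~(\ref{eq1}) rewrites this as a polynomial in $\si_k(x_1^j)$ with $k<t$, so a descent on the subscript $k$ terminates at the elements $\si_{p^i}(x_1^j)$ with no computation at all. You instead identify $\FF[\si_1(x_1),\si_2(x_1),\ldots]$ with the ring of symmetric functions, compute the coefficient $c_N=\pm j$ of $e_N$ in $\si_{p^i}(x_1^j)=m_{(j^{p^i})}$ via the Hall pairing and the logarithm of the generating function of the complete homogeneous functions, and run a triangular induction on $N$. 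That computation checks out --- the cancellation of the denominator $p^i$ against $N=jp^i$ is indeed the whole point, and since the transition coefficients between the $m$- and $e$-bases are integers the characteristic-zero calculation reduces mod $p$ --- and it buys an explicit unit leading coefficient that the paper's descent does not provide; the cost is importing the symmetric-function formalism where the paper only needs its own definition of $\si_{\un{t}}$.
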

\begin{proof} We work in $\AlgLarge$. If $p=0$, then applying Lemma~\ref{lemma_GL_OK2} several times we obtain the first claim of the lemma.

Assume $p>0$. The positive integer $t_1$ can be written in a base $p$ expansion in the following form: $t_1=\sum_{i=1}^{k} l_i p^{\al_i}$ for $1\leq l_1,\ldots,l_k\leq p-1$, $0\leq \al_1<\cdots<\al_k$, and $k\geq1$. Denote $l=l_1+\cdots+l_k$, 
$$\un{t}' = (\underbrace{p^{\al_1},\ldots,p^{\al_1}}_{l_1},\ldots, \underbrace{p^{\al_k},\ldots,p^{\al_k}}_{l_k},t_2,\ldots,t_u),
\text{ and }
x_1^{(i)}=(\underbrace{x_1,\ldots,x_1}_i).
$$%
Lemma~\ref{lemma_GL_OK2} implies that for $\al=(p^{\al_1}!)^{l_1}\cdots (p^{\al_k}!)^{l_k}$ we have
$$\al\, \si_{\un{t}'}(x_1^{(l)},x_2,\ldots,x_u)=\si_{(1^{t_1},t_2,\ldots,t_u)}(x_1^{(t_1)},x_2,\ldots,x_u)\text{ and }$$
$$t_1!\, \si_{\un{t}}(x_1,\ldots,x_u)=\si_{(1^{t_1},t_2,\ldots,t_u)}(x_1^{(t_1)},x_2,\ldots,x_u).$$
Hence 
\begin{eq}\label{eq4}
 \si_{\un{t}}(x_1,\ldots,x_u)=\be_{t_1} \si_{\un{t}'}(x_1^{(l)},x_2,\ldots,x_u)
\end{eq}%
over $\QQ$, where $\be_{t_1}=\al/t_1!$. We claim that 
\begin{eq}\label{eq_statement_beta}
\be_{t_1}\neq0 \text{ is well-defined over an arbitrary field } \FF \text{ of characteristic } p. 
\end{eq}%
Denote $[\be]=\max\{\ga\in\ZZ\,|\,\ga\leq \be\}$ for $\be\in\QQ$. Given $m=p^\ga q\in\NN$ with $\ga,q\in\NN_0$ such that $p$ is not a divisor of $q$, we write $\ga_m$ for $\ga$. Note that $\ga_{m!}=\sum_{j\geq 1} \#\{1\leq i\leq m\,|\,p^j \text{ is a divisor of } i\}=\sum_{j\geq 1}[m/p^j]$. Therefore,
$$\ga_{t_1!}=\sum_{j\geq 1}\sum_{i=1}^k l_i\, [p^{\al_i-j}]=\sum_{i=1}^k l_i \left(p^{\al_i-1}+p^{\al_i-2}+\cdots+1\right)=\ga_{\al}.$$
Statement~(\ref{eq_statement_beta}) is proven. Thus~(\ref{eq4}) holds over $\FF$. Repeating this procedure for $t_2,\ldots,t_u$, we obtain the first claim of the lemma.

The proven part of the lemma implies that $\si_t(x_1)$ belongs to $\FF$-span of $\si_{\un{t}}(x_1,\ldots,x_1)$, where $\un{t}\in\NN^u$, $|\un{t}|=t$,  and $t_i\in\{1,p,p^2,\ldots\}$ for all $i$. If $u>1$, then $\si_{\un{t}}(x_1,\ldots,x_1)$ is a polynomial in $\si_k(x_1^j)$ for $1\leq k<t$ and $j>0$. So we can apply the above reasoning to $\si_k(x_1^j)$ and so on. Finally, we prove the second claim of the lemma.
\end{proof}

Now we can prove Theorem~\ref{theo_GL} and Remark~\ref{remark_theo_GL}:
\begin{proof}
We work in $\AlgLarge$. Since the field $\FF$ is infinite, part~2 of Theorem~\ref{theo_Zubkov} together with Remark~\ref{remark_GL_OK1} implies that $\KLarge{n}$ is generated by 
\begin{enumerate}
\item[(a)] $\si_{\un{t}}(a_1,\ldots,a_u)=0$ for $|\un{t}|>n$, where $\un{t}\in\NN^u$, $u>1$, and $a_1,\ldots,a_u\in\X$;

\item[(b)] $\si_t(b)=0$ for $t>n$, where $b\in\X$.
\end{enumerate}

Consider relations~(a). For short, we set $t=|\un{t}|$ and $\un{a}=(a_1,\ldots,a_u)$. Since $\si_{\un{t}}(\un{a})=\si_{\un{t}^{\si}}(a_{\si(1)},\ldots,a_{\si(u)})$ for all $\si \in S_u$, where $\un{t}^{\si}$ stands for $(t_{\si(1)},\ldots,t_{\si(u)})$, we can always assume that $t_1\geq \cdots \geq t_u$. By Lemma~\ref{lemma_GL_key2} we can assume that $t_i\in\{1,p,p^2,\ldots\}$ for all $i$. If $t>n+1$ and $t_i=1$ for some $i$, then Lemma~\ref{lemma_GL_key} implies that $\si_{\un{t}}(\un{a})\in J_{t-1}$. Repeating this procedure several times we obtain that every relation from~(a) follows from relations~(b) and such relations~(a) that have $\un{t}$ satisfying one of the following conditions:  
\begin{enumerate}
\item[1)] $t=n+1$ and $t_i\in\{1,p,p^2,\ldots\}$ for all $i$;

\item[2)] $t>n+1$ and $t_i\in\{p,p^2,\ldots\}$ for all $i$. 
\end{enumerate}

Consider the second case. If $t-t_{u}>n$, then $\si_{\un{t}}(\un{a})\in J_{t-t_u} \subset \KLarge{n}$ by Lemma~\ref{lemma_GL_key} and  part~2 of Theorem~\ref{theo_Zubkov}. Therefore, we can assume that $t-t_{u}\leq n$. If $p>n$, then $t-t_u=t_1+\cdots+t_{u-1}\geq p>n$; a contradiction. Thus in case $p>n$ we can assume that $\un{t}$ satisfies condition~1). If $p\leq n$ and $t> 2n$, then $n\geq t-t_u> 2n - t_u$; thus $t_1,\ldots,t_u>n$ and $t-t_u > n$; a contradiction.  Thus in case $p\leq n$ we can assume that $t\leq 2n$. Hence, $\un{t}$ satisfies conditions~(\ref{eq_cond1}), (\ref{eq_cond2}), (\ref{eq_cond3}). 

Consider relations~(b). If $b=c^l$ for $l>1$ and $c\in\EX$, then $\si_t(b)=P_{t,l}(c)$ in $\AlgLarge$ (see Lemma~\ref{lemma_GL_Donkin}). Part~1 of Lemma~\ref{lemma_P} implies that relations (b) follow from 
\begin{enumerate}
\item[($\rm b'$)] $\si_t(b)=0$ for $t>n$, where $b\in\EX$.
\end{enumerate}
The required is proven.
\end{proof}

As we have already mentioned, the first part of the following lemma is a reformulation of the result from~\cite{Razmyslov_1974} and~\cite{Procesi_1976}. The second part is new.

%------L3.12--------------------------------------------------
\begin{lemma}\label{lemma_GL_fb}
If $p=0$, then the ideal $\KLarge{n}\vartriangleleft \AlgLarge$ is generated by $\si_{n+1}(a)=0$ for $a\in\FF\X$; in particular, $\KLarge{n}$ is finitely based.

If $p>0$, then the ideal $\KLarge{n}\vartriangleleft \AlgLarge$ is not finitely based. 
\end{lemma}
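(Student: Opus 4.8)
Let $J_{n+1}$ denote the ideal of $\AlgLarge$ generated by $\si_{n+1}(a)$, $a\in\FF\X$; the inclusion $J_{n+1}\subseteq\KLarge{n}$ is part~2 of Theorem~\ref{theo_Zubkov}, so I would prove the reverse inclusion using the generating set of Theorem~\ref{theo_GL}. In characteristic zero condition~(\ref{eq_cond2}) forces every $t_i=1$, so by Remark~\ref{remark_theo_GL} the only surviving generators are the partial linearizations $\si_{1^{n+1}}(a_1,\ldots,a_{n+1})$ and the elements $\si_t(b)$ with $t>n$, $b\in\EX$. The former lie in $J_{n+1}$ because, $\FF$ being infinite, Remark~\ref{remark_GL_OK1} exhibits them as coefficients of $\si_{n+1}(\la_1 a_1+\cdots+\la_{n+1}a_{n+1})\in J_{n+1}$. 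For the latter I would induct on $t>n$: Lemma~\ref{lemma_GL_key} gives $\si_{1^{t}}(a_1,\ldots,a_t)\in J_{t-1}$, while the full polarization satisfies $\si_{1^{t}}(b,\ldots,b)=t!\,\si_t(b)$; since $t!$ is invertible over $\QQ$, this yields $\si_t(b)\in J_{t-1}\subseteq J_{n+1}$ by the inductive hypothesis. Hence $\KLarge{n}=J_{n+1}$, which is generated as a $\Tid$-ideal by the single element $\si_{n+1}(x_1)$, so it is finitely based.

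\textbf{Part 2 (the case $p>0$).} This is the new and harder half. View the free polynomial algebra $\AlgLarge$ with its augmentation ideal $\mathfrak{m}$ generated by all free generators $\si_t(e)$ ($e\in\EX$, $t>0$), and let $\delta:\AlgLarge\to\FF$ be the $\FF$-linear functional reading off the coefficient of the single generator $\si_{p^k}(x_1)$; then $\delta$ vanishes on $\FF\cdot 1$ and on $\mathfrak{m}^2$. The plan is to assume $\KLarge{n}$ is generated as a $\Tid$-ideal by homogeneous relations $f_1,\ldots,f_s$ of degree at most $N$, to fix $k$ with $p^k>\max\{n,N\}$, and to derive a contradiction from the fact that $\si_{p^k}(x_1)\in\KLarge{n}$ (part~2 of Theorem~\ref{theo_Zubkov}) while $\delta(\si_{p^k}(x_1))=1$.

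First I would reduce the computation of $\delta$ on the $\Tid$-ideal. Since every $f_j$ lies in $\mathfrak{m}$ and each substitution endomorphism $\varphi$ is an algebra homomorphism with $\varphi(\mathfrak{m})\subseteq\mathfrak{m}$, for any $g,h\in\AlgLarge$ one has $\delta(g\,\varphi(f_j)\,h)=g_0 h_0\,\delta(\varphi(f_j))$ with $g_0,h_0\in\FF$ the constant terms, and $\delta(\varphi(f_j))=\delta(\varphi(\ell_1(f_j)))$, where $\ell_1(f_j)=\sum c_{t,e}\,\si_t(e)$ is the length-one part; here $t\deg(e)\leq N<p^k$. Because $\delta$ extracts a generator supported on the single letter $x_1$ and the only primitive monomial in $x_1$ is $x_1$ itself, only the pure-$x_1$ part $\psi\in\FF[x_1]$ (without constant term) of each $\varphi(e)$ matters, and the contribution equals the coefficient of $\si_{p^k}(x_1)$ in $\si_t(\psi)$. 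Thus everything reduces to the following claim: for $0<t<p^k$ and any $\psi\in\FF[x_1]$ without constant term, the coefficient of $\si_{p^k}(x_1)$ in $\si_t(\psi)$ is zero.

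The main obstacle is precisely this claim, and I expect it to be the technical heart of the argument. When $\psi=c\,x_1^{m}$ is a monomial it is immediate: a nonzero coefficient forces $tm=p^k$, hence $t=p^i$, $m=p^{k-i}$ with $i<k$, and then Lemma~\ref{lemma_P}(2) gives $\si_{p^i}(x_1^{p^{k-i}})=\si_{p^i}(x_1)^{p^{k-i}}$, a product of length $p^{k-i}\geq 2$ on which $\delta$ vanishes. For a genuine polynomial $\psi$ the cross terms of $\si_t(\psi)$ do contribute length-one summands, so here I would pass to the symmetric-function model $\si_j(x_1)\leftrightarrow e_j(\xi_1,\ldots,\xi_M)$ and isolate the coefficient of $e_{p^k}$ by specializing the $\xi_i$ to the $p^k$-th roots of a parameter. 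The cyclic group $C_{p^k}$ then permutes the resulting configurations; its order-$p$ subgroup has no fixed configuration unless $p\mid t$, and in the remaining case a Frobenius identification reduces the fixed-point contribution to the analogous coefficient for exponent $p^{k-1}$ and degree $t/p<p^{k-1}$, which vanishes by induction on $k$. This establishes the claim, so $\delta$ annihilates $\Tid\langle f_1,\ldots,f_s\rangle$, contradicting $\delta(\si_{p^k}(x_1))=1$; hence no finite $N$ suffices and $\KLarge{n}$ is not finitely based. The contrast with Part~1 is instructive: it is exactly the collapse $\si_{p^i}(x_1^{p^{k-i}})=\si_{p^i}(x_1)^{p^{k-i}}$, raising the length above one whenever $i<k$, that replaces the invertibility of $t!$ and blocks the recovery of $\si_{p^k}(x_1)$ from lower-degree relations.
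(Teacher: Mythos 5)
Your Part 1 is correct, and for the generators $\si_t(b)$ with $t>n$ it takes a genuinely different route from the paper: you descend through the chain of ideal inclusions $J_t\subseteq J_{t-1}$ (full polarization via Lemma~\ref{lemma_GL_OK2}, which is legitimate since $t!$ is invertible, followed by Lemma~\ref{lemma_GL_key}), whereas the paper writes $\si_t(x)$ as a polynomial $f_t$ in the power traces $\tr(x^j)$ using Lemma~\ref{lemma_GL_key2}, eliminates $\tr(x^j)$ for $j>n$ modulo $J_{n+1}$ by means of $\si_{(n,1)}(x,x^{j-n})$, and concludes $f_t\in J_{n+1}$ from the algebraic independence of $\si_1(X),\ldots,\si_n(X)$. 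Both arguments work; yours is arguably the shorter one, and your reduction of the partial linearizations to $\si_{1^{n+1}}$ via condition~(\ref{eq_cond2}) matches the paper.

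The gap is in Part 2, exactly at the claim you yourself single out as the technical heart: that for $0<t<p^k$ and $\psi\in\FF[x_1]$ without constant term the coefficient of $\si_{p^k}(x_1)$ in $\si_t(\psi)$ vanishes. The claim is true, but the mechanism you propose for it does not exist in characteristic $p$: the polynomial $X^{p^k}-T$ has a single root of multiplicity $p^k$, so there is no configuration of distinct ``$p^k$-th roots of a parameter'' for $C_{p^k}$ to permute, and the subsequent steps (fixed points of the order-$p$ subgroup, the Frobenius identification, the induction on $k$) are left undefined; if you instead intend to lift to $\ZZ$ or $\CC$ and count orbits there, you would have to say what the permuted objects are and why the attached signs are constant on orbits, which the sketch does not do. Moreover the detour is unnecessary, because your own monomial argument already settles the general case term by term. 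By Lemma~\ref{lemma_GL_Donkin} the expression of an element of $\AlgLarge$ as a polynomial in the free generators $\si_i(b)$, $b\in\EX$, is unique, and the only way a linear summand $\si_{p^k}(x_1)$ can appear in the normal form of $\si_t(\psi)$ is through an individual rewriting $\si_j(x_1^l)=P_{j,l}(x_1)$ occurring in the expansion (summands that are products of two or more generators stay so under normalization, and a direct summand $\si_j(x_1)$ would need $j=p^k>t\geq j$). Weighted-degree homogeneity of $P_{j,l}$ forces $jl=p^k$, hence $j=p^{k_0}$ and $l=p^{k-k_0}$ with $l>1$, and part~2 of Lemma~\ref{lemma_P} gives $P_{j,l}(x_1)=\si_j(x_1)^l$, which has no linear part. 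So every cross term contributes zero individually and no cancellation ever needs to be organized; this is precisely how the paper argues, and with this replacement your Part 2 closes.
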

\begin{proof}
We work in $\AlgLarge$. Assume $p=0$. For short, we write $x$ for $x_1$ and $I$ for the ideal generated by $\si_{n+1}(a)=0$ for all $a\in\FF\X$. Theorem~\ref{theo_GL} together with Remarks~\ref{remark_theo_GL} and~\ref{remark_GL_OK1} implies that $\KLarge{n}$ lies in the ideal, generated by $I$ and $\si_t(b)=0$ for $t>n$ and $b\in\EX$. By Lemma~\ref{lemma_GL_key2}, $\si_t(x)$ is equal to a polynomial $f_t$ in $\tr(x^j)$, where $1\leq j\leq t$. Given $t>n$, we make substitutions $\tr(x^j)\to-\sum_{i=1}^n (-1)^{i} f_i \tr(x^{j-i})$, where $j>n$, in $f_t$. Repeating this procedure several times we obtain a polynomial in $\tr(x),\ldots,\tr(x^n)$, which we denote by $h_t$.  Since $\si_{(n,1)}(x,x^{j-n})=\sum_{i=0}^n (-1)^{n-i} \si_{i}(x) \tr(x^{j-i})$ belongs to $I$ for $j>n$, the element $h_t$ is equal to $f_t$ modulo the ideal $I$. If $h_t$ is not equal to zero in $\AlgLarge$ for $t>n$, then the equality $\PhiLarge{n}(h_t)=0$ implies that $\tr(X_1),\ldots,\tr(X_1^n)$ are not algebraically independent over $\FF$; a contradiction. Thus, for $t>n$ we have $h_t=0$ and $f_t\in I$. The required is proven. 

Assume $p>0$. Let $\KLarge{n}$ be finitely based, i.e., $\KLarge{n}$ is generated by some elements $f_1,\ldots,f_r\in\AlgLarge$ as $\Tid$-ideal. Denote by $m$ the maximal $t>0$ such that $\si_t(a)$ is a multiple of a summand of $f_i$ for some $i$ and $a\in\EX$. Consider a letter $x$ and $k\in\NN$ satisfying $p^k>m$ and $p^k>n$. We claim that 
\begin{eq}\label{eq_claim}
\!\!\!\!\!\!\!\!\si_{p^k}(x)\in \KLarge{n} \text{ does not belong to }\Tid\text-\text{ideal of } \AlgLarge \text{ generated by } f_1,\ldots,f_r. 
\end{eq}%
If the claim does not hold, then 
\begin{eq}\label{eq2}
\si_{p^k}(x)=\sum_{i=1}^r f_i(\un{a}_i) f'_i
\end{eq}% 
in $\AlgLarge$,  where $f'_i\in\AlgLarge$, $\un{a}_i=(a_{i1},\ldots,a_{ir_i})$ for $a_{ij}\in\FF\X$ for all $i,j$, and $f_i(\un{a}_i)$ stands for the result of substitutions $x_1\to a_{i1},\ldots, x_{r_i}\to a_{ir_i}$ in $f_i$. The right hand side of~(\ref{eq2}) is equal in $\AlgLarge$ to a polynomial in elements $\{\si_i(b)\,|\, i>0,\; b\in\EX\}$, which are algebraically independent in $\AlgLarge$ (see Lemma~\ref{lemma_GL_Donkin}). Thus, there exists an $i$ such that the polynomial $f_i(\un{a}_i)\in\AlgLarge$ contains a summand $\si_{p^k}(x)$. Therefore, there are $t<p^k$, $c_1,\ldots,c_s\in\X$, and $\ga_1,\ldots,\ga_s\in\FF$ such that $\si_t(\ga_1 c_1+\cdots+\ga_s c_s)\in\AlgLarge$ contains a summand $\si_{p^k}(x)$.  To write down $\si_t(\ga_1 c_1+\cdots+\ga_s c_s)$ as a polynomial in $\{\si_i(b)\,|\, i>0,\;b\in\EX\}$ we apply relations~(a) from Lemma~\ref{lemma_GL_Donkin} and then apply relations~(b) from Lemma~\ref{lemma_GL_Donkin} to the result several times. Note that the representation of $\si_t(\ga_1 c_1+\cdots+\ga_s c_s)$ in the mentioned form is unique (see Lemma~\ref{lemma_GL_Donkin}).  Since $F_{t}(\ga_1 c_1,\ldots,\ga_s c_s)\in\si\X$ does not contain a summand $\si_{p^k}(x)$, we conclude that there exist $j<p^k$, $c\in\X$, and $l>1$ such that $\si_j(c^l)=P_{j,l}(c)\in\AlgLarge$ contains a summand $\si_{p^k}(x)$.  

It is not difficult to see that we can assume that $c=x$ and $jl=p^k$. Hence $j=p^{k_0}$ and $l=p^{k-k_0}$ for $0\leq k_0<k$. By part~2 of Lemma~\ref{lemma_P}, we obtain $P_{j,l}(c)=\si_j(x)^l\in\AlgLarge$ does not contain a summand $\si_{p^k}(x)$; a contradiction.
Thus, claim~(\ref{eq_claim}) does hold and $\KLarge{n}$ is not finitely based.
\end{proof}

%=====================================================================
%=====================================================================
%------Sec4--------------------------------------------------
\section{Proof of Theorem~\ref{theo_GL_main}}\label{section4}

We prove Theorem~\ref{theo_GL_main} together with Remark~\ref{remark_theo_GL_main} at the end of this section.  In this section we assume that~(a), (b), (c) are relations from part~2 of Theorem~\ref{theo_GL_main}. Similarly,~(d) stands for those relations (d) from part~2 of Theorem~\ref{theo_GL_main} that satisfy conditions from Remark~\ref{remark_theo_GL_main}.
As above, we assume that $n>1$.

%------L4.1--------------------------------------------------
\begin{lemma}\label{lemma_GL_3}
The ideal $\KSmall{n}$ of relations for $R^{GL(n)}\simeq \AlgSmall{n} / \KSmall{n}$ is generated by relations~(a), (c), (d) and 
\begin{enumerate}
 \item[($b_{t,l}$)] $\si_t(a^l)=P_{t,l}^{\pplus}(a)$ for $1\leq t\leq n$, $l>1$, $a\in\X$. 
\end{enumerate}
\end{lemma}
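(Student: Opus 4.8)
The plan is to realize $\KSmall{n}$ as the $\piSmall{n}$-image of $\ker\PhiAbs{n}$ and then push a known generating set of the latter through $\piSmall{n}$. Write $I_1$ for the ideal of $\AlgSmall{n}$ generated by relations~(a), (c), (d) and $(b_{t,l})$. The inclusion $I_1\subseteq\KSmall{n}$ is the routine direction: relation~(a) is Amitsur's formula~(\ref{eq_Amitsur}), (c) is cyclic invariance of the $\si_t$, $(b_{t,l})$ is the image of formula~(\ref{eq_P}) (using that $\si_k$ vanishes on $n\times n$ matrices for $k>n$), and by Remark~\ref{remark_GL_OK1} relation~(d) consists of $\piSmall{n}$-images of partial linearizations of $\si_t$ with $t>n$, which are zero on $n\times n$ matrices; so all four families lie in $\ker\PhiSmall{n}=\KSmall{n}$.

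For the reverse inclusion I would use that $\PhiAbs{n}=\PhiSmall{n}\circ\piSmall{n}$ with $\piSmall{n}$ surjective, whence $\KSmall{n}=\piSmall{n}(\ker\PhiAbs{n})$ and the $\piSmall{n}$-image of an ideal generated by a set $S$ is the ideal generated by $\piSmall{n}(S)$. By Lemma~\ref{lemma_GL_Donkin} together with Theorem~\ref{theo_GL} and Remark~\ref{remark_theo_GL}, the ideal $\ker\PhiAbs{n}=\piLarge^{-1}(\KLarge{n})$ of $\si\X$ is generated by the free relations~(a), (b), (c) for all $t\geq1$, $l,u>1$, by the restricted relations~(d) (read in $\si\X$ via~(\ref{eq1})), and by $\si_t(b)$ for $t>n$, $b\in\EX$. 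Applying $\piSmall{n}$ to each generator: the free relations (a),(c) and (b) with $t\leq n$ become exactly (a),(c) and $(b_{t,l})$; the relations (b) with $t>n$ become $0$ by part~1 of Lemma~\ref{lemma_P}; the relations (c) with $t>n$ and the generators $\si_t(b)$ with $t>n$ become $0$ since $\piSmall{n}$ kills $\si_t$ for $t>n$; and the restricted relations~(d) map to~(d). The only surviving terms not visibly in $I_1$ are the images of the free relations~(a) with $t>n$, namely
$$-\piSmall{n}(F_t(a,b))=-\sum_{\un{s}\in\NN^2,\,|\un{s}|=t}\si^{\pplus}_{\un{s}}(a,b)\qquad(t>n),$$
where the extreme summands $\si_t(a),\si_t(b)$ already vanish. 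Thus it remains to prove that $\piSmall{n}(F_t(a,b))\in I_1$ for every $t>n$.

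This last point is the heart of the lemma. I would prove it by induction on $t$, showing that every $\si^{\pplus}_{\un{s}}(a_1,\ldots,a_u)$ with $|\un{s}|>n$ and $u>1$ lies in $I_1$. The idea is to transport to $\AlgSmall{n}$ the degree-lowering reductions already carried out in $\AlgLarge$ in the proof of Theorem~\ref{theo_GL}: Lemma~\ref{lemma_GL_OK2} and Lemma~\ref{lemma_GL_key2} let me assume $s_i\in\{1,p,p^2,\ldots\}$, while the membership $\si_{\un{s}}\in J_{|\un{s}|-s_1}$ from Lemma~\ref{lemma_GL_key} expresses $\si_{\un{s}}$, modulo the free relations, through partial linearizations of strictly smaller total degree together with some $\si_t(b)$ with $b\in\EX$. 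Every free relation invoked in these reductions has $\piSmall{n}$-image in $I_1$ (by the previous paragraph when its degree is $\leq n$, trivially when it is of type (b) or (c) of degree $>n$, and by the inductive hypothesis when it is of type (a) of strictly smaller degree), while the trailing $\si_t(b)$-terms are killed by $\piSmall{n}$. Hence each $\si^{\pplus}_{\un{s}}$ is reduced, modulo $I_1$, either to a restricted relation~(d) or to $0$, closing the induction and giving $\KSmall{n}\subseteq I_1$.

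The main obstacle I anticipate is precisely this transfer of the $\AlgLarge$-computations through $\piSmall{n}$: the explicit formula~(\ref{eq3_lemma_GL_key}) and the coefficient analysis behind Lemma~\ref{lemma_GL_key2} involve factors $\si_{\al_0}(x_0)$ and intermediate $\si_k$ with $k>n$ that collapse to $0$ after applying $\piSmall{n}$, so one must verify that discarding these terms does not destroy the reduction. The delicate structural point is to keep the induction on the total degree $t$ genuinely well-founded --- that is, to check that the expression of $\si_{\un{s}}$ supplied by Lemma~\ref{lemma_GL_key} reintroduces only relations~(a) of \emph{strictly} smaller degree (so that no circular appeal to $\piSmall{n}(F_t)$ at the same degree $t$ occurs) --- and to confine the surviving targets to the range $n<|\un{t}|\leq 2n$ and the normal form~(\ref{eq_cond1})--(\ref{eq_cond3}) demanded by relation~(d).
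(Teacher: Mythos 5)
Your proposal is correct and follows essentially the same route as the paper: both realize $\KSmall{n}$ as $\piSmall{n}(\Ker(\PhiAbs{n}))$, push the generators supplied by Lemma~\ref{lemma_GL_Donkin} and the description of $\KLarge{n}$ through $\piSmall{n}$ (killing the high-degree $\si_t$ and the high-degree relations of types (b), (c)), and then reduce the surviving components $\si_{\un{t}}(a_1,\ldots,a_u)$ with $|\un{t}|>n$ using the machinery of Section~\ref{section3}. The only presentational difference is that the paper disposes of this last step by invoking Theorem~\ref{theo_GL} and Remark~\ref{remark_theo_GL} directly inside $\si\X$, where the needed free relations (${\rm b}'$), (${\rm c}'$), (${\rm d}'$) are available as ideal generators and no relations of type (a) of degree $>n$ reappear (the reductions involve only monomials inside the $\si_k$), so the induction on total degree and the circularity worry you raise at the end are not actually needed.
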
% 
\begin{proof}
Let $I$ be the ideal of $\AlgSmall{n}$ generated by the elements from the formulation of the lemma. Note that in Lemma~\ref{lemma_GL_Donkin} we can assume that relations~(a) satisfy $u=2$ in case $t\leq n$ and $a_i=\be_i b_i$ for $\be_i\in\FF$, $b_i\in\X$ in case $t>n$. Since the ideal $\KLarge{n}$ is described by part~2 of Theorem~\ref{theo_Zubkov} and $\Ker(\piLarge)=L$ is considered in Lemma~\ref{lemma_GL_Donkin}, we obtain that the ideal 
$\Ker(\PhiAbs{n})=\piLarge^{-1}(\KLarge{n})$ is generated by
\begin{enumerate}
\item[(${\rm a}'$)] $\si_t(a+b)=F_t(a,b)$, where $1\leq t\leq n$, $a,b\in\FF\X$; 

\item[(${\rm a}''$)] $\si_t(\al_1 a_1+\cdots+\al_u a_u)=F_t(\al_1 a_1,\ldots,\al_u a_u)$, where $u>1$, $t>n$, $\al_i\in\FF$, $a_i\in\X$ for all $i$; 

\item[(${\rm b}'$)] $\si_t(a^l)=P_{t,l}(a)$, where $t>0$, $l>1$, $a\in\X$;

\item[(${\rm c}'$)] $\si_t(ab)=\si_t(ba)$, where $t>0$, $a,b\in\X$;

\item[(${\rm d}'$)] $\si_t(a)=0$, where $t>n$, $a\in\FF\X$. 
\end{enumerate}
We have $\piSmall{n}(\Ker(\PhiAbs{n}))=\KSmall{n}$. Since elements $\si_{\un{t}}(a_1,\ldots,a_u)$, where $|\un{t}|>n$, $a_i\in\X$, belong to $\Ker(\PhiAbs{n})$, relations~(d) belong to $\KSmall{n}$.  Note that $\piSmall{n}$ sends (${\rm d}'$) to zero. Hence the ideal $\KSmall{n}$ of $\AlgSmall{n}$ is generated by the ideal $I$ and the images of (${\rm a}''$), (${\rm b}'$) in $\AlgSmall{n}$. 
By part~1 of Lemma~\ref{lemma_P}, in case $t>n$ the image of relations (${\rm b}'$) in $\AlgSmall{n}$ is equal to zero. Since the field $\FF$ is infinite, we can take elements 
\begin{enumerate}
\item[(${\rm a}'''$)] $\si_{\un{t}}(a_1,\ldots,a_u)\in \AlgSmall{n}$, where $u>1$, $|\un{t}|>n$, $a_1,\ldots,a_u\in\X$,  
\end{enumerate}
instead of the image of (${\rm a}''$) in $\AlgSmall{n}$. By Theorem~\ref{theo_GL} and Remark~\ref{remark_theo_GL}, an element 
$\si_{\un{t}}(a_1,\ldots,a_u)$, where $|\un{t}|>n$, $a_i\in\X$, of $\si\X$ belongs to the ideal of $\si\X$, generated by relations (${\rm b}'$), (${\rm c}'$), (${\rm d}'$), and (d),  considered as elements of $\si\X$. Therefore, relations (${\rm a}'''$) belong to $I$. The required is proven.
\end{proof}

To complete the proof of part~2 of Theorem~\ref{theo_GL_main} it is enough to show that in Lemma~\ref{lemma_GL_3} we can assume that $1< l\leq n$ in relations ($b_{t,l}$). We prove this fact in Lemma~\ref{lemma_GL_2} (see below). The definition of $\equiv$-equivalence was given in Section~\ref{section2}.

%------L4.2--------------------------------------------------
\begin{lemma}\label{lemma_GL_equiv}
Given letters $x,y$ and $1\leq t\leq n$, we have that $\si_t(x^n y)\equiv0$ in $\AlgSmall{n}$ follows from $\si_{(kn,k)}(a,b)=0$ and (c), where $1\leq k\leq t$ and $a,b\in\X$.
\end{lemma}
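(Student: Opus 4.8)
The plan is to induct on $t$ and to read off $\si_t(x^ny)$ as the ``leading'' term of the single relation $\si_{(tn,t)}(x,y)=0$. Working in $\AlgSmall{n}$, I would first expand $\si_{(tn,t)}(x,y)$ by its definition~(\ref{eq1}), namely as a signed sum of the products $\si(\omega)$ over multisets $\omega=\{e_1^{\times k_1},\ldots,e_q^{\times k_q}\}_m\in\Omega(tn,t)$ of primitive monomials $e_i\in\EX$ in the two letters $x,y$. I would then sort these summands into three groups according to the shape of $\omega$, and show that only the desired term survives modulo the allowed relations.

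Group one is the unique summand with $q=1$ and $k_1=t$: since the only $\stackrel{c}{\sim}$-class of primitive monomials of multidegree $(n,1)$ is that of $x^ny$, this summand equals $(-1)^t\si_t(x^ny)$, the term to be isolated. Group two consists of all $\omega$ with $q\geq 2$; each such $\si(\omega)$ is a product of at least two factors $\si_{k_i}(e_i)$ of positive degree, hence is $\equiv 0$ by definition (and some of these vanish anyway in $\AlgSmall{n}$). Group three is the remaining single-monomial summands $\omega=\{e_1^{\times k_1}\}_m$ with $k_1\mdeg(e_1)=(tn,t)$ and $k_1<t$, so that $e_1$ is primitive of multidegree $(mn,m)$ with $m=t/k_1\geq 2$. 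Consequently, up to an overall nonzero sign and modulo the relation $\si_{(tn,t)}(x,y)=0$, the element $\si_t(x^ny)$ equals a decomposable element plus a linear combination of the ``bad'' terms $\si_{k_1}(e_1)$ of group three, and it remains to treat each such term.

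The crux is handling group three. If $e_1$ is primitive of multidegree $(mn,m)$, I would write $e_1\stackrel{c}{\sim}x^{c_1}yx^{c_2}y\cdots x^{c_m}y$ with $c_1+\cdots+c_m=mn$; a pigeonhole count forces some $c_i\geq n$, since otherwise $\sum c_i\leq m(n-1)<mn$. Using relation~(c) to rotate this long block to the front gives $\si_{k_1}(e_1)=\si_{k_1}(x^nw)$ modulo (c), for a monomial $w$ of multidegree $((m-1)n,m)$. Now the inductive hypothesis at the smaller value $k_1<t$ yields $\si_{k_1}(x^ny)\equiv 0$ modulo $\si_{(kn,k)}(a,b)=0$ ($1\leq k\leq k_1$) and (c); applying the substitution endomorphism $x\mapsto x$, $y\mapsto w$, under which these relations are stable, gives $\si_{k_1}(x^nw)\equiv 0$ modulo the allowed relations (only $k\leq k_1<t$ occur). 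Combining the three groups yields $\si_t(x^ny)\equiv 0$. The base case $t=1$ is immediate, since group three is then empty ($m\geq 2$ is impossible) and $\si_{(n,1)}(x,y)=\pm\tr(x^ny)+(\text{decomposable})$.

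I expect the main obstacle to be the bookkeeping in group three: one must verify that every primitive monomial of multidegree $(mn,m)$ can be rotated into the form $x^nw$ (this is exactly the pigeonhole step) and that after the substitution $y\mapsto w$ one genuinely stays among the relations $\si_{(kn,k)}(a,b)=0$ with $k\leq t$, so that the induction closes. By contrast, the decomposability of the $q\geq 2$ terms and the identification of the leading term are routine once the expansion~(\ref{eq1}) is written out.
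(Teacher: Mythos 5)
Your proof is correct and follows essentially the same route as the paper: induction on $t$, expansion of the single relation $\si_{(tn,t)}(x,y)=0$ into the decomposable terms ($q\geq2$), the leading term $\si_t(x^ny)$, and the terms $\si_{k_1}(e_1)$ with $k_1\mid t$, $k_1<t$, the latter handled by the pigeonhole rotation $e_1\stackrel{c}{\sim}x^nc_0$ via (c) and the substitution-stability of the hypotheses. The paper's proof is merely terser in the pigeonhole and substitution steps, which you spell out explicitly.
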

\begin{proof} We work in the quotient of $\AlgSmall{n}$ by the ideal generated by (c). Assume that $x\neq y$. The proof is by induction on $1\leq t\leq n$. 

Let $t=1$. Since 
$$(-1)^{n}\si_{(n,1)}(x,y)=\tr(x^ny) + \sum_{i=1}^n (-1)^i \tr(x^{n-i}y)\si_i(x),$$ 
we obtain the required. 

Assume $t>1$. Denote by $J$ the ideal generated by $\si_{(kn,k)}(a,b)=0$, $1\leq k\leq t$, $a,b\in\X$. Let $\Theta_t$ be the set of monomials $c\in\EX$ in letters $x,y$ such that $\deg_x(c)=nt$ and $\deg_y(c)=t$. Since the relation $\si_{(tn,t)}(x,y)=0$ belongs to $J$, we have that
\begin{eq}\label{eq3} 
\sum (-1)^{t(n+1)-r} \!\!\!\!\sum_{c\in \Theta_{t/r}} \si_r(c)\equiv0 \;\text{ holds modulo }J, 
\end{eq}%
where the sum is taken over $1\leq r\leq t$ with $r|t$. Note that for every $c\in\Theta_{t/r}$ there exists a $c_0\in\X$ such that $c\stackrel{c}{\sim} x^n c_0$. Thus the induction hypothesis implies that for $1\leq r<t$ we have that $\si_r(c)\equiv0$ follows from $\si_{(kn,k)}(a,b)=0$, where $1\leq k\leq r<t$ and $a,b\in\X$. Thus, $\si_r(c)\equiv0$ holds modulo $J$. Since $\Theta_1=\{c\}$ for $c\stackrel{c}{\sim}x^n y$, formula~(\ref{eq3}) implies the required. 
\end{proof}

Let $I_n$ be the ideal of $\AlgSmall{n}$ generated by relations~(a), (b), (c), (d).

%------Rem4.3--------------------------------------------------
\begin{remark}\label{rem1} The ideal $I_n$ is closed with respect to substitutions $x_i\to a_i$ for $i>0$ and $a_i\in\X$. 
\end{remark}

%------L4.4--------------------------------------------------
\begin{lemma}\label{lemma_GL_2}
Relations $(b_{t,l})$ belong to $I_n$, where $l>1$ and $1\leq t\leq n$.  
\end{lemma}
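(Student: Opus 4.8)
The plan is to prove $(b_{t,l})\in I_n$ by induction on $N=tl$, the base cases $1<l\le n$ being exactly the defining relations~(b) of $I_n$. The engine of the inductive step is Lemma~\ref{lemma_GL_equiv}: for $l>n$ I write $a^l=a^n\cdot a^{l-n}$ and substitute $x\to a$, $y\to a^{l-n}$, which is legitimate because $a,a^{l-n}\in\X$ and, by Remark~\ref{rem1}, $I_n$ is closed under such substitutions. This yields that $\si_t(a^l)$ is the image of $\si_t(x^ny)$, so that $\si_t(a^l)\equiv0$ holds modulo $I_n$, i.e. $\si_t(a^l)$ is decomposable modulo $I_n$.

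Before this can be used I first have to bring Lemma~\ref{lemma_GL_equiv} inside $I_n$, since that lemma reduces $\si_t(x^ny)$ modulo relation~(c) and modulo the relations $\si_{(kn,k)}(a,b)=0$ for $1\le k\le t$. Relation~(c) is a defining relation of $I_n$; for $k=1$ the element $\si_{(n,1)}(a,b)$ has $|\un{t}|=n+1$, hence is a relation~(d); but for $k\ge2$ one has $|\un{t}|=k(n+1)>2n$. For these I establish the auxiliary fact that for every $\un{s}\in\NN^u$ with $|\un{s}|>n$ and $b_1,\dots,b_u\in\X$ one has $\si_{\un{s}}(b_1,\dots,b_u)\in I_n$. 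This is Theorem~\ref{theo_GL} transported from $\AlgLarge$ to $\AlgSmall{n}$, and its proof copies that of Theorem~\ref{theo_GL}: by induction on $|\un{s}|$ one lowers $|\un{s}|$ from $>2n$ into the range $n<|\un{s}|\le2n$ via Lemmas~\ref{lemma_GL_key} and~\ref{lemma_GL_key2}, where one lands on relations~(d) (or on zero, since $\si_j=0$ for $j>n$ in $\AlgSmall{n}$). The crucial point is that the only relations consumed are the free relations~(a),(c) — formula~(\ref{eq3_lemma_GL_key}) underlying Lemma~\ref{lemma_GL_key} holds modulo~(a),(c) alone and never invokes an instance of $(b_{t,l})$ with $l>n$ — so no circularity arises. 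In particular $\si_{(kn,k)}\in I_n$ for all $k$, and Lemma~\ref{lemma_GL_equiv} gives $\si_t(a^l)\equiv0$ modulo $I_n$.

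With decomposability in hand the inductive step closes. After the substitution every monomial occurring is a power of $a$, so modulo $I_n$ the element $\si_t(a^l)$ equals a sum of products $f_ih_i$ with $f_i,h_i$ homogeneous of positive degree, hence of degree $<N$; each factor is a polynomial in generators $\si_i(a^m)$ with $im<N$. A factor with $i>n$ vanishes; one with $m=1$ is already $\si_i(a)$; and one with $2\le m$, $i\le n$ is rewritten as $P_{i,m}^{\pplus}(a)$ by the inductive hypothesis (if $m>n$) or by relation~(b) (if $m\le n$). Hence $\si_t(a^l)\equiv Q\pmod{I_n}$ for some $Q\in\FF[\si_1(a),\dots,\si_n(a)]$.

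It remains to identify $Q$ with $P_{t,l}^{\pplus}(a)$. Since $I_n\subseteq\KSmall{n}=\Ker\PhiSmall{n}$ and $\si_t(a^l)=P_{t,l}^{\pplus}(a)$ is a genuine relation for $R^{GL(n)}$, both $\si_t(a^l)-Q$ and $\si_t(a^l)-P_{t,l}^{\pplus}(a)$ lie in $\Ker\PhiSmall{n}$, so $\PhiSmall{n}(Q)=\PhiSmall{n}(P_{t,l}^{\pplus}(a))$. As $Q$ and $P_{t,l}^{\pplus}(a)$ are both polynomials in $\si_1(a),\dots,\si_n(a)$, which $\PhiSmall{n}$ sends to the algebraically independent elements $\si_1(X),\dots,\si_n(X)$ of $R^{GL(n)}$, the map $\PhiSmall{n}$ is injective on $\FF[\si_1(a),\dots,\si_n(a)]$; therefore $Q=P_{t,l}^{\pplus}(a)$, whence $(b_{t,l})\in I_n$ and the induction is complete. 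The main obstacle is precisely the auxiliary fact $\si_{\un{s}}\in I_n$ for $|\un{s}|>n$: one must re-read the proof of Theorem~\ref{theo_GL} to be certain that every reduction is available inside $I_n$ — in particular that formula~(\ref{eq3_lemma_GL_key}) costs only relations~(a),(c) — so that the argument does not secretly appeal to the very relations $(b_{t,l})$, $l>n$, being established.
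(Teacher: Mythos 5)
Your overall architecture (induction on $tl$, decomposability of $\si_t(a^n\cdot a^{l-n})$ via Lemma~\ref{lemma_GL_equiv}, and the final identification of $Q$ with $P^{\pplus}_{t,l}(a)$ through algebraic independence of $\si_1(X),\ldots,\si_n(X)$) matches the paper. The gap is exactly at the point you flag as ``the main obstacle'' and then dispose of by assertion: the claim that the reduction of $\si_{\un{s}}(b_1,\ldots,b_u)$, $|\un{s}|>n$, into $I_n$ consumes only the free relations (a) and (c). This is false for the route you describe, because that route invokes Lemma~\ref{lemma_GL_key2}, which rests on Lemma~\ref{lemma_GL_OK2}; and the identity $t_1!\,\si_{\un{t}}(x_1,\ldots,x_u)=\si_{\un{t}'}(x_1,\ldots,x_1,x_2,\ldots,x_u)$ is an equality in $\AlgLarge$ only. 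Read in $\AlgSmall{n}$ it is itself an instance of relation~(b): already for $\un{t}=(2)$ it says $2\si_2(x)=\si_{(1,1)}(x,x)=\tr(x)^2-\tr(x^2)$, i.e.\ precisely $(b_{1,2})$, because substituting $x_i\mapsto x_1$ turns primitive monomials into proper powers $x_1^m$ and $\si_k(x_1^m)$ is a \emph{free} generator of $\AlgSmall{n}$, not a polynomial in $\si_j(x_1)$. So reducing $\si_{(kn,k)}(a,b)$ (needed for Lemma~\ref{lemma_GL_equiv}, with $k(n+1)>2n$ for $k\geq2$) down to the restricted relations~(d) of $I_n$ --- which must satisfy condition~(\ref{eq_cond2}) --- consumes instances of $(b_{i,j})$, possibly with $j>n$: exactly the relations you are in the middle of proving. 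Your induction on $N=tl$ gives you no control over which $(b_{i,j})$ these are, so the argument is circular.

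This is precisely why the paper runs a \emph{simultaneous} induction on the total degree $r$, proving $(b_{t,l})$ for $tl=r$ together with the auxiliary relations $(h_k):\si_{(kn,k)}(x,y)=0$ for $k(n+1)=r$: Lemma~\ref{lemma_GL_3} puts $h\in\KSmall{n}$ into $I_n$ modulo some $(b_{i,j})$, and a multihomogeneity count bounds these by $ij\leq\max\{\deg_x(h),\deg_y(h)\}=kn<r$, so the induction hypothesis absorbs them. To repair your proof you must either import this degree bookkeeping (i.e.\ fold the relations $\si_{(kn,k)}(a,b)=0$ into your induction and bound the $(b_{i,j})$ they cost), or genuinely avoid Lemma~\ref{lemma_GL_key2} --- e.g.\ by working with the \emph{unrestricted} relations~(d) of Theorem~\ref{theo_GL_main} and stripping components only via Lemma~\ref{lemma_GL_key}, whose formula~(\ref{eq3_lemma_GL_key}) does transport to $\AlgSmall{n}$ at the cost of (a),(c) alone since all arguments of $\si_k(\cdot)$ there stay primitive. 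As written, neither repair is carried out, so the proof does not close.
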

\begin{proof}
We work in the quotient of $\AlgSmall{n}$ by the ideal generated by (c). Assume that $x,y$ are different letters. We prove by induction on $r>1$ the claim that relations
$$\begin{array}{cl}
{\rm (b_{\it t,l})}\;\, & \text{for }tl=r,\\
{\rm (h_{\it k})}:& \si_{(kn,n)}(x,y)=0  \text{ for }k(n+1)=r
\end{array}
$$
belong to $I_n$ for all $1\leq t\leq n$, $l>1$, $k\geq1$. 

Let $r=2$. Since $n\geq2$, we obtain that $\rm (b_{1,2})$ belongs to $I_n$ and the set of relations ${\rm (h_{\it k})}$ is empty.

Assume that $r>2$. Let $h$ be a relation from $\rm (h_{\it k})$, where $k(n+1)=r$. Note that we can not claim that $h$ is a relation from (d). Since $h$ lies in $\KSmall{n}$, Lemma~\ref{lemma_GL_3} implies that $h$ belongs to $I_n$ modulo some relations from $\rm (b_{\it i,j})$ for $i,j>0$ satisfying $ij\leq \max\{\deg_x(h),\deg_y(h)\}=kn<r$. By the induction hypothesis, the mentioned relations from $\rm (b_{\it i,j})$ belong to $I_n$. Thus, all relations from $\rm (h_{\it k})$ belong to $I_n$.  

Consider $\rm (b_{\it t,l})$, where $tl=r$. If $l\leq n$, then relations $\rm (b_{\it t,l})$ belong to $I_n$ by the definition. 

Let $l>n$.  By Lemma~\ref{lemma_GL_equiv}, $\si_t(x^n y)\equiv0$ follows from $\si_{(in,i)}(a,b)=0$,  where $1\leq i\leq t$ and $a,b\in\X$. Note that $i(n+1)\leq tl=r$. If $i(n+1)<r$, then the induction hypothesis implies that $\rm (h_{\it i})$ belongs to $I_n$. On the other hand, if $i(n+1)=r$, then the proven part of the claim implies that $\rm (h_{\it i})$ belongs to $I_n$. By Remark~\ref{rem1}, 
$$\si_t(x^n y)\equiv0 \text{ holds modulo }I_n.$$% 
Since $l>n$, we obtain that modulo the ideal $I_n$ the element $\si_t(x^l)=\si_t(x^n x^{l-n})$ is a polynomial in $\si_i(x^j)$ for $ij<\deg{\si_t(x^l)}=r$ with $1\leq i\leq n$. Applying $(b_{\it i,j})$ to $\si_i(x^j)$ and using the induction hypothesis, we can see that there is a polynomial $P_{t,l}'(x)$ in $\si_1(x),\ldots,\si_n(x)$ such that $\si_t(x^l)=P_{t,l}'(x)$ modulo the ideal $I_n$. Let $X$ be the $n\times n$ generic matrix corresponding to the letter $x$. Since $I_n$ is a subset of $\KSmall{n}$, $P_{t,l}'(X)=\si_t(X^l)=P_{t,l}^{\pplus}(X)$ is the equality of polynomials in $\si_1(X),\ldots,\si_n(X)$. But it is well-known that the latter elements are algebraically independent over $\FF$. Therefore, $P_{t,l}'(x)=P_{t,l}^{\pplus}(x)$ and  $\si_t(x^l)=P_{t,l}(x)$ holds modulo $I_n$. By Remark~\ref{rem1}, relations $\rm (b_{\it t,l})$ belong to $I_n$. The claim is proven. 
\end{proof}

Now we can prove Theorem~\ref{theo_GL_main} and Remark~\ref{remark_theo_GL_main}.

\begin{proof}
By part~1 of Theorem~\ref{theo_Zubkov}, the ideal $\TLarge{n}$ is generated by  $\KLarge{n}\otimes 1$ and $\chi_n(a)=0$ for $a\in\FF\X$. For the sake of completeness, we point out that the mentioned result is a partial case of Lemma~\ref{lemma_73} (see below). Consider the surjective map $\PsiAbs{n}=\PhiAbs{n}\otimes \phi_n:\si\X\otimes\FF\X^{\#} \to \C_n$. Then the following diagram is commutative: 
$$ 
\begin{picture}(0,120)
\put(0,95){%
\put(0,-2){\vector(0,-1){58}}%
\put(15,0){\vector(3,-2){33}}%
\put(-15,0){\vector(-3,-2){33}}%
\put(50,-40){\vector(-3,-2){35}}%
\put(-50,-40){\vector(3,-2){35}}%
\put(110,0){\vector(-3,-2){33}}%
\put(-110,0){\vector(3,-2){33}}%
\put(-29,5){$\si\X\otimes \FF\X^{\#}$}%
\put(-100,-33){$\AlgSmall{n}\otimes \FF\X^{\#}$}%
\put(35,-33){$\AlgLarge\otimes \FF\X^{\#}$}%
\put(-6,-75){$\C_n$}%
\put(-125,5){$\TSmall{n}$}%
\put(115,5){$\TLarge{n}$}%
\put(3,-33){$\scriptstyle\PsiAbs{n}$}% 
\put(-54,-8){$\scriptstyle\piSmall{n}\otimes\idmap$}%
\put(33,-8){$\scriptstyle\piLarge\otimes\idmap$}%
\put(-35,-48){$\scriptstyle\PsiSmall{n}$}%
\put(25,-48){$\scriptstyle\PsiLarge{n}$}%
\put(-20,-90){\text{Diagram 3.}}%
}%
\end{picture}
$$%
Here $\idmap$ stands for the identical map on $\FF\X^{\#}$. The kernel $\TAbs{n}$ of $\PsiAbs{n}$ is equal to $(\piLarge\otimes\idmap)^{-1}(\TLarge{n})$.   
Thus, the ideal $\TAbs{n}$ is generated by $(\piLarge\otimes\idmap)^{-1}(\KLarge{n}\otimes 1)=\piLarge^{-1}(\KLarge{n})\otimes 1 +  \Ker(\piLarge)\otimes \FF\X^{\#}$ and $(\piLarge\otimes\idmap)^{-1}(\chi_n(a))$, $a\in\FF\X$. Since $\piLarge^{-1}(\KLarge{n})=\Ker(\PhiAbs{n})$ and $\Ker(\piLarge\otimes\idmap)=\Ker(\piLarge)\otimes \FF\X^{\#} \subset \Ker(\PhiAbs{n})\otimes \FF\X^{\#}$, the ideal $\TAbs{n}$ is generated by $\Ker(\PhiAbs{n})\otimes 1$ and $\chi_n(a)$, $a\in\FF\X$. 

Note that $\TSmall{n}=(\piSmall{n}\otimes\idmap)(\TAbs{n})$. Thus the ideal $\TSmall{n}$ is generated by $(\piSmall{n}\otimes\idmap)(\Ker(\PhiAbs{n})\otimes 1)=\KSmall{n}\otimes 1$ and $\chi_n(a)$, $a\in\FF\X$, considered as an element of $\AlgSmall{n}\otimes \FF\X^{\#}$. Hence part~1 of Theorem~\ref{theo_GL_main} is proven. Part~2 of Theorem~\ref{theo_GL_main} follows immediately from Lemmas~\ref{lemma_GL_3} and~\ref{lemma_GL_2}.
\end{proof}

%=====================================================================
%=====================================================================
%------Sec5--------------------------------------------------
\section{Relations for matrix $O(n)$-invariants}\label{section5}

In the rest of the paper we assume that $p\neq2$. In this section we consider identities with forms for the $\FF$-algebra generated by $n\times n$ generic and transpose generic matrices or, equivalently, identities for the algebra $\CY_n$. 

The algebra of {\it matrix $O(n)$-invariants} $R^{O(n)}$ is known to be generated by $\si_t(A)$, where $1\leq t\leq n$ and $A$ is a monomial in generic and transpose generic matrices. The mentioned generators of $R^{O(n)}$ were found by Sibirskii~\cite{Sibirskii_1968} and Procesi~\cite{Procesi_1976} in characteristic zero case and by Zubkov~\cite{Zubkov_1999} in the general case. In Section~\ref{section1} we denoted by 
$$\CY_n=\alg_{\FF}\{X_1,X_1^T,X_2,X_2^T,\ldots,fE\}$$ 
the algebra generated by generic matrices, transpose generic matrices and $fE$, where $f$ ranges over $R^{O(n)}$.  

Similarly to Section~\ref{section1} we define the following notions.
\begin{enumerate}
\item[$\bullet$] Let $\Y$ be the semigroup (without unity) freely generated by {\it letters}  $x_1,x_1^T,x_2,x_2^T,\ldots$ and $\Y^{\#}=\Y\sqcup\{1\}$.

\item[$\bullet$] Introduce a lexicographical linear order on $\Y$ by setting $x_1>x_1^T>x_2>x_2^T>\cdots$ and $ab>a$ for $a,b\in\Y$.

\item[$\bullet$] Introduce the involution ${}^T$ on $\Y$ as follows. We set $(x_k)^T=x_k^T$, $(x_k^T)^T=x_k$ for all $k$ and $(a_1\cdots a_s)^T=a_s^T\cdots a_1^T\in\Y$. 

\item[$\bullet$] We say that $a,b\in\Y$ are {\it cyclic equivalent} and write $a\stackrel{c}{\sim} b$
if $a=a_1a_2$ and $b=a_2a_1$ for some $a_1,a_2\in\Y^{\#}$. If $a\stackrel{c}{\sim} b$ or $a\stackrel{c}{\sim} b^T$, then we say that $a$ and $b$ are {\it equivalent} and write $a\sim b$. 
 
\item[$\bullet$] Using $\Y$ instead of $\X$ and $\sim$-equivalence instead of $\stackrel{c}{\sim}$-equivalence, we introduce $\FF\Y$, $\FF\Y^{\#}$, $\AlgSmallY{n}$, $\si\Y$, $\EY$,  $\AlgLargeY$, respectively, similarly to $\FF\X$, $\FF\X^{\#}$, $\AlgSmall{n}$, $\si\X$, $\EX$,  $\AlgLarge$, respectively. Note that $\AlgSmallY{n}\subset \si\Y$.

\item[$\bullet$] The algebra $\si\Y$ ($\AlgLargeY$, $\AlgSmallY{n}$, respectively) is called the {\it absolutely} ({\it large}, {\it small}, respectively) free algebra for $R^{O(n)}$. 

\item[$\bullet$] Let  $\phi'_n:\FF\Y^{\#}\to \alg_{\FF}\{E,X_1,X_1^T,X_2,X_2^T,\ldots\}$ be the homomorphism of algebras defined by $1\to E$ and $x_k\to X_k$, $x_k^T\to X_k^T$ for all $k\geq1$.
\end{enumerate}%

\noindent{}Given $\un{t}\in\NN^u$ and  $\un{a}=(a_1,\ldots,a_u)$ for $a_1,\ldots,a_u\in\FF\Y$, we define $\si_{\un{t}}(\un{a})\in\si\Y$ as the result of substitutions $x_1\to a_1,\ldots,x_u\to a_u$ in $\si_{\un{t}}(x_1,\ldots,x_u)\in\si\X$. Similarly we define elements $F_t(\un{a})$ and $P_{t,l}(b)$ of $\si\Y$, where $b\in\FF\Y$.

By Lemma~\ref{lemma_O_Lopatin} (see below), we have the surjective homomorphism $\piLargeY:\si\Y\to\AlgLargeY$. Using $\phi'_n$ instead of $\phi_n$, we define the surjective homomorphisms $\PhiAbsY{n}$, $\PhiLargeY{n}$, $\PhiSmallY{n}$, $\piSmallY{n}$, respectively, in the same way as $\PhiAbs{n}$, $\PhiLarge{n}$, $\PhiSmall{n}$, $\piSmall{n}$ (see the diagram below for the details). Denote by $\KLargeY{n}$ and $\KSmallY{n}$ the kernels of $\PhiLargeY{n}$ and $\PhiSmallY{n}$, respectively. Then the following diagram is commutative. Namely, its left triangle is commutative by the definition and the commutability of its right triangle can be shown in the same way as in Remark~\ref{remark_GL_diagram}. 
$$ 
\begin{picture}(0,120)
\put(0,95){%
\put(0,-2){\vector(0,-1){58}}%
\put(15,0){\vector(3,-2){35}}%
\put(-15,0){\vector(-3,-2){35}}%
\put(-11,5){$\si\Y$}%
\put(-75,-33){$\AlgSmallY{n}$}%
\put(50,-33){$\AlgLargeY$}%
\put(50,-40){\vector(-3,-2){35}}%
\put(-50,-40){\vector(3,-2){35}}%
\put(110,0){\vector(-3,-2){35}}%
\put(-110,0){\vector(3,-2){35}}%
\put(-6,-75){$R^{O(n)}$}%
\put(-125,5){$\KSmallY{n}$}%
\put(115,5){$\KLargeY{n}$}%
\put(3,-33){$\scriptstyle\PhiAbsY{n}$}%
\put(-40,-8){$\scriptstyle\piSmallY{n}$}%
\put(33,-8){$\scriptstyle\piLargeY$}%
\put(-35,-48){$\scriptstyle\PhiSmallY{n}$}%
\put(25,-48){$\scriptstyle\PhiLargeY{n}$}%
\put(-20,-90){\text{Diagram 4.}}%
}%
\end{picture}
$$%
\noindent{}Denote the kernels of surjective homomorphisms 
$$\PhiLargeY{n}\otimes\phi'_n:\AlgLargeY \otimes \FF\Y^{\#} \to \CY_n\;\text{ and }\;
\PhiSmallY{n}\otimes\phi'_n:\AlgSmallY{n}\otimes \FF\Y^{\#} \to \CY_n$$
by $\TLargeY{n}$ and $\TSmallY{n}$, respectively. These ideals are ideals of relations for $\CY_n$ in the corresponding free algebras. 

Let $\un{t}\in\NN_0^u$, $\un{r}\in\NN_0^v$, $\un{s}\in\NN_0^w$ ($u,v,w>0$) with $|\un{r}|=|\un{s}|$. We set $y_1=x_{u+1},\ldots,y_v=x_{u+v}$ and $z_1=x_{u+v+1},\ldots,z_w=x_{u+v+w}$. In order to define $\si_{\un{t};\un{r};\un{s}}(\un{x};\un{y};\un{z})\in \FF\Y$ for $\un{x}=(x_1,\ldots,x_u)$, $\un{y}=(y_1,\ldots,y_v)$, $\un{z}=(z_1,\ldots,z_w)$,  we consider the quiver (i.e., the oriented graph) $\Q=\Q(\un{x};\un{y};\un{z})$:
$$
\loopR{0}{0}{x_1,\ldots,x_u} %
\xymatrix@C=1cm@R=1cm{ %
\vtx{1}\ar@1@/^/@{<-}[rr]^{y_1,y_1^T,\ldots,y_v,y_v^T} &&\vtx{2}\ar@1@/^/@{<-}[ll]^{z_1,z_1^T,\ldots,z_w,z_w^T}\\
}%
\loopL{0}{0}{x_1^T,\ldots,x_u^T}\qquad\qquad,
$$
where there are $2v$ ($2w$, respectively) arrows from vertex $2$ to vertex $1$ (from $1$ to $2$, respectively) and there are $u$ loops in each of two vertices. By abuse of notation arrows of $\Q$ are denoted by letters from $\Y$. For an arrow $a$ denote by $a'$ its head and by $a''$ its tail. A sequence of arrows $a_1\cdots a_s$ of $\Q$ is a {\it path} of $\Q$ if $a_i''=a_{i+1}'$ for all $1\leq i< s$. The head of the path $a$ is $a'=a_1'$ and the tail is $a''=a_s''$. A path $a$ is {\it closed} if $a'=a''$. We introduce the following notations:
\begin{enumerate}
\item[$\bullet$] $\path{\Q}$ is the set of all (non-empty) paths in $\Q$;

\item[$\bullet$] $\LA\Q\RA\subset \Y$ is the semigroup (without unity) freely generated by closed paths in $\Q$ and $\LA\Q\RA^{\#}=\LA\Q\RA \sqcup\{1\}$; note that $\LA\Q\RA$ is closed with respect to the $\sim$-equivalence;

\item[$\bullet$] $\LA\widetilde{\Q}\RA = \EY \cap \LA\Q\RA$ is the set of maximal representatives of $\sim$-equivalence classes of primitive elements from $\LA\Q\RA$;

\item[$\bullet$] $\LA\ov{\Q}\RA$ is the set of all $\sim$-equivalence classes of primitive elements from $\LA\Q\RA$.    
\end{enumerate}%
Denote the multidegree of a monomial $a$ in arrows of $\Q$ by $\mdeg(a)=(\deg_{x_1}(a) + \deg_{x_1^T}(a),\ldots,\deg_{z_w}(a) + \deg_{z_w^T}(a))$.

%------Rem5.1--------------------------------------------------
\begin{remark}\label{remark_primitive}
If $a,b\in\LA\Q\RA$ and $a\sim b^l$ for $l>1$, then there exists a $c\in\LA\Q\RA$ such that $a=c^l$.
\end{remark}
\bigskip

Let $\Omega=\Omega(\un{t};\un{r};\un{s})$ be the set of multisets  
$$\omega=\{\underbrace{e_1,\ldots,e_1}_{k_1},\ldots,\underbrace{e_q,\ldots,e_q}_{k_q}\}_{m}
$$%
such that
\begin{enumerate}
\item[$\bullet$] $e_1,\ldots,e_q \in\LA\widetilde{\Q}\RA$ are pairwise different and $k_1,\ldots,k_q\in\NN$ ($q>0$); 

\item[$\bullet$] $k_1\mdeg(e_1)+\cdots+k_q\mdeg(e_q)=(\un{t},\un{r},\un{s})$.
\end{enumerate}
We set $\si(\omega)=(-1)^{\xi} \si_{k_1}(e_1)\cdots\si_{k_q}(e_q)$ for 
$$\xi=\sum_{i=1}^q k_i\left(\sum_{j=1}^v\deg_{y_j}{e_i}+\sum_{j=1}^w\deg_{z_j}{e_i}+1\right).$$  
Then we define the following element of $\FF\Y$:
\begin{eq}
\si_{\un{t};\un{r};\un{s}}(\un{x};\un{y};\un{z})= (-1)^{|\un{t}|}\!\!\! \sum_{\omega\in \Omega(\un{t};\un{r};\un{s})}  \si(\omega).
\end{eq}%
For empty $\Omega$ we set $\si_{\un{t};\un{r};\un{s}}(\un{x};\un{y};\un{z})=1$.
Given $a_i,b_j,c_k\in\FF\Y$, we define $\si_{\un{t};\un{r};\un{s}}(\un{a};\un{b};\un{c})$ as the result of the corresponding substitutions in $\si_{\un{t};\un{r};\un{s}}(\un{x};\un{y};\un{z})$. Note that 
\begin{enumerate}
 \item[$\bullet$] for $\un{r}=\un{s}=(0)$ we have $\si_{\un{t};\un{r};\un{s}}(\un{a};\un{b};\un{c})=\si_{\un{t}}(\un{a})$;
 \item[$\bullet$] for $\un{t}=(t)$ and $\un{r}=\un{s}=(r)$ we denote $\si_{t,r}(a,b,c)=\si_{\un{t};\un{r};\un{s}}(\un{a};\un{b};\un{c})$, where  $\un{a}=(a)$, $\un{b}=(b)$, $\un{c}=(c)$.
 \end{enumerate}
The element $\si_{t,r}(a,b,c)$ was introduced by Zubkov~\cite{ZubkovII}. Note that the definition from~\cite{ZubkovII} is different from our definition and their equivalence was established in Lemma~7.14 of~\cite{Lopatin_Orel}. More details can be found in Section~1.3 of~\cite{Lopatin_Orel}.    

As in Section~\ref{section2}, we define elements $F_t(\un{a})$, $P_{t,l}(b)$ of $\si\Y$, where $a_i,b\in\FF\Y$.

%------Rem5.2--------------------------------------------------
\begin{remark}\label{remark_O_notations}
Let $f\in\si\Y$. Taking the image of $f$ with respect to $\piLargeY$ ($\piSmallY{n}$, respectively), we can consider $f$ as an element of $\AlgLargeY$ ($\AlgSmallY{n}$, respectively). As an example, see formulations of Theorems~\ref{theo_O_main},~\ref{theo_Lopatin},~\ref{theo_O}. 
\end{remark}

%------Rem5.3--------------------------------------------------
\begin{remark}\label{remark_O_OK1}
Definition~4.1 together with Lemma 4.2 from~\cite{Lopatin_Orel} implies that $\si_{\un{t};\un{r};\un{s}}(\un{a};\un{b};\un{c})\in\AlgLargeY$ is a {\it partial linearization} of $\si_{t,r}(x,y,z)$, i.e., it is the coefficient of $\la_1^{t_1}\cdots \la_u^{t_u} \mu_1^{r_1}\cdots \mu_v^{r_v}\nu_1^{s_1}\cdots \nu_w^{s_w} $ in $\si_{t,r}(\la_1 a_1+\cdots+\la_u a_u, \mu_1 b_1+\cdots+\mu_v b_v, \nu_1 c_1+\cdots+\nu_w c_w)\in\AlgLargeY$ considered as a polynomial in $\la_1,\ldots,\la_u,\mu_1,\ldots,\mu_v,\nu_1,\ldots,\nu_w\in\FF$. 
\end{remark}

Note that $\si_{t,r}$ has certain symmetries. Namely,  
for $a,a_i,b,b_j,c,c_k\in\Y$ and $(a_1,\ldots,a_u)^T=(a_1^T,\ldots,a_u^T)$ we have 
\begin{eq}\label{eq_O_transpose}
\si_{\un{t};\un{r};\un{s}}(\un{a};\un{b};\un{c}) = \si_{\un{t};\un{r};\un{s}}(\un{a};\un{b}^T;\un{c}^T) = \si_{\un{t};\un{s};\un{r}}(\un{a}^T;\un{c};\un{b}) \;\text{ in }\; \AlgLargeY.
\end{eq}

Let $t,r\geq0$. To introduce $\chi_{t,r}$ and $\zeta_{t,r}$, analogues of the Cayley--Hamilton polynomial $\chi_t$ for the algebra $\CY_n$, we consider the quiver $\Q=\Q(x;y;z)$, where $x=x_1$, $y=x_2$, $z=x_3$, and 
\begin{enumerate}
\item[$\bullet$] denote by $L_{i,j}$ the set of pairwise different elements $e_1,\ldots,e_q\in \LA \Q\RA^{\#}$ with $e_1'=\cdots=e_q'=e_2''=\cdots=e_2''=1$ that satisfy $\mdeg(e_1)=\cdots=\mdeg(e_q)=(i,j,j)$; 
 
\item[$\bullet$] denote by $M_{i,j}$ the set of pairwise different paths $e_1,\ldots,e_q$ in $\Q$ with $e_1'=\cdots=e_q'=2$ and $e_1''=\cdots=e_q''=1$ that satisfy $\mdeg(e_1)=\cdots=\mdeg(e_q)=(i,j,j+1)$. 
\end{enumerate}
Then we consider the following elements of $\si\Y\otimes \FF\Y^{\#}$:
\begin{eq}\label{eq_O_chi}
\chi_{t,r}(x,y,z)=\sum_{i=0}^t \sum_{j=0}^r \si_{i,j}(x,y,z) \left( \sum_{e\in L_{t-i,r-j}} (-1)^{\xi}e \right),
\end{eq}
\begin{eq}\label{eq_O_zeta}
\zeta_{t,r}(x,y,z)=\sum_{i=0}^t \sum_{j=0}^r \si_{i,j}(x,y,z) \left( \sum_{e\in M_{t-i,r-j}} (-1)^{\xi}e \right),
\end{eq}%
where $\xi = i + \deg_{y}(e) + \deg_{z}(e)$. For short, here we have omitted $\otimes$. Note that $\chi_{0,0}(x,y,z)=1$, $\chi_{t,0}(x,y,z)=\chi_t(x)$, and $\zeta_{0,0}(x,y,z)=z^T-z$. For $a,b,c\in\FF\Y$ we define $\chi_{t,r}(a,b,c)$ and $\zeta_{t,r}(a,b,c)$ as the results of the corresponding substitutions. As in Remark~\ref{remark_O_notations}, we can consider $\chi_{t,r}(a,b,c)$ and $\zeta_{t,r}(a,b,c)$ as elements of $\AlgLargeY\otimes \FF\Y^{\#}$ as well as of $\AlgSmallY{n}\otimes \FF\Y^{\#}$. Connections between $\si_{t,r}$, $\chi_{t,r}$, $\zeta_{t,r}$ are given in Lemma~\ref{lemma_O_transpose} (see below).

%------Ex5.4--------------------------------------------------
\begin{example}\label{ex_54}
For $a,b,c\in\FF\Y$ and $\ov{b}=b-b^T$ the following equalities hold in $\si\Y$ and $\si\Y\otimes\FF\Y^{\#}$: 
\begin{enumerate}
 \item[$\bullet$] $\si_{0,1}(a,b,c) = -\tr(b\ov{c})$;
 
 \item[$\bullet$] $\si_{1,1}(a,b,c) = \tr(a\ov{b}\ov{c}) - \tr(a)\tr(b\ov{c})$;

 \item[$\bullet$] $\si_{0,2}(a,b,c) = \si_2(bc) + \si_2(bc^T) + \tr(bcbc^T) + \tr(bcb^Tc) - \tr(bcb^Tc^T) - \tr(bc)\tr(bc^T)$;

 \item[$\bullet$] $\chi_{0,1}(a,b,c) = \ov{b}\ov{c} - \tr(b\ov{c})$ and $\zeta_{1,0}(a,b,c) = - a^T\ov{c} - \ov{c}a + \tr(a)\ov{c}$; 
 
 \item[$\bullet$] $\chi_{1,1}(a,b,c) = a\ov{b}\ov{c} + \ov{b}a^T\ov{c} + \ov{b}\ov{c}a  - \tr(a)\ov{b}\ov{c} - \tr(b\ov{c})a - \tr(a\ov{b}\ov{c}) + \tr(a)\tr(b\ov{c})$;  
 
 \item[$\bullet$] $\zeta_{2,0}(a,b,c) =  - (a^T)^2\ov{c} - a^T\ov{c}a - \ov{c}a^2 + \tr(a)a^T\ov{c} + \tr(a)\ov{c}a - \si_2(a)\ov{c}$; 
 
  \item[$\bullet$] $\zeta_{0,1}(a,b,c) =  - \ov{c}\ov{b}\ov{c} + \tr(b\ov{c})\ov{c} $.
\end{enumerate}
\end{example}
\bigskip

The proof of the following theorem is given in Section~\ref{section7}.

%------Th5.5--------------------------------------------------
\begin{theo}\label{theo_O_main}
\begin{enumerate}
\item[1.] The ideal of relations $\TSmallY{n}$ for $\CY_n$ is generated by $\KSmallY{n}\otimes 1$ and 
\begin{enumerate}
\item[$\bullet$] $\chi_{t,r}(a,b,c)=0$ for $t+2r=n$; 
 
\item[$\bullet$] $\zeta_{t,r}(a,b,c)=0$ for $t+2r=n-1$; 
\end{enumerate}
where $a,b,c\in\FF\Y$.

\item[2.] The ideal of relations $\KSmallY{n}$ for $R^{O(n)}\simeq \AlgSmallY{n} / \KSmallY{n}$ is generated by 
\begin{enumerate}
\item[(a)] $\si_t(a+b)=F_t(a,b)$ for $1\leq t\leq n$, where $a,b\in\FF\Y$;  

\item[(b)] $\si_t(a^l)=P^{\pplus}_{t,l}(a)$ for $1\leq t\leq n$, $1<l\leq n$, where $a\in\Y$;

\item[(c)] $\si_t(ab)=\si_t(ba)$ for $1\leq t\leq n$, where $a,b\in\Y$;

\item[(d)] $\si_t(a)=\si_t(a^T)$ for $1\leq t\leq n$, where $a\in\Y$;

\item[(e)] $\si^{\pplus}_{\un{t};\un{r};\un{s}}(\un{a};\un{b};\un{c})=0$ for $n<|\un{t}|+2|\un{r}|\leq 2n$, where $\un{t}\in\NN_0^u$, $\un{r}\in\NN_0^v$, $\un{s}\in\NN_0^w$ ($u,v,w>0$) satisfy $|\un{r}|=|\un{s}|$  and $a_i,b_j,c_k\in\Y$ for all $i,j,k$.%
 \end{enumerate}
 \end{enumerate}
Moreover, we can assume that $\un{t}$, $\un{r}$, $\un{s}$ from relation~(e) satisfy condition~(\ref{eq_cond1}) and the vector $(\un{t},\un{r},\un{s})$ without zero entries satisfies conditions~(\ref{eq_cond2}) and~(\ref{eq_cond3}).

In particular, ideals $\TSmallY{n}$ and $\KSmallY{n}$ are finitely based. 
\end{theo}
\bigskip

Relations (a), (b), (c), (d) from Theorem~\ref{theo_O_main} are called {\it free} relations, because, being considered as elements of $\si\Y$, they belong to the kernel of $\PhiAbsY{n}$ for all $t\geq 1$, $l>1$ and do not depend on $n$.

%=====================================================================
%=====================================================================
%------Sec6--------------------------------------------------
\section{Large free algebra of $O(n)$-invariants}\label{section6}

We start this section with the known description of the ideal of relations $\KLargeY{n}$. We completed the proof of the following theorem in~\cite{Lopatin_free}, using results from~\cite{Lopatin_Orel} and the approach described in~\cite{ZubkovII}.

%------Th6.1--------------------------------------------------
\begin{theo}\label{theo_Lopatin} The ideal of relations $\KLargeY{n}$ for $R^{O(n)}\simeq \AlgLargeY / \KLargeY{n}$ is generated by $\si_{t,r}(a,b,c)=0$ for $t+2r>n$, $t,r\geq0$, and $a,b,c\in\FF\Y$.
\end{theo} 
\bigskip

The next lemma describes the large free algebra $\AlgLargeY$ as a quotient of the absolutely free algebra $\si\Y$. Its proof follows immediately from the proof of Lemma 3.1 from~\cite{Lopatin_free}.

%------L6.2--------------------------------------------------
\begin{lemma}\label{lemma_O_Lopatin} We have $\AlgLargeY\simeq \si\Y/ L'$ for the ideal $L'$ generated by
\begin{enumerate}
\item[(a)] $\si_t(a_1+\cdots+a_u)=F_{t}(a_1,\ldots,a_u)$, 

\item[(b)] $\si_t(a^l)=P_{t,l}(a)$,

\item[(c)] $\si_t(ab)=\si_t(ba)$, 

\item[(d)] $\si_t(a)=\si_t(a^T)$, 
\end{enumerate}
where $t>0$, $l,u>1$, $a_1,\ldots,a_u\in \FF\Y$, and $a,b\in \Y$. 
\end{lemma}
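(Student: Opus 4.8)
The plan is to mirror Donkin's argument for the $GL$-case, Lemma~\ref{lemma_GL_Donkin}, following the proof of Lemma~3.1 in~\cite{Lopatin_free}; the only genuinely new feature is the involution, which enters through relation~(d) and through the use of $\sim$-equivalence in place of $\stackrel{c}{\sim}$-equivalence. By definition $\AlgLargeY$ is the commutative polynomial algebra freely generated by the symbols $\si_t(b)$ with $t>0$ and $b\in\EY$. Since this algebra is free, sending each generator $\si_t(b)$ to the class $[\si_t(b)]\in\si\Y/L'$ determines a well-defined homomorphism $\psi\colon\AlgLargeY\to\si\Y/L'$ with no relations to check, and it suffices to prove that $\psi$ is bijective.

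For surjectivity I would show that, modulo $L'$, every generator $\si_t(a)$ of $\si\Y$ is a polynomial in the symbols $\si_j(b)$, $b\in\EY$. Writing $a$ as an $\FF$-linear combination of monomials of $\Y$, relation~(a) (Amitsur's formula) rewrites $\si_t(a)$ as a polynomial in the $\si_k$ of those monomials; relations~(c) and~(d) then replace each monomial by the maximal representative of its $\sim$-equivalence class, as $\sim$ is generated by cyclic permutations and the involution $^T$; and relation~(b) rewrites $\si_k(c^l)$, for a primitive $c$ and $l>1$, as the polynomial $P_{k,l}(c)$ in the $\si_j(c)$, reducing everything to symbols indexed by primitive maximal representatives, that is, by elements of $\EY$. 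Each of these steps is an application of one of the defining relations of $L'$, so the difference between $\si_t(a)$ and its reduced form lies in $L'$; this is exactly the statement that $\psi$ is onto. Termination of the procedure, and the compatibility of the power-reduction~(b) with the passage to $\sim$-classes, rest on Remark~\ref{remark_primitive}, which guarantees that an element that is an $l$-th power only up to $\sim$ is genuinely an $l$-th power, so that the involution never produces a non-primitive monomial outside the scope of~(b).

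For injectivity it is enough to prove that the classes $[\si_t(b)]$, $b\in\EY$, are algebraically independent in $\si\Y/L'$. Here I would use the fact that relations~(a)--(d) are genuine identities for the characteristic-polynomial coefficients of matrices of every size---Amitsur's formula, the power formula, cyclic invariance, and the equality $\si_t(A)=\si_t(A^T)$---so that each homomorphism $\PhiAbsY{n}\colon\si\Y\to R^{O(n)}$ annihilates $L'$ and descends to a map $\si\Y/L'\to R^{O(n)}$. Given any finite family of pairwise distinct $b\in\EY$ and any bound on the $t$, choosing $n$ large places the corresponding invariants $\si_t(\phi'_n(b))$ in the stable range of $R^{O(n)}$, where they are algebraically independent by the standard theory of matrix $O(n)$-invariants; a nontrivial polynomial relation among the $[\si_t(b)]$ in $\si\Y/L'$ would descend to such a relation in $R^{O(n)}$ for all $n$, a contradiction. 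Hence $\psi$ is injective, and therefore an isomorphism, and the asserted surjection $\piLargeY\colon\si\Y\to\AlgLargeY$ is the composite of the projection with $\psi^{-1}$. I expect the main obstacle to be the combinatorial bookkeeping of the reduction in the presence of the involution---ensuring that~(c) and~(d) together collapse each full $\sim$-class consistently and that this is compatible with the power-reduction~(b)---which is precisely the role of Remark~\ref{remark_primitive}; the algebraic-independence input, while essential, is the direct $O(n)$-analogue of the independence used for $\tr(X),\dots,\si_n(X)$ in the $GL$-case.
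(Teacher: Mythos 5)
Your proposal is correct and follows essentially the same route as the paper, which simply defers to the proof of Lemma~3.1 of~\cite{Lopatin_free}: a Donkin-style argument in which relations (a)--(d) reduce every generator $\si_t(a)$ to a polynomial in $\si_j(b)$ with $b\in\EY$ (surjectivity), while stable algebraic independence of the $\si_t(\phi'_n(b))$ in $R^{O(n)}$ for large $n$ gives injectivity. Your handling of the involution via $\sim$-equivalence and the compatibility of the power reduction with primitivity matches the intended argument.
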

\bigskip

In this section we prove the following theorem together with Remark~\ref{remark_theo_O}:

%------Th6.3--------------------------------------------------
\begin{theo}\label{theo_O}
The ideal of relations $\KLargeY{n}$ for $R^{O(n)}\simeq \AlgLargeY / \KLargeY{n}$ is generated by 
\begin{enumerate}
\item[$\bullet$] $\si_{t,r}(a,b,c)=0$, where $n<t+2r\leq 2n$, $t,r\geq0$, and $a,b,c\in\FF\Y$; 

\item[$\bullet$] $\si_t(b)=0$, where $t>2n$ and $b\in\EY$. 
\end{enumerate}
\end{theo}

%------Rem6.4--------------------------------------------------
\begin{remark}\label{remark_theo_O}
We can reformulate Theorem~\ref{theo_O} as follows: the ideal $\KLargeY{n}$ is generated by 
\begin{enumerate}
\item[$\bullet$] $\si_{\un{t};\un{r};\un{s}}(\un{a};\un{b};\un{c})=0$ for $n<|\un{t}|+2|\un{r}|\leq 2n$, where $\un{t}\in\NN_0^u$, $\un{r}\in\NN_0^v$, $\un{s}\in\NN_0^w$ ($u,v,w>0$) satisfy condition~(\ref{eq_cond1}), $|\un{r}|=|\un{s}|$,  and $a_i,b_j,c_k\in\Y$ for all $i,j,k$; moreover, the vector $(\un{t},\un{r},\un{s})$ without zero entries satisfies conditions~(\ref{eq_cond2}) and~(\ref{eq_cond3});

\item[$\bullet$] $\si_t(b)=0$, where $t>n$ and $b\in\EY$. 
\end{enumerate}
\end{remark}
\bigskip

We split the proof of Theorem~\ref{theo_O} and Remark~\ref{remark_theo_O} into several lemmas. Given $l\geq0$, we denote by $J_{l}$ the ideal of $\AlgLargeY$ generated by $\si_{t,r}(a,b,c)$ satisfying $l=t+2r$ and  $a,b,c\in\FF\Y$. Since the field $\FF$ is infinite, Remark~\ref{remark_O_OK1} implies that elements $\si_{\un{t};\un{r};\un{s}}(\un{a};\un{b};\un{c})$ generate the ideal $J_l$ for $l=|\un{t}|+2|\un{r}|$, where $|\un{r}|=|\un{s}|$ and $a_i,b_j,c_k\in\Y$ for all $i,j,k$.  We write $J_{l}^{(p)}$ for the $\FF$-subspace of $\AlgLargeY$ spanned by $\si_{\un{t};\un{r};\un{s}}(\un{a};\un{b};\un{c})$ for $t_i,r_j,s_k\in\{1,p,p^2,\ldots\}$ and $a_i,b_j,c_k\in\Y$ satisfying $l=|\un{t}|+2|\un{r}|$ and $|\un{r}|=|\un{s}|$. Note that the ideal $J_l$ is closed with respect to partial linearizations.

%------Rem6.5--------------------------------------------------
\begin{remark}\label{remark_O_OK2}
Remark~\ref{remark_O_OK1}, the definition of $\AlgLargeY$ and Lemma~\ref{lemma_O_Lopatin} imply that for $\un{k}\in\NN^{l}$ ($l>0$), $k=|\un{k}|$, $\un{e}=(e_1,\ldots,e_l)$ with $e_i\in\Y$ for all $i$, and a letter $x$ we have
\begin{enumerate}
 \item[$\bullet$] $\si_{\un{t},\un{k};\un{r};\un{s}}(\un{a},\un{e};\un{b};\un{c})\in\AlgLargeY$ is a partial linearization of $\si_{\un{t},k;\un{r};\un{s}}(\un{a},x;\un{b};\un{c})$, where $|\un{r}|=|\un{s}|$; 
 
 \item[$\bullet$] $\si_{\un{t};\un{r},\un{k};\un{s}}(\un{a};\un{b},\un{e};\un{c})\in\AlgLargeY$ is a partial linearization of $\si_{\un{t};\un{r},k;\un{s}}(\un{a};\un{b},x;\un{c})$, where $|\un{s}|=|\un{r}|+k$;
  
  \item[$\bullet$] $\si_{\un{t};\un{r};\un{s},\un{k}}(\un{a};\un{b};\un{c},\un{e})\in\AlgLargeY$ is a partial linearization of $\si_{\un{t};\un{r};\un{s},k}(\un{a};\un{b};\un{c},x)$, where $|\un{r}|=|\un{s}|+k$.
\end{enumerate}
\end{remark}
\bigskip

We will use the following Lemma~\ref{lemma_O_45} together with Remark~\ref{remark_O_45}:

%------L6.6--------------------------------------------------
\begin{lemma}\label{lemma_O_45}
Given $\un{t}=(t_1,\ldots,t_u)$ and $\un{a}=(a_1,\ldots,a_u)$, we write $\un{t}'$ for $(1^{t_1},t_2,\ldots,t_u)$ and $\un{a}^{(i)}$ for $(\underbrace{a_1,\ldots,a_1}_{i},a_2,\ldots,a_u)$. Then in case $\FF=\QQ$ we have the following equalities
$$\si_{\un{t};\un{r};\un{s}}(\un{a};\un{b};\un{c})= 
\frac{1}{t_1!} \si_{\un{t}';\un{r};\un{s}}(\un{a}^{(t_1)};\un{b};\un{c}) = 
\frac{1}{r_1!} \si_{\un{t};\un{r}';\un{s}}(\un{a};\un{b}^{(r_1)};\un{c}) =
\frac{1}{s_1!} \si_{\un{t};\un{r};\un{s}'}(\un{a};\un{b};\un{c}^{(s_1)})$$
in $\AlgLargeY$.
\end{lemma}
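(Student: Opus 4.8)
The plan is to transcribe the proof of the $GL(n)$ analogue, Lemma~\ref{lemma_GL_OK2}, to the present setting, replacing the partial-linearization description of Remark~\ref{remark_GL_OK1} by its $O(n)$ counterparts in Remark~\ref{remark_O_OK1} and Remark~\ref{remark_O_OK2}. The three asserted equalities are symmetric across the three ``slots'' $\un{t}/\un{a}$, $\un{r}/\un{b}$, $\un{s}/\un{c}$, so I would establish the first one in detail and then obtain the remaining two by the identical computation applied to the $\un{y}$- and $\un{z}$-variables. The two ingredients I will use repeatedly are: the multihomogeneity of $\si_{\un{t};\un{r};\un{s}}(\un{a};\un{b};\un{c})$, which is immediate from the defining multisets $\Omega(\un{t};\un{r};\un{s})$ (their condition $\sum k_i\mdeg(e_i)=(\un{t},\un{r},\un{s})$ forces multidegree $(\un{t},\un{r},\un{s})$), and the nesting of partial linearizations recorded in Remark~\ref{remark_O_OK2}.

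For the first equality I would argue as follows. By Remark~\ref{remark_O_OK1} and Remark~\ref{remark_O_OK2}, the element $\si_{\un{t}';\un{r};\un{s}}(\un{a}^{(t_1)};\un{b};\un{c})$ is the partial linearization of $\si_{\un{t};\un{r};\un{s}}(\un{a};\un{b};\un{c})$ that splits the first variable of exponent $t_1$ into $t_1$ variables of exponent $1$; concretely it equals the coefficient of $\la_1\cdots\la_{t_1}$ in $\si_{\un{t};\un{r};\un{s}}(\la_1 a_1+\cdots+\la_{t_1}a_1,a_2,\ldots,a_u;\un{b};\un{c})\in\AlgLargeY$, where $\la_1,\ldots,\la_{t_1}\in\FF$. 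Since $\si_{\un{t};\un{r};\un{s}}$ is homogeneous of degree $t_1$ in its first argument $a_1$ (counting $a_1$ and $a_1^T$ together, both of which scale by the same scalar), this substitution equals $(\la_1+\cdots+\la_{t_1})^{t_1}\,\si_{\un{t};\un{r};\un{s}}(\un{a};\un{b};\un{c})$, and the coefficient of $\la_1\cdots\la_{t_1}$ in $(\la_1+\cdots+\la_{t_1})^{t_1}$ is the multinomial coefficient $t_1!$. Hence $\si_{\un{t}';\un{r};\un{s}}(\un{a}^{(t_1)};\un{b};\un{c})=t_1!\,\si_{\un{t};\un{r};\un{s}}(\un{a};\un{b};\un{c})$, and since $\FF=\QQ$ I may divide by $t_1!$ to conclude.

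For the remaining two equalities I would repeat the computation verbatim, now splitting the first $\un{y}$-variable $b_1$ of exponent $r_1$ (using the second bullet of Remark~\ref{remark_O_OK2}) and the first $\un{z}$-variable $c_1$ of exponent $s_1$ (using the third bullet), which produce the factors $r_1!$ and $s_1!$; alternatively, the symmetry relations~(\ref{eq_O_transpose}) reduce the $\un{z}$-slot to the $\un{y}$-slot after transposition. The one point genuinely requiring attention is that linearizing the $\un{y}$- or $\un{z}$-slot must keep us inside the class of elements $\si_{\un{t};\un{r};\un{s}}$ obeying the balance condition $|\un{r}|=|\un{s}|$ built into the definition; this is automatic, because replacing $r_1$ by $1^{r_1}$ leaves $|\un{r}|$ unchanged (and likewise for $s_1$), so the constraint is preserved throughout the splitting. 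Beyond this bookkeeping I expect no real obstacle: the statement is a direct analogue of Lemma~\ref{lemma_GL_OK2}, and the hypothesis $\FF=\QQ$ enters only to make the factorials invertible.
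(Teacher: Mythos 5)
Your proof is correct and follows essentially the same route as the paper, which proves this lemma precisely by running the argument of Lemma~\ref{lemma_GL_OK2} with Remark~\ref{remark_O_OK2} in place of Remark~\ref{remark_GL_OK1}: the splitting substitution, the homogeneity giving the factor $(\la_1+\cdots+\la_{t_1})^{t_1}$, and the multinomial coefficient $t_1!$ are exactly the intended steps. Your additional checks (that $a_1$ and $a_1^T$ scale together, and that the balance condition $|\un{r}|=|\un{s}|$ is preserved under splitting) are correct and harmless refinements of the same argument.
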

\begin{proof}
Applying Remark~\ref{remark_O_OK2} instead of Remark~\ref{remark_GL_OK1}, we obtain the claim in the same way as we proved Lemma~\ref{lemma_GL_OK2}. 
\end{proof}

%------Rem6.7--------------------------------------------------
\begin{remark}\label{remark_O_45}
Let $\algA_{\FF}=\AlgLargeY$. We write $\algA_{\ZZ}$ for the set of all $f\in\algA_{\QQ}$ with integer coefficients. 
The natural surjective map $\ZZ\to \ZZ_p\subset \FF$ induces the well-defined homomorphism of rings $\algA_{\ZZ}\to \algA_{\FF}$. Here in case $p=0$ we assume that $\ZZ_p=\ZZ$.
\end{remark}
\bigskip

Similarly to a finite quiver $\Q(\un{x};\un{y};\un{z})$ from Section~\ref{section5} we can consider a quiver $\Q(a_1,a_2,\ldots;b_1,b_2,\ldots;c_1,c_2,\ldots)$ with infinitely many arrows, where $a_i,b_j,c_k\in\{x_1,x_2,\ldots\}$ are pairwise different letters for all $i,j,k>0$. Note that by path in a quiver we always mean a finite path.   

Consider a quiver $\Q_{I}$ and a quiver $\Q_{II}$ with infinitely many arrows:
$$\Q_{I}=\Q(x_0,x;y;z) \;\text{ and }\; \Q_{II}=\Q(x,e_i; y, u_i, v_i, w_{ij}; z)_{i,j>0}.$$

%------L6.8--------------------------------------------------
\begin{lemma}\label{lemma_O_sets1}
Define a homomorphism $\varphi:\LA \Q_{II}\RA \to \LA \Q_{I}\RA$ of semigroups as follows: 
$$\varphi(a^T)=\varphi(a)^T
\;\text{ and }\;
\varphi(a)=\left\{
\begin{array}{cl}
a,& \text{ if }a=x \text{ or } a=y \text{ or } a=z\\
x_0^ix ,& \text{ if } a=e_i\\
x_0^iy ,& \text{ if } a=u_i\\
x_0^iy^T ,& \text{ if } a=v_i^T\\
x_0^iy (x_0^T)^j ,& \text{ if } a=w_{ij}\\
\end{array}
\right.$$%
for an arrow $a$ of $\Q_{II}$ and $i,j>0$. We extend $\varphi$ to the map $\LA\Q_{II}\RA \sqcup\{x_0\} \to \LA\Q_{I}\RA$ by setting $\varphi(x_0)=x_0$. Then $\varphi$ induces the well-defined bijection $\ov{\varphi}:\LA \ov{\Q}_{II}\RA\sqcup\{\ov{x_0}\} \to \LA \ov{\Q}_{I}\RA$ of sets of $\sim$-equivalence classes of primitive elements.
\end{lemma}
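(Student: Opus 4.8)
The plan is to imitate the proof of Lemma~\ref{lemma_GL_sets}: read a closed path of $\Q_I$ as a cyclic word, group every occurrence of the distinguished loop $x_0$ into one of the ``composite'' arrows of $\Q_{II}$, and check that this parsing is unique up to $\sim$-equivalence. First I would record that $\varphi$ is well defined: it sends a vertex-$1$ loop to a closed walk in the loops $x_0,x$ at vertex $1$, a $y$-type arrow ($2\to1$) to a $2\to1$ walk of shape $x_0^{\,i}\,y\,(x_0^T)^{\,j}$, and a $z$-type arrow to itself, and it commutes with ${}^T$ by construction. Hence it preserves heads and tails of arrows, carries $\LA\Q_{II}\RA$ into $\LA\Q_I\RA$, and descends to a map $\ov{\varphi}$ on $\sim$-classes.

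Next I would set up the normal form. Going around the cycle, any closed path of $\Q_I$ decomposes into an alternating sequence of crossings ($z,z^T$ for $1\to2$ and $y,y^T$ for $2\to1$) separated by loop words: words in $x_0,x$ along vertex $1$ and words in $x_0^T,x^T$ along vertex $2$. A vertex-$1$ loop word sits between a $z$-crossing and a $y$-crossing; I would group each maximal $x_0$-run with the $x$ that follows it into a block $x_0^{\,i}x=\varphi(e_i)$, exactly as in Lemma~\ref{lemma_GL_sets}, and absorb the remaining $x_0$-run, the one adjacent to the $y$-crossing, into that crossing via $\varphi(u_i)=x_0^{\,i}y$ or $\varphi(w_{ij})=x_0^{\,i}y(x_0^T)^{\,j}$. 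Applying ${}^T$ and using $\varphi(e_i^T)=x^T(x_0^T)^{\,i}$, a vertex-$2$ loop word is grouped symmetrically by $e_i^T$, each $x_0^T$-run taken with the preceding $x^T$, the run next to the $y$-crossing being swallowed by $v_i$ or $w_{ij}$. The point is that $y$-crossings may absorb $x_0$'s on their vertex-$1$ side and $x_0^T$'s on their vertex-$2$ side, whereas the asymmetric grouping $x_0^{\,i}x=\varphi(e_i)$ versus $x^T(x_0^T)^{\,i}=\varphi(e_i^T)$ guarantees that no $x_0$ is ever stranded against a $z$-crossing. Consequently every closed path of $\Q_I$ containing an $x$ or a crossing lies in $\varphi(\LA\Q_{II}\RA)$, while a closed path with neither is a power of $x_0$ (or of $x_0^T\sim x_0$); by Remark~\ref{remark_primitive} its primitive representative is the single class $\ov{x_0}$, which I adjoin to the source. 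This gives surjectivity of $\ov{\varphi}$ onto $\LA\ov{\Q}_{I}\RA$.

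For injectivity I would argue that the parsing is reversible: from a path in the image one recovers the $\Q_{II}$-letters unambiguously by reading off each $x_0$-run together with the unique $x$ or $y$-crossing to which it is attached, so $\varphi$ is injective on reduced representatives and carries primitive elements to primitive elements (an equality $\varphi(a)=c^{l}$ forces, via uniqueness of the parse and Remark~\ref{remark_primitive}, a factorization $a=b^{l}$). Combined with the ${}^T$-equivariance of $\varphi$, this shows $\ov{\varphi}\colon\LA\ov{\Q}_{II}\RA\sqcup\{\ov{x_0}\}\to\LA\ov{\Q}_{I}\RA$ is a bijection. I expect the main obstacle to be exactly this uniqueness-of-parse step: one must verify that cyclic rotation and transposition of a $\Q_I$-path never yield two genuinely different groupings of the $x_0$-runs, i.e. that the boundary between ``$e$-absorbed'' and ``crossing-absorbed'' $x_0$'s is intrinsic to the $\sim$-class. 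The bookkeeping is delicate precisely because $y$-crossings absorb $x_0$'s while $z$-crossings do not.
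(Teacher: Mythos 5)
Your proposal matches the paper's proof: the paper formalizes your ``crossing units'' as the sets $\Omega_I$, $\Omega_{II}$ of paths carrying a single terminal $z^{\pm}$, establishes the bijection there by exactly your normal form (each $x_0$-run grouped with the following $x$ at vertex $1$, with the preceding $x^T$ at vertex $2$, or with the adjacent $y$-crossing), and then deduces $a\sim b\Leftrightarrow\varphi(a)\sim\varphi(b)$, surjectivity, and preservation of primitivity by cutting every closed path at its $z$-type arrows. The uniqueness-of-parse issue you flag is resolved exactly as you suspect: the $z$'s are never absorbed, so they give intrinsic cut points, and a cyclic equivalence of images forces a cyclic permutation of the $\Omega$-blocks, to which the block-level bijection applies.
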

\begin{proof}We split the proof into several statements. It is not difficult to see that $\varphi:\LA \Q_{II}\RA \to \LA \Q_{I}\RA$ is well-defined. We can extend $\varphi$ to the homomorphism $\path{\Q_{II}}\to\path{\Q_{I}}$, which we also denote by $\varphi$.

Denote by $\Omega_I$ the set of such elements $az,az^T\in\LA\Q_{I}\RA$ that a path $a$ satisfies $\deg_{z}(a)+\deg_{z^T}(a)=0$. In the same way we define $\Omega_{II}$. We claim that
\begin{eq}\label{eq_statement1}
 \varphi:\Omega_{II}\to\Omega_I \text{ is a bijection}.
\end{eq}%
An arbitrary element of $\Omega_I$ can be written as $ab^{\de}\!cz^{\eta}$, where $\de,\eta\in\{1,T\}$ and 
\begin{enumerate}
 \item[$\bullet$] $a=x^{j_0} x_0^{i_1} x^{j_1}\cdots x_0^{i_r} x^{j_r}$ for $j_0\geq0$, $i_1,j_1,\ldots,i_r,j_r>0$, $r\geq0$; in particular, $a=1$ in case $j_0=r=0$; 
 
 \item[$\bullet$] $b=x_0^i y (x_0^T)^j$ for $i,j\geq0$;
 
 \item[$\bullet$] $c=(x^T)^{k_1} (x_0^T)^{l_1}\cdots (x^T)^{k_s} (x_0^T)^{l_s}(x^T)^{k_{s+1}}$ for $k_1,l_1,\ldots,k_{s},l_{s}>0$, $k_{s+1}\geq0$, $s\geq0$; in particular, $c=1$ in case $s=k_1=0$.
\end{enumerate}%
Elements $a,b,c$ have unique preimages with respect to $\varphi$, namely, 
$$\varphi^{-1}(b)=\left\{
\begin{array}{cl}
y,& \text{ if }i=j=0\\
u_i ,& \text{ if } i>0,\, j=0\\
v_j ,& \text{ if } i=0,\, j>0\\
w_{ij} ,& \text{ if } i>0,\, j>0\\
\end{array}
\right.,$$%
$\varphi^{-1}(a) = x^{j_0} e_{i_1} x^{j_1-1}\cdots e_{i_r} x^{j_r-1}$, 
$\varphi^{-1}(c) = (x^T)^{k_1-1} e_{l_1}^T \cdots (x^T)^{k_s-1} e_{l_s}^T (x^T)^{k_{s+1}}$ for $a\neq1$ and $c\neq1$, respectively. Thus it is not difficult to see that $ab^{\de}\!cz^{\eta}$ has a unique preimage $\varphi^{-1}(ab^{\de}\!cz^{\eta})=\varphi^{-1}(a)(\varphi^{-1}(b))^{\de}\varphi^{-1}(c)z^{\eta}$. Statement~(\ref{eq_statement1}) is proven.

Consider paths $a,b\in\LA\Q_{II}\RA$. Then we claim that 
\begin{eq}\label{eq_statement2}
 a\sim b\text{ if and only if }\varphi(a)\sim\varphi(b).
\end{eq}%
Since $\varphi$ is a homomorphism, then $\varphi(a)\sim\varphi(b)$ follows from $a\sim b$.

Let $\varphi(a)\sim\varphi(b)$. Denote $\deg_z(a)+\deg_{z^T}(a)=\deg_z(b)+\deg_{z^T}(b)=r$. Note that if $r=0$, then Lemma~\ref{lemma_GL_sets} implies $a\sim b$. So we assume that $r>0$. Then $a\stackrel{c}{\sim} a_1\cdots a_r$ and $b\stackrel{c}{\sim} b_1\cdots b_r$ for $a_i,b_i\in\Omega_{II}$ ($1\leq i\leq r$).  

Assume $\varphi(a)\stackrel{c}{\sim}\varphi(b)$. Since $\varphi(a_i),\varphi(b_i)\in\Omega_I$ for all $i$, there is a cyclic permutation $\pi=(1,2,\ldots,r)^l\in S_r$ for some $l>0$ such that $\varphi(a_i)=\varphi(b_{\pi(i)})$. Statement~(\ref{eq_statement1}) implies $a_i=b_{\pi(i)}$. Therefore, $a\stackrel{c}{\sim}b$. If $\varphi(a)\stackrel{c}{\sim}\varphi(b)^T$, then $\varphi(a)\stackrel{c}{\sim}\varphi(b^T)$ and $a\sim b$ follows from the proven part of statement~(\ref{eq_statement2}).

Let $b\in\LA\Q_{I}\RA$ and $b\neq x_0^l$ for all $l>0$. Then we claim that 
\begin{eq}\label{eq_statement4}
\text{there exists an } a\in\LA\Q_{II}\RA \text{ satisfying }\varphi(a)\sim b.
\end{eq}%
Denote $\deg_z(b)+\deg_{z^T}(b)=r$. Note that if $r=0$, then Lemma~\ref{lemma_GL_sets} implies that $\varphi(a)=b$ for some $a\in\LA \Q_{II}\RA$. So we assume that $r>0$. Then $b\stackrel{c}{\sim} b_1\cdots b_r$ for $b_i\in\Omega_{I}$ ($1\leq i\leq r$). By statement~(\ref{eq_statement1}), there are $a_1,\ldots,a_r\in\LA \Q_{II}\RA$ such that $\varphi(a_i)=b_i$. Hence $\varphi(a_1\cdots a_r)\stackrel{c}{\sim} b$ and statement~(\ref{eq_statement4}) is proven. 

Consider $a\in\LA\Q_{II}\RA$. We claim that 
\begin{eq}\label{eq_statement5}
a \text{ is primitive if and only if } \varphi(a) \text{ is primitive}.
\end{eq}%
Let $\varphi(a)=b^l$ for $b\in\LA\Q_{I}\RA$ and $l>1$. If $\deg_z(a)+\deg_{z^T}(a)=0$, then the claim follows from Lemma~\ref{lemma_GL_sets}. Otherwise, $a\stackrel{c}{\sim} a_1\cdots a_r$ for $a_i\in\Omega_{II}$ and $r>0$. Hence, we obtain $b^l\stackrel{c}{\sim} \varphi(a_1)\cdots\varphi(a_r)$. By Remark~\ref{remark_primitive},  $\varphi(a_1)\cdots\varphi(a_r)=c^l$ for some $c\in\LA\Q_{I}\RA$. Since the last letter of $\varphi(a_r)$ is $z$ or $z^T$, the last letter of $c$ is also $z$ or $z^T$. Using the definition of $\Omega_{I}$ and the fact that $\varphi(a_i)\in\Omega_I$ for all $i$, we obtain that $c=\varphi(a_1)\cdots \varphi(a_s)$ for some $1\leq s<r$. Hence $\varphi(a_1)\cdots \varphi(a_r)=(\varphi(a_1)\cdots \varphi(a_s))^l$. In other words, $\varphi(a_i)=\varphi(a_{i+s})=\cdots=\varphi(a_{i+(l-1)s})$ for $1\leq i\leq s$. Statement~(\ref{eq_statement1}) implies $a_i=a_{i+s}=\cdots=a_{i+(l-1)s}$ for $1\leq i\leq s$, i.e., $a_1\cdots a_r=(a_1\cdots a_s)^l$. By Remark~\ref{remark_primitive}, $a$ is not primitive.  The converse claim is trivial.  

Now we can complete the proof of the theorem. Note that for $a\in\LA\Q_{II}\RA\sqcup\{x_0\}$ we have $\varphi(a)=x_0^l$ if and only if $l=1$ and $a=x_0$. By statements~(\ref{eq_statement2}) and (\ref{eq_statement5}), $\ov{\varphi}$ is a well-defined injective map. Statement~(\ref{eq_statement4}) implies that $\ov{\varphi}$ is a surjective map. 
\end{proof}

%------L6.9--------------------------------------------------
\begin{lemma}\label{lemma_O_key1}
Given $x_0,x,y,z\in\Y$ and $k,t,r\geq0$, we have $\si_{k,t;r;r}(x_0,x;y;z)\in J_{t+2r}$. 
\end{lemma}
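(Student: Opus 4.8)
This statement is the $O(n)$-counterpart of Lemma~\ref{lemma_GL_key}, and Lemma~\ref{lemma_O_sets1} is the $O(n)$-counterpart of Lemma~\ref{lemma_GL_sets}; accordingly I would follow the proof of Lemma~\ref{lemma_GL_key} in spirit, replacing the bijection of Lemma~\ref{lemma_GL_sets} by the bijection $\ov\varphi$ of Lemma~\ref{lemma_O_sets1}. All computations take place in $\AlgLargeY$. Applying the definition of $\si_{\un{t};\un{r};\un{s}}$ to $\un{t}=(k,t)$, $\un{r}=\un{s}=(r)$ and the quiver $\Q_I=\Q(x_0,x;y;z)$ gives
$$\si_{k,t;r;r}(x_0,x;y;z) = (-1)^{k+t}\sum_{\omega \in \ov\Omega_I}\si(\omega),$$
where $\ov\Omega_I$ is the set of finite multisubsets $\omega$ of $\sim$-classes of primitive closed paths of $\Q_I$ with $\mdeg(\omega)=(k,t,r,r)$.

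Next I would transport this sum to the quiver $\Q_{II}$. Writing $\ov\Upsilon_I$, $\ov\Upsilon_{II}$ for the sets of finite multisubsets of $\LA\ov\Q_I\RA$ and of $\LA\ov\Q_{II}\RA\sqcup\{\ov{x_0}\}$, Lemma~\ref{lemma_O_sets1} extends to a bijection of multisets $\ov\varphi\colon\ov\Upsilon_{II}\to\ov\Upsilon_I$, which restricts to a bijection $\ov\Omega_{II}\simeq\ov\Omega_I$ for $\ov\Omega_{II}=\{\omega\in\ov\Upsilon_{II}:\mdeg(\ov\varphi(\omega))=(k,t,r,r)\}$. Since $\ov\varphi(\ov{x_0})=\ov{x_0}$ and $\varphi$ sends every arrow of $\Q_{II}$ to a $\Q_I$-path carrying exactly one occurrence of $x$, of $y$, or of $z$ (or one transpose thereof), splitting each $\omega\in\ov\Omega_{II}$ into its $\ov{x_0}$-part of multiplicity $\al_0$ and its remaining part $\omega'$ yields $\si(\ov\varphi(\omega))=\pm\,\si_{\al_0}(x_0)\,\si(\ov\varphi(\omega'))$. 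Grouping the $\omega$ by the multidegree $\Delta$ of their image in the $\Q_{II}$-variables, exactly as in~(\ref{eq2_lemma_GL_key}), and summing, I expect a key formula of the shape
$$\si_{k,t;r;r}(x_0,x;y;z) = \sum \pm\,\si_{\al_0}(x_0)\,\si_{\un{t}';\un{r}';\un{s}'}\big(x,x_0x,\dots;\,y,x_0y,yx_0^T,x_0yx_0^T,\dots;\,z\big),$$
where the three slots are filled by the $\varphi$-images of $x,e_i$; of $y,u_i,v_i,w_{ij}$; and of $z$, and where the second factor is a partial linearization of $\si_{t,r}$ with $|\un{t}'|=t$ and $|\un{r}'|=|\un{s}'|=r$ (the equality $|\un{r}'|=|\un{s}'|$ being forced because every pullback is a closed path of $\Q_{II}$, so its numbers of $y$-type and $z$-type arrows coincide).

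Since $|\un{t}'|+2|\un{r}'|=t+2r$ and $|\un{r}'|=|\un{s}'|$, Remark~\ref{remark_O_OK1} shows that each factor $\si_{\un{t}';\un{r}';\un{s}'}(\dots)$ lies in $J_{t+2r}$, whence the whole sum lies in $J_{t+2r}$, which is the assertion.

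The main obstacle is the sign bookkeeping. Unlike the $GL$ case, the sign $(-1)^{\xi}$ attached to a multiset depends on the $y$- and $z$-degrees of its members, so I must verify that under $\ov\varphi$ these signs recombine precisely into $\pm\,\si_{\al_0}(x_0)$ times the correctly signed partial linearization $\si_{\un{t}';\un{r}';\un{s}'}$. This hinges on the feature built into the definition of $\varphi$ in Lemma~\ref{lemma_O_sets1}, namely that each $y$-type arrow maps to a path with a single $y$ (never a $y^T$) and $z\mapsto z$, so the degree counts governing $\xi$ are preserved across $\varphi$; since the resulting identity has integer coefficients $\pm1$, Remark~\ref{remark_O_45} guarantees its validity in every characteristic. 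A small extra point absent from the $GL$ setting is that the pulled-out $x_0$-factor a priori mixes the loops $x_0$ and $x_0^T$ based at the two vertices of $\Q_I$; but since $x_0\sim x_0^T$ the unique pure-$x_0$ primitive class is $\ov{x_0}$, so this factor is genuinely a single $\si_{\al_0}(x_0)$, as used above.
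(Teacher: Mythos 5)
Your proposal is correct and follows essentially the same route as the paper: transport the defining sum from $\Q_I$ to $\Q_{II}$ via the bijection $\ov{\varphi}$ of Lemma~\ref{lemma_O_sets1}, split off the $\si_{\al_0}(x_0)$ factor, group by multidegree $\Delta$, and observe that each remaining factor is a generator of $J_{t+2r}$. The sign issue you flag is resolved exactly as you anticipate, by the degree count $\deg_y(\varphi(c))+\deg_z(\varphi(c))=\deg_y(c)+\sum_i\deg_{u_i}(c)+\sum_i\deg_{v_i}(c)+\sum_{i,j}\deg_{w_{ij}}(c)+\deg_z(c)$, which gives $\si(\varphi(\omega))=\varphi(\si(\omega))$.
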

\begin{proof}
We work in $\AlgLargeY$. Without loss of generality we can assume that $x_0,x,y,z\in\{x_1,x_2,\ldots\}$ are pairwise different letters. Assume that $e_i, y, u_i, v_i, w_{ij}, z$ ($i,j>0$) are pairwise different letters from $\{x_1,x_2,\ldots\}\backslash \{x_0,x,y,z\}$. In what follows, we use notations from Lemma~\ref{lemma_O_sets1}.  Let $\Upsilon_I$ be the set of finite multisubsets of $\LA\Q_I\RA$ and $\Upsilon_{II}$ be the set of finite multisubsets of $\LA\Q_{II}\RA \sqcup\{x_0\}$.  We define the $\sim$-equivalence on $\Upsilon_I$ naturally and denote by $\ov{\Upsilon}_I$ the set of all $\sim$-equivalence classes. Similarly we define $\ov{\Upsilon}_{II}$. Then Lemma~\ref{lemma_O_sets1} implies that $\ov{\varphi}:\ov{\Upsilon}_{II}\to \ov{\Upsilon}_I$ is a bijection. 

Let use recall that the definition of $\Omega(\un{t};\un{r};\un{s})$ was given in Section~\ref{section5}.  Assume that $\omega$ belongs to $\ov{\Upsilon}_I$ or $\ov{\Upsilon}_{II}$. Since we work in $\AlgLargeY$, the element $\si(\omega)$ is well-defined. For short, we write $\mdeg(\omega)$ for $\mdeg(\si(\omega))$. We refer to the entries of $\Delta=\mdeg(\omega)$ as follows:
\begin{enumerate}
 \item[$\bullet$] $\De=(\al_0,\al; \be;\ga)$ for $\omega\in\ov{\Upsilon}_I$, where $\De=\mdeg(x_0^{\al_0}x^{\al}  y^{\be}  z^{\ga})$;
 
 \item[$\bullet$] $\De=(\al_0,\al,\al_i; \be,\la_i,\mu_i,\nu_{ij};\ga)_{i,j>0}$ for $\omega\in\ov{\Upsilon}_{II}$, where $\De$ is equal to 
$$\mdeg(x_0^{\al_0}x^{\al} y^{\be} z^{\ga} \prod_i x_i^{\al_i} u_i^{\la_i} v_i^{\mu_i} \prod_{j} w_{ij}^{\nu_{ij}} ).$$ 
Here we assume that only finitely many elements from $\{\al_i,\la_i,\mu_i,\nu_{ij}\}_{i,j>0}$ are non-zero.  
\end{enumerate}%  
In the first case (the second case, respectively) we say that $\De$ is a multidegree of type I (type II, respectively). By the definition, 
\begin{eq}\label{eq1_lemma_O_key1}
 \si_{k,t;r;r}(x_0,x;y;z)=(-1)^{k+t} \sum_{\omega\in \ov{\Omega}_I} \si(\omega),
\end{eq}%
where $\ov{\Omega}_I=\ov{\Omega}(k,t;r;r)=\{\omega\in\ov{\Upsilon}_{I}\,|\,\mdeg(\omega)=(k,t;r;r)\}$. For $\ov{\Omega}_{II}=\{\omega\in\ov{\Upsilon}_{II}\,|\,\mdeg(\varphi(\omega))=(k,t;r;r)\}$ an isomorphism of sets $\ov{\Omega}_{II}\simeq \ov{\Omega}_I$ is determined by the restriction of $\ov{\varphi}$. 

Given a multidegree $\De$ of type II, we denote $\ov{\Omega}_{II}^{\De}=\{\omega\in\ov{\Upsilon}_{II}\,|\,\mdeg(\omega)=\De\}$ and $\varphi(\De)=\mdeg(\varphi(x_0^{\al_0}x^{\al} y^{\be} z^{\ga} \prod_i x_i^{\al_i} u_i^{\la_i} v_i^{\mu_i} \prod_{j} w_{ij}^{\nu_{ij}}))$. Thus 
\begin{eq}\label{eq2_lemma_O_key1}
 \ov{\Omega}_{II}=\bigsqcup \ov{\Omega}_{II}^{\Delta}, 
\end{eq}%
where the union ranges over $\Delta$ of type II satisfying $\varphi(\Delta)=(k,t;r;r)$. Consequently applying formula~(\ref{eq1_lemma_O_key1}), the isomorphism $\ov{\Omega}_{II}\simeq \ov{\Omega}_I$, and formula~(\ref{eq2_lemma_O_key1}) we obtain
$$\si_{k,t;r;r}(x_0,x;y;z)=(-1)^{k+t}\!\!\!\! \sum_{\varphi(\Delta)=(k,t;r;r)}\; \sum_{\omega\in\ov{\Omega}_{II}^{\Delta} } \si(\varphi(\omega)).$$%
Since 
$$\deg_y(\varphi(c))+\deg_z(\varphi(c)) = \deg_y(c) + \sum_i\deg_{u_i}(c) + \sum_i\deg_{v_i}(c) + \sum_{ij}\deg_{w_{ij}}(c) + \deg_z(c)$$% 
for all $c\in\LA \Q_{II}\RA$, we have $\si(\varphi(\omega))=\varphi(\si(\omega))$ for $\omega\in\ov{\Omega}_{II}^{\Delta}$. Therefore, 
$$\sum_{\omega\in\ov{\Omega}_{II}^{\Delta} } \si(\varphi(\omega))=(-1)^{\al_0+|\Delta'|} \si_{\al_0}(x_0)\,  \varphi( \si_{\Delta'}(x,e_i; y, u_i, v_i, w_{ij}; z)_{i,j>0}),$$%
where $\Delta'$ stands for $(\al,\al_i; \be,\la_i,\mu_i,\nu_{ij};\ga)_{i,j>0}$. 
The condition $\varphi(\Delta)=(k,t;r;r)$ implies $|\Delta'|=t+2r$. Thus, 
\begin{eq}\label{eq3_lemma_O_key1}
\begin{array}{c}
\si_{k,t;r;r}(x_0,x;y;z) \\
=\sum\limits(-1)^{\al_0+k}\si_{\al_0}(x_0)\, \si_{\Delta'}(x,x_0^i x; y, x_0^i y, y (x_0^T)^i, x_0^i y (x_0^T)^j; z)_{i,j>0},\\
\end{array}
\end{eq}%
where the sum ranges over $\Delta$ of type II satisfying $\varphi(\Delta)=(k,t;r;r)$. 
The required is proven.
\end{proof}

%------Ex6.10--------------------------------------------------
\begin{example}\label{ex_O1}
For $x_0,x,y,z\in\Y$ and $t,r>0$ the following equalities of $\AlgLargeY$ are partial cases of key formula~(\ref{eq3_lemma_O_key1}) from the proof of Lemma~\ref{lemma_O_key1}: 
\begin{enumerate}
\item[$\bullet$] $\si_{1,t;r;r}(x_0,x;y;z) = 
\tr(x_0)\si_{t;r;r}(x;y;z) 
- \si_{t-1,1;r;r}(x,x_0x;y;z)$ 

$- \si_{t;r-1,1;r}(x;y,x_0y;z) 
- \si_{t;r-1,1;r}(x;y,yx_0^T;z) \in J_{t+2r}$; 
 
\item[$\bullet$] $\si_{(2,0;2;2)}(x_0,x;y;z) = 
\si_2(x_0)\si_{0;2;2}(x;y;z)$

$- \tr(x_0)\si_{0;1,1;2}(x;y,x_0y;z) - \tr(x_0)\si_{0;1,1;2}(x;y,yx_0^T;z)$

$+ \si_{0;2;2}(x;x_0y;z) + \si_{0;2;2}(x;yx_0^T;z)$

$+ \si_{0;1,1;2}(x;y,x_0^2y;z) + \si_{0;1,1;2}(x;y,y(x_0^T)^2;z)$

$+ \si_{0;1,1;2}(x;x_0y,yx_0^T;z)
+ \si_{0;1,1;2}(x;y,x_0yx_0^T;z) \in J_4$.
\end{enumerate}
\end{example}
\bigskip

Consider quivers $\Q_{III}=\Q(x;y_0,y;z)$ and $\Q_{IV}=\Q(x,e_1,e_2; y, y_1; z)$.

%------L6.11--------------------------------------------------
\begin{lemma}\label{lemma_O_sets2}
Define a homomorphism $\varphi:\LA \Q_{IV}\RA \to \LA \Q_{III}\RA$ of semigroups as follows: 
$$\varphi(a^T)=\varphi(a)^T
\;\text{ and }\;
\varphi(a)=\left\{
\begin{array}{cl}
a,& \text{ if }a=x \text{ or } a=y \text{ or } a=z\\
y_0z ,& \text{ if } a=e_1\\
y_0z^T ,& \text{ if } a=e_2\\
y_0x^T ,& \text{ if } a=y_1\\
\end{array}
\right.$$%
for an arrow $a$ of $\Q_{IV}$.  Then $\varphi$ induces the well-defined bijection $\ov{\varphi}:\LA \ov{\Q}_{IV}\RA \to \LA \ov{\Q}_{III}\RA$ of sets of $\sim$-equivalence classes of primitive elements.
\end{lemma}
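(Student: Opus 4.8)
The plan is to follow, step for step, the strategy of the proof of Lemma~\ref{lemma_O_sets1}, the only difference being that the object absorbed by $\varphi$ is now the \emph{arrow} $y_0$ rather than the \emph{loop} $x_0$. First I would check that $\varphi:\LA\Q_{IV}\RA\to\LA\Q_{III}\RA$ is a well-defined homomorphism of semigroups carrying closed paths to closed paths. This is routine once the heads and tails are matched: $\varphi(e_1)=y_0z$ and $\varphi(e_2)=y_0z^T$ are loops at vertex~$1$ and $\varphi(y_1)=y_0x^T$ runs from vertex~$2$ to vertex~$1$, exactly as $e_1,e_2,y_1$ do in $\Q_{IV}$, while $\varphi$ fixes $x,y,z$ and commutes with ${}^T$. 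I would then extend $\varphi$ to a homomorphism $\path{\Q_{IV}}\to\path{\Q_{III}}$.

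The structural core is the analogue of statement~(\ref{eq_statement1}). Cutting a closed path immediately after each occurrence of $z$ or $z^T$ (the unique arrows $1\to 2$ in both quivers) decomposes it into elementary \emph{excursion blocks}, each an element $az$ or $az^T$ with $\deg_z(a)+\deg_{z^T}(a)=0$, recording one trip $1\to 2\to 1$. I would introduce sets $\Omega_{III}$ and $\Omega_{IV}$ of such blocks, write down their normal forms (loops at~$1$, then one arrow $2\to 1$, then loops at~$2$, then $z$ or $z^T$), and exhibit an explicit inverse of $\varphi$ on them. This is the place where $y_0$ is read off: a bare return $y_0$ followed by $z$, by $z^T$ or by $x^T$ is decoded as $e_1,e_2,y_1$ respectively (dually for $y_0^T$ via the transpose), whereas the ordinary returns $y,y^T$ and the letters $x,x^T,z,z^T$ decode as themselves. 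For the $z$-free portions of a block I would invoke Lemma~\ref{lemma_GL_sets} verbatim, just as in the proof of Lemma~\ref{lemma_O_sets1}.

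Granting the block bijection, the remaining steps transcribe the proof of Lemma~\ref{lemma_O_sets1} almost unchanged. I would establish $a\sim b\iff\varphi(a)\sim\varphi(b)$ (the analogue of~(\ref{eq_statement2})) by decomposing $a$ and $b$ into blocks and applying the block bijection together with Remark~\ref{remark_primitive}; this yields at once that $\ov{\varphi}$ is well defined and injective. Surjectivity (the analogue of~(\ref{eq_statement4})) follows by reassembling a closed path of $\Q_{III}$ from the block-preimages, and preservation of primitivity in both directions (the analogue of~(\ref{eq_statement5})) follows from Remark~\ref{remark_primitive} once one knows that $\varphi$ respects the block decomposition. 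Combining these statements gives the asserted bijection $\ov{\varphi}:\LA\ov{\Q}_{IV}\RA\to\LA\ov{\Q}_{III}\RA$.

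The hard part is exactly the point at which $y_0$ behaves differently from the loop $x_0$ of Lemma~\ref{lemma_O_sets1}. Because $y_0$ is an arrow $2\to 1$ and not a loop, a bare return $y_0$ (or $y_0^T$) sitting at the boundary of an excursion block has its decoding partner — the adjacent $z$, $z^T$ or $x^T$ — lying in the \emph{neighbouring} block, so that the image of a single generator straddles the cut between two consecutive excursions. Hence the inverse cannot be defined block-by-block in isolation, as it could in the loop case; I must show that this cross-boundary attachment is forced and unambiguous, that it is compatible with cyclic rotation and with the transpose (so that it descends to $\sim$-classes), and that it neither creates nor destroys proper powers. Making the attachment canonical — deciding, at each $z^{\pm}$ flanked by a bare $y_0$ or $y_0^T$, to which side it belongs — is the genuinely delicate bookkeeping on $\Omega_{III}$ and $\Omega_{IV}$ that has no counterpart in Lemma~\ref{lemma_O_sets1}, and it is where I expect the real difficulty to lie.
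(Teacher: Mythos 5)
Your overall strategy --- reduce the lemma to a bijection of ``excursion blocks'' and then transcribe the proof of Lemma~\ref{lemma_O_sets1} --- is exactly the paper's strategy, and your decoding rule ($y_0z\to e_1$, $y_0z^T\to e_2$, $y_0x^T\to y_1$) is exactly the paper's explicit inverse. But your definition of the blocks on the $\Q_{IV}$ side is wrong, and this breaks the central claim. You cut closed paths of $\Q_{IV}$ ``after each occurrence of $z$ or $z^T$'' and take $\Omega_{IV}$ to consist of words $az^{\de}$ with $\deg_z(a)+\deg_{z^T}(a)=0$. Since $\varphi(e_1)=y_0z$ and $\varphi(e_2)=y_0z^T$ create new occurrences of $z^{\pm}$, the number of $z$'s is not preserved by $\varphi$: for instance $e_1yx^Tz$ lies in your $\Omega_{IV}$, but its image $y_0zyx^Tz$ has $z$-degree $2$ and is not a block of $\Q_{III}$; conversely the preimage $x^ie_1$ of the block $x^iy_0z$ does not end in $z^{\pm}$ and so is not in your $\Omega_{IV}$ at all. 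The correct definition (the paper's) counts $e_1^{\pm},e_2^{\pm}$ together with $z^{\pm}$: $\Omega_{IV}$ is the set of $a\in\LA\Q_{IV}\RA$ with $\deg_z(a)+\deg_{z^T}(a)+\sum_{i=1}^2\left(\deg_{e_i}(a)+\deg_{e_i^T}(a)\right)=1$, i.e.\ each letter is weighted by the $z$-degree of its $\varphi$-image. For the same reason your appeal to Lemma~\ref{lemma_GL_sets} ``for the $z$-free portions'' does not apply here: the $z$-free closed subwords of $\Q_{IV}$ are arbitrary words in $x,e_1,e_2$, on which $\varphi$ acts by $e_1\mapsto y_0z$, $e_2\mapsto y_0z^T$ and not by $e_i\mapsto x_0^ix$; with the corrected weighting the genuinely degree-zero closed paths are just powers of $x$ or of $x^T$, and no such lemma is needed.

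The ``straddling'' difficulty you flag at the end is real only for a letter-by-letter decoding of the raw word, and the paper dissolves it rather than confronting it: the block bijection is stated only between $\sim$-equivalence classes, $\ov{\varphi}:\ov{\Omega}_{IV}\to\ov{\Omega}_{III}$ (unlike the on-the-nose bijection $\varphi:\Omega_{II}\to\Omega_I$ in the proof of Lemma~\ref{lemma_O_sets1}), and every class in $\ov{\Omega}_{III}$ has a representative $x^ib(x^T)^jz^{\de}$ with $b\in\{y_0,y\}$ --- the problematic case of a bare $y_0^T$ at the start of a block is removed by transposing and cyclically rotating the block before decoding. With that representative the partner of $y_0$ (the following $z^{\de}$ or $x^T$) always lies inside the same block, and the three-case inverse is visibly unique. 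So the point you expected to be genuinely delicate is handled by a normal-form choice, while the point you treated as routine --- which letters to cut at --- is where your argument actually fails.
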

\begin{proof}It is not difficult to see that $\varphi:\LA \Q_{IV}\RA \to \LA \Q_{III}\RA$ is well-defined. Denote by $\Omega_{III}$ the set of elements $a\in\LA\Q_{III}\RA$ satisfying $\deg_z(a)+\deg_{z^T}(a)=1$ and denote by $\Omega_{IV}$ the set of elements $a\in\LA\Q_{IV}\RA$ satisfying 
$$\deg_z(a)+\deg_{z^T}(a)+\sum_{i=1}^2(\deg_{e_i}(a)+\deg_{e_i^T}(a))=1.$$% 
We claim that 
\begin{eq}\label{eq_statement6}
 \ov{\varphi}: \ov{\Omega}_{IV}\to\ov{\Omega}_{III} \text{ is a bijection of $\sim$-equivalence classes}.
\end{eq}%
An arbitrary element of $\Omega_{III}$ is $\sim$-equivalent to $a=x^i b (x^T)^j z^{\de}$ for some $i,j\geq0$, $\de\in\{1,T\}$, and $b\in\{y_0,y\}$. If $\deg_{y_0}(a)=0$, then $a$ has a unique preimage $\varphi^{-1}(a)=a$. Otherwise, $b=y_0$ and $a$ also has a unique preimage, namely, 
$$\varphi^{-1}(a)=\left\{
\begin{array}{cl}
x^i e_1 ,& \text{ if } j=0 \text{ and }\de=1\\
x^i e_2 ,& \text{ if } j=0 \text{ and }\de=T\\
x^i y_1 (x^T)^{j-1} z^{\de} ,& \text{ if } j>0\\
\end{array}
\right..$$%
Statement~(\ref{eq_statement6}) is proven. 

Applying statement~(\ref{eq_statement6}) instead of statement~(\ref{eq_statement1}), we complete the proof of the lemma in a similar way as the proof of Lemma~\ref{lemma_O_sets1}.
\end{proof}

%------L6.12--------------------------------------------------
\begin{lemma}\label{lemma_O_key2}
Given $x,y_0,y,z\in\Y$ and $t,r,s\geq0$, we have $\si_{t;r,s;r+s}(x;y_0,y;z)\in J_{t+r+2s}$. 
\end{lemma}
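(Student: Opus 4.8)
The plan is to follow the proof of Lemma~\ref{lemma_O_key1} almost verbatim, replacing the bijection of Lemma~\ref{lemma_O_sets1} by that of Lemma~\ref{lemma_O_sets2}. I work in $\AlgLargeY$ and assume $x,y_0,y,z$ are pairwise distinct letters, together with auxiliary distinct letters $e_1,e_2,y_1$ as in the quivers $\Q_{III}=\Q(x;y_0,y;z)$ and $\Q_{IV}=\Q(x,e_1,e_2;y,y_1;z)$. Let $\Upsilon_{III}$ (resp. $\Upsilon_{IV}$) be the set of finite multisubsets of $\LA\Q_{III}\RA$ (resp. $\LA\Q_{IV}\RA$), with $\sim$-equivalence classes $\ov{\Upsilon}_{III},\ov{\Upsilon}_{IV}$; Lemma~\ref{lemma_O_sets2} yields a bijection $\ov{\varphi}:\ov{\Upsilon}_{IV}\to\ov{\Upsilon}_{III}$. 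By the definition of $\si_{\un{t};\un{r};\un{s}}$ we have $\si_{t;r,s;r+s}(x;y_0,y;z)=(-1)^{t}\sum_{\omega\in\ov{\Omega}_{III}}\si(\omega)$, where $\ov{\Omega}_{III}$ consists of the classes of multidegree $(t,r,s,r+s)$; setting $\ov{\Omega}_{IV}=\{\omega\in\ov{\Upsilon}_{IV}\,|\,\mdeg(\varphi(\omega))=(t,r,s,r+s)\}$, the restriction of $\ov{\varphi}$ gives an isomorphism $\ov{\Omega}_{IV}\simeq\ov{\Omega}_{III}$.

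Next I would rewrite the sum over $\ov{\Omega}_{IV}$ and split it by the $\Q_{IV}$-multidegree $\Delta$ of $\omega$, exactly as in formula~(\ref{eq2_lemma_O_key1}), obtaining $\ov{\Omega}_{IV}=\bigsqcup_{\varphi(\Delta)=(t,r,s,r+s)}\ov{\Omega}_{IV}^{\Delta}$. The key compatibility is $\si(\varphi(\omega))=\varphi(\si(\omega))$ for $\omega\in\ov{\Omega}_{IV}^{\Delta}$, which holds because $\varphi$ preserves the parity of $\deg_{\un{y}}+\deg_{\un{z}}$ on each arrow: $e_1\mapsto y_0z$ and $e_2\mapsto y_0z^T$ each change this degree by $2$, while $y_1\mapsto y_0x^T$, $x\mapsto x$, $y\mapsto y$, $z\mapsto z$ leave it unchanged, so the sign $\xi$ is unaffected. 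Grouping the terms of fixed $\Delta$ recovers, up to sign, a single $\si_{\Delta'}(x,e_1,e_2;y,y_1;z)$, whence
\[
\si_{t;r,s;r+s}(x;y_0,y;z)=(-1)^{t}\!\!\!\sum_{\varphi(\Delta)=(t,r,s,r+s)}\!\!\!\pm\,\varphi\bigl(\si_{\Delta'}(x,e_1,e_2;y,y_1;z)\bigr),
\]
a finite sum. Writing $\Delta=(a,b_1,b_2;c,d;f)$ in the order $(x,e_1,e_2;y,y_1;z)$, the constraint $\varphi(\Delta)=(t,r,s,r+s)$ reads $a+d=t$, $b_1+b_2+d=r$, $c=s$, $b_1+b_2+f=r+s$; hence the subscript-weight of each $\si_{\Delta'}$ equals $(a+b_1+b_2)+2(c+d)=t+r+2s$. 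By Remark~\ref{remark_O_OK1}, each $\varphi(\si_{\Delta'})$ is then a generator of $J_{t+r+2s}$, so the whole expression lies in $J_{t+r+2s}$.

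The only genuinely new bookkeeping compared with Lemma~\ref{lemma_O_key1} is that the extracted letter $y_0$ is here a $\un{y}$-arrow of $\Q_{III}$ rather than a loop; consequently there is no ``pure power'' residue to peel off, Lemma~\ref{lemma_O_sets2} is a plain bijection (with no extra $\{y_0\}$ summand as in Lemma~\ref{lemma_O_sets1}), and no extra $\si_{\al_0}(\cdot)$ factor appears in the expansion. The main point to get right is the weight count: one must check that passing from $\Q_{III}$ to $\Q_{IV}$ trades the occurrences of $y_0$ (of total multiplicity $r$) for unit drops in the subscript weight, which is precisely what the identity $(a+b_1+b_2)+2(c+d)=t+r+2s$ records. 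I expect this bookkeeping, rather than the routine sign verification, to be the step demanding the most care.
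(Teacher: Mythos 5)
Your proposal follows the paper's proof of this lemma essentially step for step (bijection from Lemma~\ref{lemma_O_sets2}, decomposition of $\ov{\Omega}_{IV}$ by $\Q_{IV}$-multidegree, regrouping each block into a single generator of the ideal), and your weight count $(a+b_1+b_2)+2(c+d)=t+r+2s$ is exactly the paper's computation. But there is a concrete error in your sign verification, which you single out as the ``routine'' part. You claim $\si(\varphi(\omega))=\varphi(\si(\omega))$ because $e_2\mapsto y_0z^T$ changes $\deg_{\un{y}}+\deg_{\un{z}}$ by $2$. It does not: the exponent $\xi$ in the definition of $\si(\omega)$ involves $\sum_j\deg_{y_j}(e_i)+\sum_j\deg_{z_j}(e_i)$, which counts only the untransposed letters $y_j,z_j$ and not $y_j^T,z_j^T$ (this is forced by Example~\ref{ex_54}, where $\si_{0,1}(a,b,c)=-\tr(b\ov{c})$ requires $\tr(bc)$ and $\tr(bc^T)$ to enter with \emph{opposite} signs). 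Hence $e_2\mapsto y_0z^T$ shifts the exponent by $1$ (only the $y_0$ counts), and $e_2^T\mapsto zy_0^T$ also shifts it by $1$ (only the $z$ counts), while $e_1\mapsto y_0z$ indeed shifts it by $2$. The correct compatibility is $\si(\varphi(\omega))=(-1)^{\al_2}\varphi(\si(\omega))$, where $\al_2=\deg_{e_2}+\deg_{e_2^T}$ is the $e_2$-entry of $\Delta$; this is what produces the alternating signs visible in Example~\ref{ex_O2}, where $\si_{t,1;s;s}(x,y_0z;y;z)$ and $\si_{t,1;s;s}(x,y_0z^T;y;z)$ occur with opposite signs.

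The error is repairable and does not sink the lemma: since $\al_2$ is determined by $\Delta$, the sign discrepancy is constant on each block $\ov{\Omega}_{IV}^{\Delta}$, so each block still sums to $\pm\varphi(\si_{\Delta'}(\cdots))$ and the ideal membership $\si_{t;r,s;r+s}(x;y_0,y;z)\in J_{t+r+2s}$ survives. But as written, your justification of the ``key compatibility'' is false, and the explicit expansion you would obtain carries wrong signs; you should redo the arrow-by-arrow parity count with the correct convention for transposed letters.
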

\begin{proof}
We work in $\AlgLargeY$. Without loss of generality we can assume that $x,y_0,y,z\in\{x_1,x_2,\ldots\}$ are pairwise different letters. Assume that $e_1,e_2, y_1$ are pairwise different letters from $\{x_1,x_2,\ldots\}\backslash \{x,y_0,y,z\}$. In what follows, we use notations from Lemma~\ref{lemma_O_sets2}.  Let $\Upsilon_{III}$ be the set of finite multisubsets of $\LA\Q_{III}\RA$ and $\Upsilon_{IV}$ be the set of finite multisubsets of $\LA\Q_{IV}\RA$.  Then Lemma~\ref{lemma_O_sets2} implies that $\ov{\varphi}:\ov{\Upsilon}_{IV}\to \ov{\Upsilon}_{III}$ is a bijection of sets of multisets of $\sim$-equivalence classes. 

Assume that $\omega$ belongs to $\ov{\Upsilon}_{III}$ or $\ov{\Upsilon}_{IV}$. Since we work in $\AlgLargeY$, the element $\si(\omega)$ is well-defined. For short, we write $\mdeg(\omega)$ for $\mdeg(\si(\omega))$. We refer to the entries of $\Delta=\mdeg(\omega)$ as follows:
\begin{enumerate}
 \item[$\bullet$] $\De=(\al; \be_0, \be;\ga)$ for $\omega\in\ov{\Upsilon}_{III}$, where $\De=\mdeg(x^{\al}  y_0^{\be_0} y^{\be} z^{\ga})$;
 
 \item[$\bullet$] $\De=(\al,\al_1,\al_2; \be,\be_1;\ga)$ for $\omega\in\ov{\Upsilon}_{IV}$, where $\De=\mdeg(x^{\al} x_1^{\al_1} x_2^{\al_2} y^{\be} y_1^{\be_1} z^{\ga})$.
\end{enumerate}%
In the first case (the second case, respectively) we say that $\De$ is a multidegree of type III (type IV, respectively). By the definition, 
\begin{eq}\label{eq1_lemma_O_key2}
 \si_{t;r,s;r+s}(x;y_0,y;z)=(-1)^{t}\!\! \sum_{\omega\in \ov{\Omega}_{III}} \si(\omega),
\end{eq}%
where $\ov{\Omega}_{III}=\ov{\Omega}(t;r,s;r+s)=\{\omega\in\ov{\Upsilon}_{III}\,|\,\mdeg(\omega) = (t;r,s;r+s)\}$. For $\ov{\Omega}_{IV} = \{\omega\in\ov{\Upsilon}_{IV}\,|\,\mdeg(\varphi(\omega))=(t;r,s;r+s)\}$ an isomorphism of sets $\ov{\Omega}_{IV}\simeq \ov{\Omega}_{III}$ is determined by the restriction of $\ov{\varphi}$. 

Given a multidegree $\De$ of type IV, we denote $\ov{\Omega}_{IV}^{\De}=\{\omega\in\ov{\Upsilon}_{IV}\,|\,\mdeg(\omega)=\De\}$ and $\varphi(\De)=\mdeg(\varphi(x^{\al} x_1^{\al_1} x_2^{\al_2} y^{\be} y_1^{\be_1} z^{\ga}))$. Thus 
\begin{eq}\label{eq2_lemma_O_key2}
 \ov{\Omega}_{IV}=\bigsqcup \ov{\Omega}_{IV}^{\Delta}, 
\end{eq}%
where the union ranges over $\Delta$ of type IV satisfying $\varphi(\Delta)=(t;r,s;r+s)$. Consequently applying formula~(\ref{eq1_lemma_O_key2}), the isomorphism $\ov{\Omega}_{IV}\simeq \ov{\Omega}_{III}$, and formula~(\ref{eq2_lemma_O_key2}) we obtain
$$\si_{t;r,s;r+s}(x;y_0,y;z)=(-1)^{t}\!\!\!\! \sum_{\varphi(\Delta)=(t;r,s;r+s)}\; \sum_{\omega\in\ov{\Omega}_{IV}^{\Delta} } \si(\varphi(\omega)).$$%
Since $\deg_{y_0}(\varphi(c))+\deg_y(\varphi(c))+\deg_z(\varphi(c))$ is equal to 
$$\left(\deg_y(c) + \deg_{y_1}(c) + \deg_z(c)\right) + 
 \left( 2\deg_{e_1}(c) + \deg_{e_2}(c) + \deg_{e_2^T}(c)\right)
$$% 
for all $c\in\LA\Q_{IV}\RA$, we have $\si(\varphi(\omega))=(-1)^{\al_2}\varphi(\si(\omega))$ for $\omega\in\ov{\Omega}_{IV}^{\Delta}$. Therefore,
$$\sum_{\omega\in\ov{\Omega}_{IV}^{\Delta} } \si(\varphi(\omega))=(-1)^{|\Delta|+\al_2}   \varphi(\si_{\Delta}(x,e_1, e_2; y, y_1; z)).$$%
A multidegree $\Delta$ of type IV satisfies the equality $\varphi(\Delta)=(t;r,s;r+s)$ if and only if 
$$\begin{array}{rcl}
 \al+\be_1 &=& t\\
 \al_1+\al_2+\be_1 &=& r\\
 \be&=&s\\
 \ga+\al_1+\al_2&=&r+s\\
\end{array};
$$%
in particular, $|\Delta|=t+r+2s$. Thus, 
\begin{eq}\label{eq3_lemma_O_key2}
\si_{t;r,s;r+s}(x;y_0,y;z) = \sum\limits (-1)^{\al_2+r} \si_{\Delta}(x, y_0 z, y_0 z^T; y, y_0x^T;  z),
\end{eq}%
where the sum ranges over $\Delta$ of type IV satisfying $\varphi(\Delta)=(t;r,s;r+s)$. The required is proven.
\end{proof}

%------Ex6.13--------------------------------------------------
\begin{example}\label{ex_O2}
For $x,y_0,y,z\in\Y$ and $t>0$, $s\geq0$ the following equality of $\AlgLargeY$ is a partial case of key formula~(\ref{eq3_lemma_O_key2}) from the proof of Lemma~\ref{lemma_O_key2}: 
$$\begin{array}{c}
\si_{t;1,s;s+1}(x;y_0,y;z) = -\si_{t,1;s;s}(x,y_0z;y;z) + \si_{t,1;s;s}(x,y_0z^T;y;z)\\
- \si_{t-1;s,1;s+1}(x;y,y_0x^T;z)  \in J_{t+2s+1}.\\
\end{array}
$$ 
\end{example}

%------L6.14--------------------------------------------------
\begin{lemma}\label{lemma_O_key3}
Given $\un{t}\in\NN^u_0$, $\un{r}\in\NN^v_0$, $\un{s}\in\NN^w_0$ with $t=|\un{t}|$ and $r=|\un{r}|=|\un{s}|$ ($u,v,w>0$). Then for $f=\si_{\un{t};\un{r};\un{s}}(\un{a};\un{b};\un{c})\in \AlgLargeY$, where $a_i,b_j,c_k\in\Y$, we have
 \begin{enumerate}
  \item[$\bullet$] $f\in J_{t+2r-t_1}$;
  
  \item[$\bullet$] $f\in J_{t+2r-r_1}$ and $f\in J_{t+2r-s_1}$.
 \end{enumerate}
\end{lemma}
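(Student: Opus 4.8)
The plan is to mirror the proof of Lemma~\ref{lemma_GL_key}: first reduce $f$ to a ``few-variable'' form to which the key Lemmas~\ref{lemma_O_key1} and~\ref{lemma_O_key2} apply, and then recover the general $f$ by partial linearization. Throughout I work in $\AlgLargeY$ and use repeatedly that each ideal $J_l$ is closed with respect to partial linearizations, and also under substitutions $x_i\to d_i$ with $d_i\in\Y$ (such a substitution is an algebra endomorphism preserving $\si$, hence it carries the generators $\si_{\un{t};\un{r};\un{s}}(\un{d};\un{d}';\un{d}'')$ of $J_l$ again to generators of $J_l$).

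For the first inclusion $f\in J_{t+2r-t_1}$ I would start from Lemma~\ref{lemma_O_key1} applied with $k=t_1$ and the lemma's ``$t$'' equal to $t-t_1$: taking pairwise different letters $x_0,x,y,z$ this gives $\si_{t_1,\,t-t_1;\,r;\,r}(x_0,x;y;z)\in J_{(t-t_1)+2r}=J_{t+2r-t_1}$ (the constraint $|\un{r}|=|\un{s}|=r$ being automatically respected by this form). By Remark~\ref{remark_O_OK2} the element $f$ is obtained from $\si_{t_1,\,t-t_1;\,r;\,r}(x_0,x;y;z)$ by a sequence of partial linearizations — linearizing $x$ into variables of degrees $t_2,\ldots,t_u$, $y$ into degrees $r_1,\ldots,r_v$, and $z$ into degrees $s_1,\ldots,s_w$ — followed by the substitutions $x_0\to a_1$, $x_i\to a_i$, $y_j\to b_j$, $z_k\to c_k$. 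Since $J_{t+2r-t_1}$ is closed under both operations, $f\in J_{t+2r-t_1}$.

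For the inclusion $f\in J_{t+2r-r_1}$ the same scheme applies with Lemma~\ref{lemma_O_key2} in place of Lemma~\ref{lemma_O_key1}. Applying Lemma~\ref{lemma_O_key2} with the lemma's ``$r$'' equal to $r_1$ and its ``$s$'' equal to $r-r_1$ yields $\si_{t;\,r_1,\,r-r_1;\,r}(x;y_0,y;z)\in J_{t+r_1+2(r-r_1)}=J_{t+2r-r_1}$; here the first $y$-block carries the distinguished degree $r_1$, the remaining $\un{r}$-mass $r-r_1$ and the whole of $\un{s}$ (of total degree $r=r_1+(r-r_1)$) are collapsed into single variables $y$ and $z$, so the hypothesis $|\un{r}|=|\un{s}|$ is met. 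Expanding by partial linearizations ($x\to\un{a}$, $y_0\to b_1$, $y\to b_2,\ldots,b_v$, $z\to\un{c}$) and substituting the monomials gives $f\in J_{t+2r-r_1}$. Finally $f\in J_{t+2r-s_1}$ follows from the transpose symmetry~(\ref{eq_O_transpose}): since $f=\si_{\un{t};\un{s};\un{r}}(\un{a}^T;\un{c};\un{b})$ in $\AlgLargeY$ and $|\un{s}|=|\un{r}|=r$, the already established ``first $\un{r}$-entry'' inclusion applied to this expression (whose second block is now $\un{s}$, with leading entry $s_1$) gives $f\in J_{t+2r-s_1}$.

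The genuine content sits entirely in Lemmas~\ref{lemma_O_key1} and~\ref{lemma_O_key2}, so the only thing requiring care here is the bookkeeping: one must check at each collapse that the resulting few-variable multidegree still satisfies $|\un{r}|=|\un{s}|$ (so that $\si_{\cdots}$ is defined) and that the scalar parameters of Lemmas~\ref{lemma_O_key1},~\ref{lemma_O_key2} are matched correctly, and one must invoke the transpose relation~(\ref{eq_O_transpose}) to obtain the $s_1$-inclusion rather than reproving it. The degenerate cases $t_1=0$, $r_1=0$, or $s_1=0$ are trivial, since then the asserted ideal is $J_{t+2r}$, which contains $f$ by definition.
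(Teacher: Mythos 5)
Your proposal is correct and follows essentially the same route as the paper: Lemma~\ref{lemma_O_key1} for the $t_1$-inclusion, Lemma~\ref{lemma_O_key2} for the $r_1$-inclusion, the symmetry~(\ref{eq_O_transpose}) for the $s_1$-inclusion, with partial linearization (Remark~\ref{remark_O_OK2}) and closure of $J_l$ recovering the general case. The only cosmetic difference is that you linearize the few-variable identity in letters and then substitute, whereas the paper substitutes linear combinations of the $a_i,b_j,c_k$ directly and extracts coefficients; these are the same argument.
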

\begin{proof}
Assume $t_1>0$. By Remark~\ref{remark_O_OK2}, $f$ is equal to the coefficient of $\la_2^{t_2}\cdots\la_u^{t_u} \mu_1^{r_1}\cdots\mu_v^{r_v} \nu_1^{s_1}\cdots\nu_w^{s_w}$ in $h=\si_{\un{k};r;r}(a_1,\la_2 a_2+\cdots+\la_u a_u; \mu_1 b_1+\cdots+\mu_v b_v; \nu_1 c_1+\cdots+\nu_w c_w)$, where $\un{k}=(t_1,t-t_1)$ and $\la_2,\ldots,\nu_w\in\FF$. Lemma~\ref{lemma_O_key1} implies that $h$ belongs to $J_{t+2r-t_1}$. Since the ideal $J_{t+2r-t_1}$ is closed with respect to partial linearizations, we obtain $f\in J_{t+2r-t_1}$.  

Let $r_1>0$. Applying Lemma~\ref{lemma_O_key2} instead of Lemma~\ref{lemma_O_key1}, we obtain $f\in J_{t+2r-r_1}$ by the same reasoning as above. Formula~(\ref{eq_O_transpose}) concludes the proof.
\end{proof}

%------L6.15--------------------------------------------------
\begin{lemma}\label{lemma_O_key4}
Given $\un{t}\in\NN_0^u$, $\un{r}\in\NN_0^v$, $\un{s}\in\NN_0^w$ with $t=|\un{t}|$ and $r=|\un{r}|=|\un{s}|$. Then $\si_{\un{t};\un{r};\un{s}}(\un{a};\un{b};\un{c})$ belongs to $J_{t+2r}^{(p)}$ for $a_i,b_j,c_k\in\Y$.  
\end{lemma}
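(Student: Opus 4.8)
The plan is to prove this exactly as the $GL(n)$ statement Lemma~\ref{lemma_GL_key2} was proved, replacing Lemma~\ref{lemma_GL_OK2} by its three-index counterpart Lemma~\ref{lemma_O_45} and carrying the extra bookkeeping coming from the three exponent vectors $\un{t},\un{r},\un{s}$. The goal is to rewrite $\si_{\un{t};\un{r};\un{s}}(\un{a};\un{b};\un{c})$ as an $\FF$-linear combination of elements of the same shape in which every exponent lies in $\{1,p,p^2,\ldots\}$; this is precisely membership in $J_{t+2r}^{(p)}$. I would reduce the three vectors one entry at a time, beginning with $t_1$.

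If $p=0$, Lemma~\ref{lemma_O_45} lets me replace any entry by that many $1$'s, and iterating over all entries of $\un{t}$, then $\un{r}$, then $\un{s}$ produces a term with all exponents equal to $1$, which already lies in $J_{t+2r}^{(p)}$. Note that splitting an entry of $\un{r}$ (or $\un{s}$) into a sum of smaller parts does not change $|\un{r}|$ (or $|\un{s}|$), so the balancing condition $|\un{r}|=|\un{s}|$ is preserved throughout, and the first equality of Lemma~\ref{lemma_O_45} does not touch $\un{r},\un{s}$ at all.

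Assume $p>0$ and write $t_1=\sum_{i=1}^k l_i p^{\al_i}$ in base $p$, with $1\le l_i\le p-1$ and $0\le\al_1<\cdots<\al_k$. Set $l=l_1+\cdots+l_k$, let $\un{t}''$ be obtained from $\un{t}$ by replacing $t_1$ with the block consisting of $l_i$ copies of $p^{\al_i}$, and let $\al=(p^{\al_1}!)^{l_1}\cdots(p^{\al_k}!)^{l_k}$. Applying the factorial form of Lemma~\ref{lemma_O_45} to split $t_1$ (respectively the blocks $p^{\al_i}$) into $1$'s yields the two equalities $t_1!\,\si_{\un{t};\un{r};\un{s}}(\un{a};\un{b};\un{c})=\si_{(1^{t_1},t_2,\ldots,t_u);\un{r};\un{s}}(\un{a}^{(t_1)};\un{b};\un{c})$ and $\al\,\si_{\un{t}'';\un{r};\un{s}}(\un{a}^{(l)};\un{b};\un{c})=\si_{(1^{t_1},t_2,\ldots,t_u);\un{r};\un{s}}(\un{a}^{(t_1)};\un{b};\un{c})$, both holding in $\AlgLargeY$ over $\QQ$ (clearing denominators gives an integral identity valid over $\FF$ by Remark~\ref{remark_O_45}). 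Hence over $\QQ$ one gets $\si_{\un{t};\un{r};\un{s}}(\un{a};\un{b};\un{c})=\be_{t_1}\,\si_{\un{t}'';\un{r};\un{s}}(\un{a}^{(l)};\un{b};\un{c})$ with $\be_{t_1}=\al/t_1!$, which is precisely the scalar appearing in the proof of Lemma~\ref{lemma_GL_key2}.

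The crucial point, statement~(\ref{eq_statement_beta}) established inside that proof, is that $\be_{t_1}$ has $p$-adic valuation $0$ and therefore gives a well-defined nonzero element of $\FF$; since this depends only on $t_1$ and not on the $GL$ versus $O$ setting, I can cite it verbatim. Thus the displayed identity descends to $\FF$ through Remark~\ref{remark_O_45}, and $\un{t}''$ now has all of its first-vector entries equal to powers of $p$. Repeating the argument on $t_2,\ldots,t_u$, and then on $r_1,\ldots,r_v$ and $s_1,\ldots,s_w$ via the second and third equalities of Lemma~\ref{lemma_O_45}, reduces every exponent into $\{1,p,p^2,\ldots\}$ while preserving $|\un{r}|=|\un{s}|$, which gives the claim. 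The only genuine subtlety, and the main thing to get right, is this passage from $\QQ$ to characteristic $p$: Lemma~\ref{lemma_O_45} is stated only over $\QQ$, so one must verify that the scalar surviving after clearing denominators is a $p$-adic unit, which is guaranteed by~(\ref{eq_statement_beta}); everything else is the routine index bookkeeping of Remarks~\ref{remark_O_OK1} and~\ref{remark_O_OK2}.
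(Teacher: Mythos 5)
Your proposal is correct and follows exactly the route the paper takes: the paper's proof of this lemma consists of the single sentence that one repeats the proof of Lemma~\ref{lemma_GL_key2}, applying Lemma~\ref{lemma_O_45} together with Remark~\ref{remark_O_45} in place of Lemma~\ref{lemma_GL_OK2}, which is precisely what you have written out, including the key point that the scalar $\be_{t_1}=\al/t_1!$ from statement~(\ref{eq_statement_beta}) is a $p$-adic unit so the identity descends from $\QQ$ to $\FF$. No gaps.
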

\begin{proof} We repeat the proof of Lemma~\ref{lemma_GL_key2}, applying Lemma~\ref{lemma_O_45} together with Remark~\ref{remark_O_45} instead of Lemma~\ref{lemma_GL_OK2}. 
\end{proof} 

Now we can prove Theorem~\ref{theo_O} and Remark~\ref{remark_theo_O}:
\begin{proof}
We work in $\AlgLargeY$. Since the field $\FF$ is infinite, Theorem~\ref{theo_Lopatin} together with Remark~\ref{remark_O_OK1} implies that $\KLargeY{n}$ is generated by 
\begin{enumerate}
\item[(a)] $\si_{\un{t},\un{r},\un{s}}(\un{a},\un{b},\un{c})=0$ for $|\un{t}|+2|\un{r}|>n$, where $\un{t}\in\NN_0^u$, $\un{r}\in\NN_0^v$, $\un{s}\in\NN_0^w$ ($u,v,w>0$) satisfy $|\un{r}|=|\un{s}|$ and $a_i,b_j,c_k\in\Y$;

\item[(b)] $\si_t(b)=0$ for $t>n$, where $b\in\Y$.
\end{enumerate}

Applying Lemmas~\ref{lemma_O_key3} and~\ref{lemma_O_key4} instead of Lemmas~\ref{lemma_GL_key} and~\ref{lemma_GL_key2}, respectively, we complete the proof in the same way as we proved Theorem~\ref{theo_GL} at the end of Section~\ref{section3}.
\end{proof}

%------L6.16--------------------------------------------------
\begin{lemma}\label{lemma_O_fb}
If $p=0$, then the ideal $\KLargeY{n}\vartriangleleft \AlgLargeY$ is generated by $\si_{t,r}(a,b,c)=0$ for $t+2r=n+1$, $t,r\geq0$, and $a,b,c\in\FF\Y$; in particular, $\KLarge{n}$ is finitely based.

If $p>0$, then the ideal $\KLargeY{n}\vartriangleleft \AlgLargeY$ is not finitely based. 
\end{lemma}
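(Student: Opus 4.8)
The plan is to follow the proof of Lemma~\ref{lemma_GL_fb} line by line, substituting the $O(n)$ tools for the $GL(n)$ ones: Theorem~\ref{theo_O} and Remark~\ref{remark_theo_O} in place of Theorem~\ref{theo_GL}, and Lemmas~\ref{lemma_O_key3},~\ref{lemma_O_key4} in place of Lemmas~\ref{lemma_GL_key},~\ref{lemma_GL_key2}. Assume first $p=0$ and let $I$ be the ideal of $\AlgLargeY$ generated by all $\si_{t,r}(a,b,c)$ with $t+2r=n+1$; since a substitution sends such a generator to another one, $I$ is a $\Tid$-ideal, and clearly $I\subseteq\KLargeY{n}$. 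By Remark~\ref{remark_theo_O} the ideal $\KLargeY{n}$ is generated by the elements $\si_{\un{t};\un{r};\un{s}}(\un{a};\un{b};\un{c})$ with $n<|\un{t}|+2|\un{r}|\leq 2n$ together with $\si_t(b)$ for $t>n$, $b\in\EY$, so it suffices to show these lie in $I$. Writing $J_l$ as in Section~\ref{section6}, I would prove $J_l\subseteq I$ for $n<l\leq 2n$ by downward induction: the base $l=n+1$ is the equality $J_{n+1}=I$, while for $l>n+1$ Lemma~\ref{lemma_O_key4} places every generator of $J_l$ in the span of those $\si_{\un{t};\un{r};\un{s}}$ all of whose entries equal $1$ (as the admissible nonzero entries $\{1,p,p^2,\dots\}$ reduce to $\{1\}$ when $p=0$), and then selecting a positive entry and applying Lemma~\ref{lemma_O_key3} puts it into $J_{l-1}$, which by induction lies in $I$.

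It then remains to treat $\si_t(b)$ for $t>n$ and $b\in\EY$. Because $I$ is a $\Tid$-ideal, applying the substitution $x_1\to b$ reduces this to the single claim $\si_t(x_1)\in I$ for $t>n$, which I would establish exactly as in the characteristic-zero part of Lemma~\ref{lemma_GL_fb}: the computation involves only the letter $x_1$, so it takes place in a subalgebra on which $\AlgLargeY$ and $\AlgLarge$ agree, and Lemma~\ref{lemma_GL_key2} writes $\si_t(x_1)$ as a polynomial $f_t$ in the traces $\tr(x_1^j)$, $1\leq j\leq t$. Using that $\si_{(n,1)}(x_1,x_1^{j-n})$ is a substitution of a partial linearization of the generator $\si_{n+1}(x_1)=\si_{n+1,0}(x_1,x_2,x_3)\in I$, one rewrites every $\tr(x_1^j)$ with $j>n$ and obtains a polynomial $h_t$ in $\tr(x_1),\dots,\tr(x_1^n)$ with $f_t-h_t\in I$. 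Since $\PhiLargeY{n}(\si_t(x_1))=0$ for $t>n$, we get $\PhiLargeY{n}(h_t)=0$, and as $\tr(X_1),\dots,\tr(X_1^n)\in R^{GL(n)}\subseteq R^{O(n)}$ are algebraically independent, $h_t=0$; hence $\si_t(x_1)=f_t\in I$, completing the proof that $\KLargeY{n}=I$, which is finitely based as a $\Tid$-ideal.

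For $p>0$ I would copy the corresponding part of Lemma~\ref{lemma_GL_fb} almost verbatim, now using that $\{\si_i(e)\,|\,i>0,\ e\in\EY\}$ are algebraically independent generators of $\AlgLargeY$ (Lemma~\ref{lemma_O_Lopatin}). Supposing $\KLargeY{n}$ were generated by finitely many $f_1,\dots,f_s$ as a $\Tid$-ideal, let $m$ be the largest index of a factor $\si_t(e)$ occurring in the $f_i$ and choose $k$ with $p^k>\max\{m,n\}$. Then $\si_{p^k}(x_1)\in\KLargeY{n}$, and were it in the $\Tid$-ideal generated by the $f_i$, rewriting the resulting identity in the free generators would force some substituted summand to contain $\si_{p^k}(x_1)$. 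Among the relations of Lemma~\ref{lemma_O_Lopatin}, relations (a), (c), (d) never raise an index, so only relation (b) applied to some $\si_j(x_1^l)$ with $jl=p^k$ can produce $\si_{p^k}(x_1)$; but then $j=p^{k_0}$, $l=p^{k-k_0}$, and part~2 of Lemma~\ref{lemma_P} gives $P_{j,l}(x_1)=\si_j(x_1)^l$, which has no summand $\si_{p^k}(x_1)$ — a contradiction.

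The main obstacle is the characteristic-zero reduction of the genuinely mixed generators $\si_{\un{t};\un{r};\un{s}}$ with $|\un{r}|>0$ into $I$; this is precisely the role of the new Lemmas~\ref{lemma_O_key3} and~\ref{lemma_O_key4}, and once the downward induction on $l=|\un{t}|+2|\un{r}|$ is set up, the single-letter part and the entire $p>0$ argument are formally identical to the $GL(n)$ case. A small point worth verifying is that the extra relation (d), $\si_i(a)=\si_i(a^T)$, merely relabels free generators by maximal $\sim$-representatives, so it neither creates the index $p^k$ in the $p>0$ argument nor interferes with the trace reduction when $p=0$.
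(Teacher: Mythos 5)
Your proposal is correct and takes essentially the same route as the paper, whose proof of this lemma consists precisely of repeating the argument of Lemma~\ref{lemma_GL_fb} with Theorem~\ref{theo_O}, Remarks~\ref{remark_theo_O},~\ref{remark_O_OK1} and Lemma~\ref{lemma_O_Lopatin} substituted for their $GL(n)$ counterparts. Your explicit reduction of the mixed generators $\si_{\un{t};\un{r};\un{s}}$ via Lemmas~\ref{lemma_O_key3} and~\ref{lemma_O_key4} merely unwinds what Remark~\ref{remark_theo_O} already packages (in characteristic zero the conditions force $|\un{t}|+2|\un{r}|=n+1$ with all entries equal to $1$), and the single-letter trace reduction and the $p>0$ argument are handled exactly as in the paper.
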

\begin{proof}
Using Theorem~\ref{theo_O}, Remarks~\ref{remark_theo_O},~\ref{remark_O_OK1} and Lemma~\ref{lemma_O_Lopatin}, instead of Theorem~\ref{theo_GL}, Remarks~\ref{remark_theo_GL},~\ref{remark_GL_OK1} and Lemma~\ref{lemma_GL_Donkin}, respectively, we prove this lemma in the same way as we proved Lemma~\ref{lemma_GL_fb}.
\end{proof}

%=====================================================================
%=====================================================================
%------Sec7--------------------------------------------------
\section{Proof of Theorem~\ref{theo_O_main}}\label{section7}

Assume that 
\begin{eq}\label{eq5} 
f=\sum_i \al_i f_i a_i\in \AlgLargeY\otimes \FF\Y^{\#}
\end{eq}%
for $\al_i\in\FF$, $f_i=\si_{t_{i1}}(b_{i1})\cdots \si_{t_{ir_i}}(b_{ir_i})\in \AlgLargeY$,  $b_{ij}\in\Y$, $a_i\in\Y^{\#}$. 

If $a_i\in\Y$ for all $i$, then we write $\tr(f)$ for $\sum_i \al_i f_i \tr(a_i)\in \AlgLargeY\otimes \FF\Y^{\#}$. Note that we do not define $\tr(f)$ for $f\in\AlgLargeY\otimes 1\subset \AlgLargeY\otimes \FF\Y^{\#}$. We say that $f$ does not contain a letter $x$ if $\deg_x(f_i)=0$ and $\deg_x(a_i)=0$ for all $i$.

%------L7.1--------------------------------------------------
\begin{lemma}\label{lemma_71}
Let $f\in \AlgLargeY\otimes \FF\Y^{\#}$ do not contain letters $x$ and $x^T$. Then 
\begin{enumerate}
\item[1)] $f\in \TLargeY{n}$ if and only if $\tr(fx)\in \KLargeY{n}$;
 
\item[2)] $f=0$ if and only if $\tr(fx)=0$.
\end{enumerate}
\end{lemma}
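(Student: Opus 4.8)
The plan is to reduce both parts to a single principle, namely that an $n\times n$ matrix over $R$ is zero exactly when its trace against a fresh generic matrix vanishes, combined with the fact (Lemma~\ref{lemma_O_Lopatin}) that $\AlgLargeY$ is a \emph{free} commutative algebra on the symbols $\si_t(b)$, $t>0$, $b\in\EY$. Write $f=\sum_a g_a\otimes a$ with $g_a\in\AlgLargeY$ and $a$ ranging over finitely many distinct elements of $\Y^{\#}$; by hypothesis none of the $g_a$ or $a$ contains $x$ or $x^T$. Let $X=\phi'_n(x)$ be the generic matrix attached to $x$, whose entries are polynomial variables not occurring in the entries of $\PsiLargeY{n}(f)$. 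Since $\PhiLargeY{n}$ sends $\si_1(ax)$ to $\tr(\phi'_n(a)X)$, linearity gives the key identity
\[
\PhiLargeY{n}(\tr(fx))=\tr\bigl(\PsiLargeY{n}(f)\cdot X\bigr)\quad\text{in } R.
\]

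Part~1 then follows immediately. Set $M=\PsiLargeY{n}(f)\in M_n(R)$; its entries lie in the subalgebra generated by the generic and transpose generic matrices other than $X$, hence are independent of the variables $x_{ij}$ of $X$. Thus $\tr(MX)=\sum_{i,j}M_{ij}\,x_{ji}$ is linear in the $x_{ij}$, and the coefficient of $x_{ji}$ is exactly $M_{ij}$; as $\FF$ is infinite these variables are algebraically free over the entries of $M$, so $\tr(MX)=0$ in $R$ if and only if $M=0$. Unwinding the definitions, $\tr(fx)\in\KLargeY{n}$ iff $\PhiLargeY{n}(\tr(fx))=\tr(MX)=0$ iff $M=\PsiLargeY{n}(f)=0$ iff $f\in\TLargeY{n}$.

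For Part~2 I would work inside $\AlgLargeY$ itself. The direction $f=0\Rightarrow\tr(fx)=0$ is trivial. For the converse, write $\tr(fx)=\sum_a g_a\,\si_1(ax)$. Each word $ax$ has $\deg_x(ax)=1$, hence is primitive, so $\si_1(ax)=\si_1(e_a)$ for the maximal $\sim$-representative $e_a\in\EY$; that is, each $\si_1(ax)$ is one of the free polynomial generators of $\AlgLargeY$. The assignment $a\mapsto e_a$ is injective: if $ax\sim bx$, then comparing $x$-degrees excludes the transpose alternative $ax\stackrel{c}{\sim}(bx)^T$ (it would force $\deg_x=\deg_{x^T}$, impossible since $1\neq0$), so $ax\stackrel{c}{\sim}bx$; and since $x$ occurs exactly once in $ax$, the only cyclic rotation of $ax$ ending in $x$ is $ax$ itself, whence $a=b$. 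Moreover each $g_a$ involves only generators $\si_t(b)$ with $\deg_x(b)=\deg_{x^T}(b)=0$, so it is independent of all the generators $\si_1(e_{a'})$. Consequently $\tr(fx)$ is a polynomial linear in the \emph{distinct} free generators $\si_1(e_a)$ with coefficients $g_a$ not involving them, so it vanishes if and only if every $g_a=0$, i.e. if and only if $f=0$.

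The genuinely routine parts are the displayed identity for $\PhiLargeY{n}(\tr(fx))$ and the generic-matrix cancellation in Part~1. The step demanding the most care is the combinatorial injectivity $a\mapsto e_a$ in Part~2: because $\AlgLargeY$ also quotients by the transpose relation $\si_t(a)=\si_t(a^T)$, not merely the cyclic one, one must verify that the involution cannot identify two distinct words of the shape $ax$, which is precisely where the asymmetry $\deg_x(ax)=1\neq0=\deg_{x^T}(ax)$ is essential.
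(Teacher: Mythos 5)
Your proof is correct and follows essentially the same route as the paper: part~1 via the genericity/nondegeneracy of the trace pairing applied to $\tr(\PsiLargeY{n}(f)X)$, and part~2 via the freeness of $\AlgLargeY$ on the generators $\si_t(b)$, $b\in\EY$, together with the injectivity of $a\mapsto[ax]$. Your write-up merely makes explicit the combinatorial step (excluding the transpose identification by comparing $\deg_x$ with $\deg_{x^T}$) that the paper's proof leaves implicit.
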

\begin{proof}
1) Note that $\tr(fx)\in\KLargeY{n}$ if and only if $\tr(\PsiLargeY{n}(f)X)=0$ for the generic $n\times n$ matrix $X$ corresponding to $x$. Since the trace bilinear form $\tr:M_n(\FF)\times M_n(\FF)\to \FF$ is nondegenerate, the last condition is equivalent to the fact that $\PsiLargeY{n}(f)=0$. The required is proven.
\smallskip

2) Let $f$ be given by formula~(\ref{eq5}). Since the equality $\al f_i\tr(a_i x) = f_j\tr(a_j x)$ in $\AlgLargeY\otimes \FF\Y^{\#}$ for some $\al\in\FF$ implies $\al=1$, $f_i=f_j$, and $a_i=a_j$, we obtain that the equality $f=0$ follows from 
$\tr(fx)=0$. 
\end{proof}

Analogues of formula~(\ref{eq_O_transpose}) hold for $\chi_{t,r}$ and $\zeta_{t,r}$.

%------L7.2--------------------------------------------------
\begin{lemma}\label{lemma_O_transpose}
For $x,a,b,c\in\Y$ we have 
\begin{enumerate}
\item[1)] $\si_{t,1;r;r}(a,x;b;c) = (-1)^{t}\tr(\chi_{t,r}(a,b,c)x)\;$ and $\;\si_{t;r,1;r+1}(a;b,x;c) = (-1)^{t} \tr(\zeta_{t,r}(a,b,c)x)$ in $\AlgLargeY$;

\item[2)] $\chi_{t,r}(a,b,c)^T=\chi_{t,r}(a^T,c,b)=\chi_{t,r}(a,b^T,c^T)^T$ in $\AlgLargeY\otimes \FF\Y^{\#}$;

\item[3)] $\zeta_{t,r}(a,b,c)^T=\zeta_{t,r}(a,b^T,c^T)$ in $\AlgLargeY\otimes \FF\Y^{\#}$.
\end{enumerate}
\end{lemma}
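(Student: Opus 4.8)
The plan is to establish part~1) first, by expanding the definitions of $\si_{t,1;r;r}$ and $\si_{t;r,1;r+1}$, and then to derive parts~2) and~3) purely formally from part~1), the symmetries~(\ref{eq_O_transpose}), and the nondegeneracy statement in Lemma~\ref{lemma_71}. Throughout I may assume $x,a,b,c$ are pairwise distinct letters: each asserted equality is between elements of $\AlgLargeY$ (or $\AlgLargeY\otimes\FF\Y^{\#}$) obtained by substitution into universal expressions, so validity for distinct letters implies validity after an arbitrary substitution.

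For part~1), recall from Remark~\ref{remark_O_OK1} that $\si_{t,1;r;r}(a,x;b;c)$ is the partial linearization of $\si_{t+1,r}$ selecting degree $1$ in $x$. Since $x$ (together with $x^{T}$) has total degree $1$ here, in every multiset $\omega$ contributing to this element exactly one primitive factor $e_{0}$ carries the sole occurrence of $x$, with multiplicity $1$, and all other factors are $x$-free. Using cyclic invariance of the trace --- and, where the occurrence sits at the second vertex, the identity $\tr(w)=\tr(w^{T})$ --- I can write $\tr(e_{0})=\tr(e\,x)$, where $e$ is obtained from $e_{0}$ by deleting $x$; since $e\,x$ has $x$-degree $1$ it is automatically primitive, so this deletion is a bijection onto the $x$-free closed paths at vertex $1$ of prescribed multidegree, which is exactly the index set $L_{i,j}$ of~(\ref{eq_O_chi}). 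The remaining $x$-free factors of $\omega$ collect into a product of the form $\si_{i,j}(a,b,c)$, and matching the two groupings term by term reproduces the definition~(\ref{eq_O_chi}) of $\chi_{t,r}$. The second formula is identical in spirit: there $x$ is a $2\to1$ arrow, so deleting it from a closed path leaves a path from vertex $1$ to vertex $2$, i.e. an element of $M_{i,j}$, and the comparison reproduces~(\ref{eq_O_zeta}) for $\zeta_{t,r}$. I expect the only real difficulty to be the sign bookkeeping --- reconciling the factor $(-1)^{|\un{t}|}$ and the weights $(-1)^{\xi}$ built into $\si_{\un{t};\un{r};\un{s}}$ with the factors $(-1)^{t}$ and $(-1)^{\xi}$ appearing in~(\ref{eq_O_chi}) and~(\ref{eq_O_zeta}); the base cases $\chi_{1,0}$ and $\zeta_{0,0}$ already confirm that these match exactly.

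Granting part~1), I derive part~2) as follows. From $\tr(M^{T}x)=\tr(Mx^{T})$ (by $\tr(w)=\tr(w^{T})$ and cyclicity) and part~1),
$$\tr(\chi_{t,r}(a,b,c)^{T}x)=\tr(\chi_{t,r}(a,b,c)x^{T})=(-1)^{t}\si_{t,1;r;r}(a,x^{T};b;c).$$
The third equality in~(\ref{eq_O_transpose}) gives $\si_{t,1;r;r}(a,x^{T};b;c)=\si_{t,1;r;r}(a^{T},x;c;b)$, which by part~1) equals $(-1)^{t}\tr(\chi_{t,r}(a^{T},c,b)x)$; hence $\tr(\chi_{t,r}(a,b,c)^{T}x)=\tr(\chi_{t,r}(a^{T},c,b)x)$ for every $x$, and part~2) of Lemma~\ref{lemma_71} yields $\chi_{t,r}(a,b,c)^{T}=\chi_{t,r}(a^{T},c,b)$. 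Similarly, the first equality in~(\ref{eq_O_transpose}) gives $\si_{t,1;r;r}(a,x;b;c)=\si_{t,1;r;r}(a,x;b^{T};c^{T})$, so the same comparison through part~1) produces $\chi_{t,r}(a,b,c)=\chi_{t,r}(a,b^{T},c^{T})$. Combining the two identities gives the full chain $\chi_{t,r}(a,b,c)^{T}=\chi_{t,r}(a^{T},c,b)=\chi_{t,r}(a,b^{T},c^{T})^{T}$.

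Part~3) follows by the same scheme applied to the second formula of part~1). From $\tr(\zeta_{t,r}(a,b,c)^{T}x)=\tr(\zeta_{t,r}(a,b,c)x^{T})=(-1)^{t}\si_{t;r,1;r+1}(a;b,x^{T};c)$, the first equality in~(\ref{eq_O_transpose}), which sends the pair $(b,x^{T})$ to $(b^{T},x)$ and $c$ to $c^{T}$, gives
$$\si_{t;r,1;r+1}(a;b,x^{T};c)=\si_{t;r,1;r+1}(a;b^{T},x;c^{T})=(-1)^{t}\tr(\zeta_{t,r}(a,b^{T},c^{T})x).$$
Thus $\tr(\zeta_{t,r}(a,b,c)^{T}x)=\tr(\zeta_{t,r}(a,b^{T},c^{T})x)$ for all $x$, and part~2) of Lemma~\ref{lemma_71} gives $\zeta_{t,r}(a,b,c)^{T}=\zeta_{t,r}(a,b^{T},c^{T})$, completing the proof.
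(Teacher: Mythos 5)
Your proposal is correct and follows essentially the same route as the paper: part~1) is obtained by unwinding the definitions of $\si_{t,1;r;r}$, $\si_{t;r,1;r+1}$, $\chi_{t,r}$, $\zeta_{t,r}$ (the paper simply states that this ``follows from the definitions''), and parts~2) and~3) are deduced exactly as in the paper from the symmetries~(\ref{eq_O_transpose}), part~1), and the nondegeneracy statement in part~2 of Lemma~\ref{lemma_71}. The only caveat is that your part~1) defers the general sign verification to the base cases, but this is no less complete than the paper's own treatment.
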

\begin{proof}
Part~1 follows from the definitions and parts~2,~3 are consequences of formula~(\ref{eq_O_transpose}), part~1 of the lemma, and part~2 of Lemma~\ref{lemma_71}.
\end{proof}

%------L7.3--------------------------------------------------
\begin{lemma}\label{lemma_73}
The ideal of relations $\TLargeY{n}$ for $\CY_n$ is generated by $\KLargeY{n}\otimes 1$ and 
\begin{enumerate}
\item[$\bullet$] $\chi_{t,r}(a,b,c)=0$ for $t+2r=n$; 
 
\item[$\bullet$] $\zeta_{t,r}(a,b,c)=0$ for $t+2r=n-1$; 
\end{enumerate}
where $a,b,c\in\FF\Y$.
\end{lemma}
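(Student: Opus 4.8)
The plan is to prove the two inclusions separately, using the outer-trace device of Lemma~\ref{lemma_71} to translate matrix-valued relations in $\TLargeY{n}$ into scalar relations in $\KLargeY{n}$, where generators are controlled by Theorem~\ref{theo_Lopatin}. Write $I$ for the ideal of $\AlgLargeY\otimes\FF\Y^{\#}$ generated by $\KLargeY{n}\otimes1$ and the listed $\chi_{t,r}$, $\zeta_{t,r}$. The inclusion $I\subseteq\TLargeY{n}$ is the easy one: $\KLargeY{n}\otimes1\subseteq\TLargeY{n}$ is immediate, and for the boundary Cayley--Hamilton polynomials I would invoke part~1 of Lemma~\ref{lemma_O_transpose}, namely $\tr(\chi_{t,r}(a,b,c)x)=(-1)^t\si_{t,1;r;r}(a,x;b;c)$ and $\tr(\zeta_{t,r}(a,b,c)x)=(-1)^t\si_{t;r,1;r+1}(a;b,x;c)$ for a fresh letter $x$. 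When $t+2r=n$ the first right-hand side has weight $(t+1)+2r=n+1$, and when $t+2r=n-1$ the second has weight $t+2(r+1)=n+1$; both exceed $n$, so by Theorem~\ref{theo_Lopatin} the traces lie in $\KLargeY{n}$, and part~1 of Lemma~\ref{lemma_71} then yields $\chi_{t,r}(a,b,c),\zeta_{t,r}(a,b,c)\in\TLargeY{n}$.

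For $\TLargeY{n}\subseteq I$, take a multihomogeneous $g\in\TLargeY{n}$ and a fresh letter $x$. By part~1 of Lemma~\ref{lemma_71}, $h:=\tr(gx)\in\KLargeY{n}$ is homogeneous of degree one in $x$ and zero in $x^T$. The map $\lambda\colon G\mapsto\tr(Gx)$ is a linear bijection from the $\{x,x^T\}$-free part of $\AlgLargeY\otimes\FF\Y^{\#}$ onto the subspace of $\AlgLargeY$ of elements of degree one in $x$ and zero in $x^T$: injectivity is part~2 of Lemma~\ref{lemma_71}, and surjectivity holds because, in the free generators, any such element has its unique $x$-carrying factor of the shape $\si_1(e)$ with $\deg_x(e)=1$, while $\si_1(e)=\tr(e_0x)$ after cyclically moving $x$ to the end. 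Now expand $h$ by Theorem~\ref{theo_Lopatin} as a sum of multiples $p\,\si_{t,r}(a,b,c)$ with $t+2r>n$ and retain its degree-one-in-$x$ component (permissible since $\FF$ is infinite). Each surviving summand is of one of two kinds. If $x$ occurs inside $\si_{t,r}(a,b,c)$, then partial linearization (Remark~\ref{remark_O_OK1}), part~1 of Lemma~\ref{lemma_O_transpose} and the symmetries~(\ref{eq_O_transpose}) rewrite it as $\tr(w\,x)$, where $w$ is a two-sided monomial multiple of some $\chi_{t',r'}(\cdots)$ with $t'+2r'\geq n$ or of some $\zeta_{t',r'}(\cdots)$ with $t'+2r'\geq n-1$. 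If instead $x$ occurs in the scalar factor $p$, the summand equals $w\,\tr(e_0x)=\lambda(w\,e_0)$ with $w\in\KLargeY{n}$ and $e_0\in\Y^{\#}$, whose $\lambda$-preimage $w\,e_0$ lies in the ideal generated by $\KLargeY{n}\otimes1$. Applying $\lambda^{-1}$ to $h$ then exhibits $g$ as a member of the ideal generated by $\KLargeY{n}\otimes1$ together with all $\chi_{t',r'}$ ($t'+2r'\geq n$) and $\zeta_{t',r'}$ ($t'+2r'\geq n-1$).

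It remains to reduce these higher-weight Cayley--Hamilton polynomials to the boundary ones, and I expect this to be the main obstacle. For $p=0$ it is vacuous: by Lemma~\ref{lemma_O_fb} only the weight-$(n+1)$ generators of $\KLargeY{n}$ are needed, and linearizing them produces only $\chi_{t,r}$ with $t+2r=n$ and $\zeta_{t,r}$ with $t+2r=n-1$. For $p>0$, however, $\KLargeY{n}$ is not finitely based and genuinely higher $\si_{t,r}$ enter, so one must show by induction on $t+2r$ that $\chi_{t,r}(a,b,c)\in I$ for $t+2r>n$ and $\zeta_{t,r}(a,b,c)\in I$ for $t+2r>n-1$. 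The model is the scalar recursion $\chi_{t+1}(a)=a\,\chi_t(a)+(-1)^{t+1}\si_{t+1}(a)$, whose error term lies in $\KLargeY{n}\otimes1$ once $t\geq n$; the real work is to derive the analogous recursions directly from the definitions~(\ref{eq_O_chi}) and~(\ref{eq_O_zeta}), where raising $t$ or $r$ yields, besides a monomial multiple of a lower $\chi_{t,r}$ or $\zeta_{t,r}$, correction terms that are products of some $\si_{t',r'}$ with $t'+2r'>n$ (hence in $\KLargeY{n}\otimes1$) with lower Cayley--Hamilton polynomials. Since the transpose symmetries in parts~2 and~3 of Lemma~\ref{lemma_O_transpose} intertwine the $\chi$- and $\zeta$-families, the two must be reduced simultaneously; managing this bookkeeping is the technical crux, after which the induction closes and $\TLargeY{n}\subseteq I$ follows, completing the proof.
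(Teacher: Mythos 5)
Your overall strategy --- transferring the problem to $\KLargeY{n}$ by the trace pairing of Lemma~\ref{lemma_71} and translating the resulting degree-one-in-$x$ generators back through part~1 of Lemma~\ref{lemma_O_transpose} --- is exactly the paper's, and your easy inclusion is correct. The genuine gap is in the hard inclusion: you expand $\tr(gx)$ using the infinite generating set of Theorem~\ref{theo_Lopatin}, which leaves you with $\chi_{t',r'}$ for all $t'+2r'\geq n$ and $\zeta_{t',r'}$ for all $t'+2r'\geq n-1$, and you then openly defer the reduction of these to the boundary weights to an unproved family of recursions ``derived directly from the definitions~(\ref{eq_O_chi}) and~(\ref{eq_O_zeta})'', calling the bookkeeping ``the technical crux''. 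That reduction is precisely the hard content of the statement, and as written your argument does not contain it; in positive characteristic it is not a routine induction, and nothing in your sketch guarantees that the correction terms in the hoped-for recursions land in the ideal you have built.

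The paper avoids this issue entirely by expanding $\tr(gx)$ with the refined generating set of Theorem~\ref{theo_O} and Remark~\ref{remark_theo_O} rather than with Theorem~\ref{theo_Lopatin}. Because $\tr(gx)$ has total degree one in $\{x,x^T\}$, the letter $x$ must occur linearly in a single argument of a generator $\si_{\un{t};\un{r};\un{s}}$, which forces the corresponding entry of the vector $(\un{t},\un{r},\un{s})$ to equal $1$; condition~(\ref{eq_cond3}) of Remark~\ref{remark_theo_GL_main}, imposed on that vector in Remark~\ref{remark_theo_O}, then excludes every weight except $n+1$, since in the range $n+1<|\un{t}|+2|\un{r}|\leq 2n$ no entry may equal $1$. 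Hence only $\si_{t,1;r;r}$ with $t+2r=n$ and $\si_{t;r,1;r+1}$, $\si_{t;r+1;r,1}$ with $t+2r=n-1$ survive, and these are exactly $\pm\tr(\chi_{t,r}(a,b,c)e_1xe_2)$ and $\pm\tr(\zeta_{t,r}(a,b,c)e_1xe_2)$ by Lemma~\ref{lemma_O_transpose} and formula~(\ref{eq_O_transpose}). In other words, the reduction you are missing was already carried out at the scalar level in Section~\ref{section6} (Lemmas~\ref{lemma_O_key1}--\ref{lemma_O_key4}); you should invoke Theorem~\ref{theo_O} together with Remark~\ref{remark_theo_O} at the expansion step instead of attempting a new matrix-level induction on $\chi_{t,r}$ and $\zeta_{t,r}$.
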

\begin{proof}
By part~1 of Lemma~\ref{lemma_71}, an element $f\in\AlgLargeY\otimes \FF\Y^{\#}$ belongs to $\TLargeY{n}$ if and only if $\tr(fx)\in\KLargeY{n}$ for such a letter $x$ that neither $x$ nor $x^T$ is not contained in $f$. Since $\deg_{x}\tr(fx)+\deg_{x^T}\tr(fx)=1$, Theorem~\ref{theo_O} together with Remarks~\ref{remark_theo_O},~\ref{remark_O_OK2} implies that the last condition holds if and only if $\tr(fx)$ belongs to the ideal of $\AlgLargeY$, generated by 
\begin{enumerate}
 \item[(a)] $\si_{t,1;r;r}(a,e;b;c)=0$ for $t+2r=n$;
 
 \item[(b)] $\si_{t;r,1;r+1}(a;b,e;c)=0$ and $\si_{t;r+1;r,1}(a;b;c,e)=0$ for $t+2r=n-1$;
 
 \item[(c)] $h\tr(e)$ for $h\in\KLargeY{n}$; 
\end{enumerate}
where $a,b,c\in\FF\Y$, $e=e_1 x^{\de} e_2$ for $e_1,e_2\in\Y^{\#}$ and $\de\in\{1,T\}$. By formula~(\ref{eq_O_transpose}), $\si_{t;r+1;r,1}(a;b;c,e)=\si_{t;r,1;r+1}(a^T;c^T,e^T;b^T)$ in $\AlgLargeY$. Thus, parts~1,~2,~3 of Lemma~\ref{lemma_O_transpose} imply that elements~(a) and~(b) of $\AlgLargeY$ coincide with elements
\begin{enumerate}
 \item[$\bullet$] $\pm\tr(\chi_{t,r}(a,b,c)e_1xe_2)=0$ for $t+2r=n$;
 
 \item[$\bullet$] $\pm\tr(\zeta_{t,r}(a,b,c)e_1xe_2)=0$ for $t+2r=n-1$; 
\end{enumerate}
where $a,b,c\in\FF\Y$ and $e_1,e_2\in\Y^{\#}$. Finally, part~2 of Lemma~\ref{lemma_71} completes the proof.
\end{proof} 

Now we can complete the proof of Theorem~\ref{theo_O_main}.
\begin{proof}
Applying Lemma~\ref{lemma_73}, we prove part~1 of Theorem~\ref{theo_O_main} exactly in the same way as we proved part~1 of Theorem~\ref{theo_O_main} at the end of Section~\ref{section4}. Using Lemma~\ref{lemma_O_Lopatin}, Theorem~\ref{theo_O}, Remark~\ref{remark_theo_O} instead of Lemma~\ref{lemma_GL_Donkin}, Theorem~\ref{theo_GL}, Remark~\ref{remark_theo_GL}, respectively, we repeat the reasoning from Section~\ref{section4} to prove part~2 of Theorem~\ref{theo_O_main}.
\end{proof}

%=====================================================================
%=====================================================================

\section*{Acknowledgements}
This paper was written during author's visit to Bielefeld University, sponsored by CRC 701 ``Spectral Structures and Topological Methods in Mathematics''. The author is grateful for this support. The author is also grateful to Professor Claus Michael Ringel for hospitality. This paper has also been partially supported by grants of Ministry of Education and Science of Russia \textnumero 14.B37.21.0359 and \textnumero 0859.

\end{document}